\documentclass[10pt]{article}

\usepackage{amssymb,amsmath,amsthm,amsfonts}
\usepackage{graphicx}
\usepackage{comment}
\usepackage{algorithm,algorithmic}
\graphicspath{{./pics/}}
\usepackage{epstopdf}
\usepackage{verbatim}
\usepackage{bookmark}
\usepackage{indentfirst}

%%%%%%%%%%%%%%%%%%%%%%%%%%%%
%\usepackage[notref,notcite]{showkeys}
%%%%%%%%%%%%%%%%%%%%%%%%%%%%
\numberwithin{equation}{section}
\theoremstyle{plain}
%[section]
%[section]
\newtheorem{proposition}{Proposition}%[section]
%[section]
%[section]

\theoremstyle{remark}
\newtheorem{remark}{Remark}%[section]
\theoremstyle{definition}
%[section]
\newtheorem{exam}{Example}%[section]

\usepackage{xcolor}

\renewcommand{\epsilon}{\varepsilon}
\DeclareMathOperator*{\argmax}{arg\,max}
\usepackage{subfig}

\setlength\topmargin{-1cm} \setlength\textheight{220mm}
\setlength\oddsidemargin{0mm}
\setlength\evensidemargin\oddsidemargin \setlength\textwidth{160mm}
\setlength\baselineskip{18pt}

\title{The Linearized Inverse Problem in Multifrequency Electrical Impedance Tomography}
\author{Giovanni S.\ Alberti\thanks{Department of Mathematics,
ETH Z\"{u}rich, R\"{a}mistrasse 101, 8092 Z\"{u}rich, Switzerland. (\texttt{giovanni.alberti@sam.math.ethz.ch},
\texttt{habib.ammari@math.ethz.ch})}
\and Habib Ammari\footnotemark[1]
 \and Bangti Jin\thanks{Department of Computer Science, University College London,
Gower Street, London WC1E 6BT, UK. (\texttt{bangti.jin@gmail.com,b.jin@ucl.ac.uk})}
\and Jin-Keun Seo\thanks{Department of Computational Science and Engineering, Yonsei University, 50 Yonsei-Ro,
Seodaemun-Gu, Seoul 120-749, Korea. (\texttt{seoj@yonsei.ac.kr})}
\and Wenlong Zhang\thanks{Department of Mathematics and Applications, \'Ecole Normale Sup\'{e}rieure, 45 Rue d'Ulm, 75005
Paris, France. (\texttt{wenlong.zhang@ens.fr})}
}

\begin{document}
\maketitle

\begin{abstract}
This paper provides an analysis of the linearized inverse problem in multifrequency electrical
impedance tomography. We consider an isotropic conductivity distribution with a finite number of unknown
inclusions with different frequency dependence, as is often seen in biological tissues. We discuss reconstruction
methods for both fully known and partially known spectral profiles, and demonstrate in the latter case the
successful employment of difference imaging. We also study the reconstruction with an imperfectly known boundary,
and show that the multifrequency approach can eliminate modeling errors and recover almost all inclusions.
In addition, we develop an efficient group sparse recovery algorithm for the robust solution of related linear
inverse problems. Several numerical simulations are presented to illustrate and validate the approach.\\
\textbf{Keywords}: multifrequency electrical impedance tomography, linearized inverse problem, reconstruction, imperfectly known boundary, group sparsity, regularization
\end{abstract}

%\begin{AMS}
%35R30, 49N45, 65N21, 92C55, 65F22
%\end{AMS}

\section{Introduction}\label{sect:intro}

Electrical impedance tomography (EIT) is a diffusive imaging modality that allows recovering the conductivity
 of an electrically conducting object by using electrodes to measure the resulting voltage
on its boundary, induced by multiple known injected currents. It is safe, cheap and portable, and is
potentially applicable to clinical imaging in a range of areas. However, the EIT
inverse problem is severely ill-posed, and has thus shown only modest image quality when compared with
other modalities \cite{borcea}. This has motivated numerous mathematical studies on EIT imaging
techniques including small anomaly conductivity imaging \cite{anomaly1, anomaly3, anomaly2, anomaly4}
and hybrid conductivity imaging \cite{hybrid1, hybrid2,hybrid3,hybrid4,hybrid5,hybrid6,hybrid8}.

Static imaging aims at recovering absolute conductivity values. Apart from the popular linearization approach, a
number of static imaging algorithms have been developed, e.g., least-squares method \cite{RondiSantosa:2001,ChungChan:2005,LechleiterRieder:2006,JinKhanMaass:2012,JinMaass:2012}, direct methods \cite{SiltanenMueller:2000,ChowItoZou:2014,Lechleiter:2015}, and statistical methods \cite{KaipioKolehmainen:2000,
GehreJin:2014}; see also the overviews \cite{borcea,Lionheart:2004}.
However, static imaging has so far achieved only limited success in practice, since
electrode voltages are insensitive  to localized conductivity changes, but sensitive to forward  modeling
errors, e.g., boundary shape and electrode positions. Hence, apart from accurate data, a very accurate forward
model is  required for its success; however, this is often difficult to obtain in practice. A prominent idea is
to use difference imaging, in the hope of canceling out modeling errors due to, e.g., boundary shape. A
traditional approach is time difference imaging, which produces  an image of the conductivity change  by inverting
a linearized sensitivity model. A second approach is multifrequency EIT (mfEIT), which has also attracted  attention in recent years.

Imaging by mfEIT  exploits the frequency dependence of the conductivity. Experimental
research has found that the conductivity of many biological tissues varies strongly with
the frequency \cite{GeddesBaker:1967,GabrielyLauGabriel:1996,LauferIvorra:2010}. In \cite{laure},
the authors analytically exhibited fundamental mechanisms underlying the fact that effective
electrical properties of biological tissue and their frequency dependence reflect the tissue
composition and physiology, and  a homogenization theory was developed.
In mfEIT, boundary voltages are recorded simultaneously, while varying the modulation
frequency of the injected current. It is expected to be especially useful for the diagnostic imaging
of conditions such as acute stroke, brain injury, and breast cancer, because patients are admitted
into care after the onset of the pathology and thus lack a baseline record for healthy tissue, whence
time difference imaging may not be used.

There have been several studies on frequency-difference imaging \cite{GriffithsAhmed:1987,SchlappaAnnese:2000,Yerworth:2003}. An mfEIT experimental design for head imaging was given in \cite{Yerworth:2003}. In these works, the simple frequency difference (between two neighboring frequencies) was often employed. Seo et al.\ \cite{SeoLeeKim:2008} proposed a weighted frequency difference imaging technique, based on a suitable weighted voltage difference between any \textit{two} sets of data. It was numerically shown that the approach can accommodate geometrical errors, including imperfectly known boundary. This approach can improve the imaging quality when the background is frequency dependent. Recently, Malone et al.\ \cite{MaloneSato:2014,MaloneSato:2015} proposed a nonlinear reconstruction scheme, which uses all multifrequency data directly to recover the volume fractions of the tissues, and validated the approach on phantom experimental data. Harrach and Seo \cite{HarrachSeo:2009}
developed a direct method for detecting inclusions from frequency-difference data. See also \cite{KimTamasan:2014} for a recovery algorithm at low frequencies.

This work analyzes mfEIT in the linearized regime, by linearizing the forward model around a constant
conductivity, as customarily adopted in practice. We shall discuss both the mathematically convenient continuum model and the practically popular complete electrode model. Our main contributions are as follows. First, we discuss mfEIT imaging for spectral profiles that are  known, or partially known, or unknown, and also  generalize existing studies, especially \cite{SeoLeeKim:2008}. Second, we rigorously justify  mfEIT for handling geometrical errors. Third, we present a novel group sparse reconstruction algorithm of iterative shrinkage type, which is easily implemented
and converges quickly. Extensive numerical experiments  confirm our discussions. All these findings shed new valuable insights into mfEIT, which are expected to be of great interest to the engineering community.

This paper is organized as follows. In  Section~\ref{sect:continuum}, we mathematically formulate
mfEIT using a continuum model, and analyze three important scenarios, depending on the
knowledge of the spectral profiles. Then, in  Section~\ref{sec:unknownboundary}, we illustrate the
potential of mfEIT in handling the modeling errors due to an imperfectly known boundary shape.
These analyses are then extended to the complete electrode model in  Section~\ref{sec:cem}.
In  Section~\ref{sect:sparsity}, we present a novel group sparse reconstruction algorithm.
In  Section~\ref{sec:numer}, extensive numerical experiments are presented to illustrate
the approach. Finally, some concluding remarks are discussed in Section~\ref{sec:conclusion}.

\section{The Continuum Model}\label{sect:continuum}
In this section, we mathematically formulate mfEIT in the  continuum model with
a known boundary.
Let $\Omega\subset\mathbb{R}^d$ ($d=2,3$) be a bounded domain with a smooth boundary $\partial\Omega$.
The forward problem reads: for an input current $f\in L^2_\diamond(\partial\Omega):=\{ g\in L^2(\partial\Omega):
\int_{\partial\Omega}g\,ds=0 \}$ and $\sigma(x,\omega)$, find $u(\cdot,\omega)\in H_\diamond(\Omega):=
\{ v\in H^1(\Omega): \int_{\partial\Omega}vds=0 \}$:
\begin{equation}\label{eqn:eit}
  \left\{\begin{aligned}
    -\nabla\cdot(\sigma(x,\omega)\nabla u(x,\omega)) & = 0\quad \mbox{ in }\Omega,\\
    \sigma(x,\omega)\frac{\partial u}{\partial\nu}& = f(x)\quad \mbox{on }\partial\Omega,
    %\int_{\partial\Omega}u(x,\omega)\,ds& =0,
  \end{aligned}\right.
\end{equation}
where $\omega$ is the frequency and $\nu$ is the unit outward normal vector to $\partial\Omega$. The weak formulation of problem \eqref{eqn:eit} is to  find
$u=u(\cdot,\omega)\in H^1_\diamond(\Omega)$ such that
\begin{equation*}
  \int_\Omega \sigma\nabla u\cdot\nabla v\,dx = \int_{\partial\Omega}fv\,ds,\qquad  v\in H^1(\Omega).
\end{equation*}

Throughout, we assume that the conductivity $\sigma(x,\omega)$ takes a separable form
\begin{equation}\label{eqn:gamma}
  \sigma(x,\omega) = \sum_{k=0}^K\sigma_k(x)s_k(\omega),
\end{equation}
where $K+1$ is the number of spectral profiles,  $\{s_k(\omega)\}_{k=0}^K$ are the (possibly only
partially known) material spectra, a.k.a. endmembers, and $\{\sigma_k(x)\}_{k=0}^K$ are scalar functions
representing the corresponding proportions, a.k.a. abundances in the
hyperspectral unmixing literature \cite{KeshavaMustard:2002}. Further, we shall assume
\begin{equation*}
  \begin{aligned}
    \sigma_0(x) &= 1 + \delta\sigma_0(x), \\
    \sigma_k(x) &= \delta\sigma_k(x),\quad k=1,\ldots,K,
  \end{aligned}
\end{equation*}
where the $\delta\sigma_k$s, i.e., $\{\delta\sigma_k\}_{k=0}^K$, are small (in suitable $L^p(\Omega)$
norms) so that a linearized model is valid. The $\delta\sigma_k$s, including the background $\delta
\sigma_0$, are all unknown, represent the small inclusions/anomalies in the object $\Omega$, and have compact
spatial supports  that are disjoint from each other.  We also assume that the background frequency
$s_0(\omega)$ is known.

Now we apply $N$ linearly independent input currents $\{f_n\}_{n=1}^N\subset L_\diamond^2(\partial\Omega)$. Let $\{u_n\equiv
u_n(x,\omega)\}_{n=1}^N\subset H_\diamond^1(\Omega)$ be the corresponding solutions to \eqref{eqn:eit}, i.e.,
\begin{equation}\label{eqn:us}
\int_\Omega \sigma \nabla u_n \cdot\nabla v \,dx = \int_{\partial\Omega}f_n v \,ds,\qquad  v\in H^1(\Omega).
\end{equation}
The inverse problem is to recover $\delta\sigma_k$s
from  $\{u_n(x,\omega)\}_{n=1}^N$ on $\partial\Omega$ at the frequencies $\{\omega_q\}_{q=1}^Q$.

Next we derive the linearized model for the inverse problem, based on an integral representation. Let
$v_m\in H^1_\diamond(\Omega)$ be the potential corresponding to the
unperturbed conductivity $\sigma_0(x,\omega)\equiv s_0(\omega)$ with the input current $f_m\in L^2_\diamond(\partial\Omega)$, namely
\begin{equation}\label{eqn:vs}
\int_\Omega \sigma_0\nabla v_m \cdot\nabla v\,dx = \int_{\partial\Omega}f_mv\,ds,\qquad  v\in H^1(\Omega).
\end{equation}
Then $v_m = v^*_m / s_0(\omega)$, where $v^*_m$ is the solution of  \eqref{eqn:vs} corresponding to the case $s_0 \equiv 1$. Namely, the dependence of $v_m$ on the frequency $\omega$ is explicit. Using \eqref{eqn:us} and  \eqref{eqn:vs}, we obtain
\begin{equation*}
   \sum_{k=0}^Ks_k(\omega) \int_\Omega \delta\sigma_k\nabla u_n\cdot\nabla v_m\,dx =  \int_{\partial\Omega}( f_nv_m-f_mu_n)\,ds.
\end{equation*}
Hence, using the approximation $\nabla u_n(x,\omega)\approx \nabla v_n(x,\omega)$ in $\Omega$
(valid in the linear regime), and the identity $v_m = v^*_m / s_0(\omega)$, we arrive at a linearized model:
\begin{equation}\label{eqn:integral}
  \sum_{k=0}^K s_k(\omega)\int_\Omega \delta\sigma_k\nabla v_n^*\cdot \nabla v_m^*\,dx =s_0(\omega)^2\int_{\partial\Omega}(f_nv_m-f_mu_n)\,ds.
\end{equation}
The right hand side of \eqref{eqn:integral} can be treated as a known quantity: $u_m$ is the measured voltage data
(and thus depends on $\omega$), and $v_m$ is computable.
Next, we triangulate $\Omega$ into  a shape-regular quasi-uniform mesh
$\{\Omega_l\}_{l=1}^L$, and consider a piecewise constant approximation of $\delta\sigma_k$s:
\begin{equation}\label{eqn:piece-const}
  \delta\sigma_k(x) \approx \sum_{l=1}^L (\delta\sigma_k)_l\chi_{\Omega_l}(x),\quad k=0,1,\ldots,K,
\end{equation}
where  $\chi_{\Omega_l}$ is the characteristic function of the $l$th element $\Omega_l$, and $(\delta\sigma_k)_l$
denotes the corresponding value of $\delta\sigma_k$.
Thus we get a finite-dimensional linear inverse problem
\begin{equation*}
  \sum_{k=0}^K s_k(\omega)\sum_{l=1}^L(\delta\sigma_k)_l\int_{\Omega_l} \nabla v^*_n\cdot \nabla v^*_m\,dx = s_0(\omega)^2 \int_{\partial\Omega}(f_nv_m-f_mu_n)\,ds.
\end{equation*}

Throughout, we shall focus on the \textit{finite-dimensional} linear inverse problem,
where the discretization is always assumed to be adequate. We refer interested readers to \cite{PoschlResmeritaScherzer:2010}
for discussions on the interplay between regularization, discretization and noise level.

Last, we introduce the sensitivity matrix $M$ and the data vector $X$.
We use a single index $j=1,\ldots,J$ with  $J=N^2$ for the index pair $(m,n)$
with $j=N(m-1)+ n$, and introduce the frequency-independent sensitivity matrix $M=[M_{jl}]
\in\mathbb{R}^{J\times L}$ with its entries $M_{jl}$ given by
\begin{equation*}
  M_{jl}=\int_{\Omega_l} \nabla v^*_n\cdot\nabla v^*_m \,dx \quad (j\leftrightarrow (m,n)).
\end{equation*}
Likewise, we introduce a vector $X(\omega)\in\mathbb{R}^J$ with its $j$th entry $X_j(\omega)$ given by
\begin{equation*}
  X_j(\omega) = s_0(\omega)^2 \int_{\partial\Omega}(f_nv_m(\omega)-f_mu_n(\omega))\,ds \quad (j\leftrightarrow (m,n)).
\end{equation*}
By writing $A_k = (\delta\sigma_k)_l\in\mathbb{R}^L$, $k=0,\ldots,K$, we obtain a linear system (parameterized by $\omega$)
\begin{equation}\label{eqn:lin-inverse}
  M \sum_{k=0}^Ks_k(\omega) A_k  = X(\omega).
\end{equation}

\begin{remark}
In \eqref{eqn:lin-inverse}, the sensitivity matrix $M$ is identical with that in
static imaging, and hence mfEIT does not lead to improved resolution.
Namely, in mfEIT the diffusive nature of the  modality does not change with
the frequency $\omega$. But as we shall see below,
in the presence of spectral contrast, mfEIT does allow recovering
$\{A_k\}_{k=0}^K$ and removing modeling errors.
\end{remark}

In the mfEIT, $A_k$s are of primary interest.
Depending on the a priori spectral knowledge, we
discuss the following three cases separately: (a) All  $s_k$s are known;
(b) $s_k$s may not be fully known, but with substantially different frequency dependence;
(c) $s_k$s are only partially known, and we aim at a partial recovery of $A_k$s.
They are of different degree of challenge.

\subsection{Case (a): Known Spectral Profiles}\label{sub:case(a)}

First we consider the case when $s_k$s are all known.
In some applications, this is feasible, since the spectral profiles
of many materials can be measured (see e.g.\ \cite{GabrielPeyman:2009} for
tissues). Suppose that we can measure
$X(\omega)$ at $Q$ distinct frequencies $\{\omega_q\}_{q=1}^Q$. By writing
$S=(S_{kq})\in\mathbb{R}^{(K+1)\times Q}$, with $S_{kq}=s_k(\omega_q)$, we get from
\eqref{eqn:lin-inverse}
\begin{equation}\label{eqn:lin-inv-knownprofile}
  MAS = X,
\end{equation}
where the matrix $X=[X(\omega_1)\ \ldots \ X(\omega_Q)]\in\mathbb{R}^{J\times Q}$. In equation
\eqref{eqn:lin-inv-knownprofile}, the matrix $M$ can be precomputed, and the matrix
$S$ and the data $X$ are known: only $A=[A_0\ \ldots\ A_K]\in \mathbb{R}^{L\times(K+1)}$
is unknown. It is natural to assume that a sufficient number of frequencies are taken so that
$S$ is incoherent, namely $Q\geq K+1$ and $\mathrm{rank}(S)=K+1$ (and presumably $S$
is also well-conditioned). Then $S$ admits a right inverse $S^{-1}$. By letting $Y=XS^{-1}$ we obtain
\begin{equation*}
  MA = Y.
\end{equation*}
These are $K+1$ decoupled linear system. By letting
$Y=[Y_0\ \ldots \ Y_K]\in\mathbb{R}^{J\times(K+1)}$, we have
\begin{equation}\label{eqn:Y}
  MA_k=Y_k,\quad k = 0,\ldots,K,
\end{equation}
where $A_k$ represents the $k$th abundance. Here each linear system determines one
and only one abundance $A_k$. The stable and accurate solution of \eqref{eqn:Y}
will be discussed in  Section~\ref{sect:sparsity}.

The condition $\mathrm{rank}(S)=K+1$ is necessary and
sufficient for a full decoupling, and the well-conditioning of $S$
ensures a stable decoupling. It specifies the condition under which the abundance unmixing is practically feasible, and
also the proper selection of $\{\omega_q\}_{q=1}^Q$ such that $\mathrm{rank}(S)=K+1$.
It depends essentially on the incoherence of
$\{s_k(\omega)\}_{k=0}^K$, without which a full decoupling
is impossible. For example, consider the simple case of two endmembers, with $s_0(\omega)=1+\omega$, $s_1(\omega)=2+2\omega$.
Then for any $Q>1$, $S$ is always of rank one.

The right inverse $Y=XS^{-1}$ can also be viewed as a least-squares
procedure
\begin{equation*}
  \min_{Y\in\mathbb{R}^{J\times (K+1)}}\|X-YS\|_F.
\end{equation*}
Thus, for a rank-deficient $S$, our approach
yields the minimum-norm matrix $Y$ compatible with the data, and for an inconsistent $S$,
it yields a best approximation via projection. By the
perturbation theory for least-squares problems \cite{Grcar:2010}, the well-conditioning of $S$
implies that the procedure is stable with respect to small perturbations in the spectral profiles.

This approach generalizes the weighted frequency difference EIT (fdEIT) method
 \cite{SeoLeeKim:2008}.
\begin{exam}\label{ex:fdEIT}
Consider the case with $K=1$ and $Q=2$, i.e., two frequencies. We write
\begin{equation*}
  X = [X(\omega_1) \ X(\omega_2)] \quad \mbox{and}\quad S = \left[\begin{array}{cc}
    s_0(\omega_1) & s_0(\omega_2)\\ s_1(\omega_1) & s_1(\omega_2)
  \end{array}\right].
\end{equation*}
Therefore, if $S$ is invertible, we obtain
\begin{equation*}
  Y = XS^{-1} = \frac{s_0(\omega_1)}{\det S}\left[\frac{s_1(\omega_2)}{s_0(\omega_1)}X(\omega_1)
  -\frac{s_1(\omega_1)}{s_0(\omega_1)}X(\omega_2)\quad X(\omega_2)-\frac{s_0(\omega_2)}{s_0(\omega_1)}X(\omega_1)\right].
\end{equation*}
The second column of $Y$ recovers the weighted fdEIT method \cite{SeoLeeKim:2008}.
Thus our method generalizes \cite{SeoLeeKim:2008}.
Our approach directly incorporates multifrequency data, which
improves the numerical stability, especially in the presence of strong correlation between neighboring frequencies and
imprecisely known spectral profiles. Further, it
enables  decoupling multiple inclusions.
In the special case $s_0(\omega_1)=
s_0(\omega_2)$, it recovers the usual frequency difference. This delineates the region of validity of
frequency difference for multifrequency data.
\end{exam}

\begin{remark}
The minimal number $Q$ of frequencies is equal to $K+1$, provided that with $\{\omega_q\}_{q=1}^Q$,
$S$ is sufficiently incoherent, i.e.\ $\mathrm{rank}(S)=K+1$. With an inadvertently poor choice of
$\{\omega_q\}_{q=1}^Q$, it may require more  than $K+1$  frequencies to achieve the desired incoherence.
\end{remark}

\subsection{Case (b): Spectral Profiles with Substantially Different Frequency Dependence}\label{sub:case(b)}

Next we consider the case when some of (or, possibly, all) $s_k(\omega)$s are not known, but
do not change rapidly with $\omega$, when compared to the remaining ones.  Thus,
instead of using $X(\omega)$ directly, it is natural to differentiate \eqref{eqn:lin-inverse}
with respect to $\omega$ to eliminate the contributions from those $s_k(\omega)$s that do not vary
much with $\omega$. This discriminating effect is useful in practice. For example,
the conductivity of malign tissues is more sensitive with respect to frequency variations in
a certain frequency range \cite{1988-surowiec,LauferIvorra:2010}, even though that of healthy tissues in the
background may exhibit fairly complex structure.

More precisely, let $\mathcal{P}\subseteq \{0,1,\dots,K\}$ be such that
\begin{equation}\label{eqn:ass-freq}
\left|s_p^\prime(\omega_q)\right|\gg \left|s_k^\prime(\omega_q)\right|, \qquad p\in \mathcal{P},k\in \{0,1,\dots,K\}\setminus \mathcal{P}.
\end{equation}

By differentiating \eqref{eqn:lin-inverse} with respect to $\omega$ and
invoking the assumption \eqref{eqn:ass-freq}, we obtain
\begin{equation}\label{eq:caseb}
  M\sum_{p\in \mathcal{P}} A_ps_p^\prime(\omega) \approx X^\prime(\omega).
\end{equation}
Thus the contributions from the remaining profiles are negligible.
Different reconstruction schemes should be used depending on whether the spectral profiles $\{s_p(\omega)\}_{p\in \mathcal{P}}$ are known.

\subsubsection{Case (b1): The Spectral Profiles $\{s_p(\omega)\}_{p\in \mathcal{P}}$ are not Known}\label{subsub:b1}

In the case when the spectral profiles $\{s_p(\omega)\}_{p\in \mathcal{P}}$ are not known,
\eqref{eq:caseb} cannot be simplified further. By solving \eqref{eq:caseb}, we can
recover at most $\sum_{p\in \mathcal{P}} s_p'(\omega) A_p$, namely a linear combination of the inclusions.
Since the weights $\{s_p'(\omega)\}_{p\in\mathcal{P}}$ are unknown, it is impossible to separate $\{A_p,p\in\mathcal{P}\}$. However,
 when $\mathcal{P} = \{p\}$  (i.e., $|\mathcal{P}|=1$), $\delta\sigma_p$
may be recovered up to a multiplicative constant, which gives the support information.
We illustrate the technique with an example.
\begin{exam}
Consider the case $K=1$, and two linear  frequency profiles, i.e.,
$s_0(\omega) = \alpha_0 + \beta_0 \omega$ and $s_1(\omega) = \alpha_1 + \beta_1 \omega$,
with $\beta_0\ll\beta_1$. Then the differentiation imaging amounts to
\begin{equation*}
  \beta_0 MA_0 + \beta_1 MA_1 = X^\prime(\omega).
\end{equation*}
If $MA_0$ and $MA_1$ are comparable, then $\beta_0\ll\beta_1$ implies
that the contribution of $\beta_0MA_0$  to the data is negligible.
Hence, the technique allows to recover the component $\beta_1MA_1$, which
upon linear inversion yields $\beta_1A_1$. In particular, it gives the support
$\mathrm{supp}(A_1)$, and also its magnitude up to a multiplicative constant. Further, for
known $\beta_1$, it allows recovering $A_1$.
\end{exam}

\subsubsection{Case (b2): The Spectral Profiles $\{s_p(\omega)\}_{p\in \mathcal{P}}$ are Known}\label{subsub:b2}

If the spectral profiles $\{s_p(\omega)\}_{p\in \mathcal{P}}$ are known, it is possible to perform
the same analysis of Case (a) to  \eqref{eq:caseb}. Taking
measurements at $Q$ distinct frequencies $\omega_1,\dots,\omega_Q$, we have
\begin{equation*}
  M\sum_{p\in\mathcal{P}} A_p s_p'(\omega_q) \approx X'(\omega_q),\qquad q=1,\dots,Q.
\end{equation*}
Then, with
$\widetilde S=(\widetilde S_{pq})\in\mathbb{R}^{|\mathcal{P}| \times Q}$, $\widetilde S_{pq}=
s_p'(\omega_q)$, $ X'=[X'(\omega_1)\ \ldots \ X'(\omega_Q)]\in\mathbb{R}^{J\times Q}$, we get
$MA\widetilde S =  X'$. Then the inversion
step is completely analogous to that in  Section~\ref{sub:case(a)}, if
$\mathrm{rank} \,\widetilde S = |\mathcal{P}|$ (and well-conditioning). All the
inclusions $A_p$, $p\in \mathcal{P}$, can be recovered.

\subsubsection{Numerical Implementation}

In the implementation, we take
\begin{equation}\label{eqn:freqdiff}
  M\sum_{k=0}^KA_k\frac{s_k(\omega_{q+1})-s_k(\omega_q)}{\omega_{q+1}-\omega_q}=\frac{X(\omega_{q+1})-X(\omega_q)}{\omega_{q+1}-\omega_q}.
\end{equation}
It approximates the derivative $s_k^\prime(\omega_q)$ with the forward difference
$
  s_k^\prime(\omega_q) \approx (s_k(\omega_{q+1})-s_k(\omega_q))/(\omega_{q+1}-\omega_q).
$
One can also use higher-order difference formulas, and they represent different ways to
perform difference imaging. Their robustness with respect to noise
might differ due to the ill-posed nature of numerical differentiation.
In this work, we shall use  \eqref{eqn:freqdiff}.

\subsection{Case (c): Partially Known Spectral Profiles, Partial Recovery of the Abundances}\label{sub:case(c)}

In practice, it is also of interest to recover some information about $\{A_k\}$ when $\{s_k(\omega)\}$
are only partially known. Generally, this is infeasible. But, one can still obtain some information under
certain a priori knowledge. To this end, recall the notation $Y_k=MA_k$, cf. \eqref{eqn:Y}. Then
\begin{equation}\label{eqn:YQ}
  Y_0 s_0(\omega_q) + \ldots + Y_K s_K(\omega_q) = X(\omega_q),\qquad q=1,\dots,Q.
\end{equation}
Now suppose the frequency dependence of $\{s_k(\omega)\}_{k=0}^K$ are of polynomial
type, namely $s_k(\omega) = \sum_{n=0}^N\alpha_k^n\omega^n.$
Inserting this expression into \eqref{eqn:YQ} yields
$
  \sum_{n=0}^N\sum_{k=0}^K(\alpha_k^n Y_k)\omega^n = X(\omega).
$
By taking a sufficiently large number of modulating frequencies $\{\omega_q\}_{q=1}^Q$ (to
be more precise, $Q\geq N+1$), and using the
identity principle for polynomials, we can compute
$B_n:= \sum_{k=0}^K \alpha_k^n Y_k$, $n=0,\ldots, N.$
Note that adding more frequencies does not tell more about $Y_k$ and $\alpha_k^j$ than
$\{B_n\}_{n=0}^N$. Namely, $\{B_n\}_{n=0}^N$ contain the essential information in
$\{X(\omega_q)\}_{q=1}^Q$. Depending on $K$, $N$ and further a prior knowledge,
some $Y_k$ can be recovered without knowing the corresponding spectral profiles.
Instead of providing a  general analysis of all possible cases, we present two examples that
explain the different situations that may appear.
\begin{exam}\label{exam30}
Consider the case $K=1$. For every $n$ we have $B_0 = \alpha_0^0 Y_0 + \alpha^0_1 Y_1$ and $B_n = \alpha_0^n Y_0 + \alpha^n_1 Y_1$, whence
$
Y_1 = (\alpha^0_0\alpha_1^n - \alpha^0_1\alpha^n_0)^{-1}(\alpha^0_0 B_n - \alpha^n_0 B_0).
$
Since $s_0$ is known, so are $\alpha^0_0$
and $ \alpha^n_0$. Hence, $Y_1$  may be recovered up to a multiplicative constant $c$, if
$\alpha^0_0\alpha_1^n - \alpha^0_1\alpha^n_0\neq 0$, without
assuming any knowledge of $s_1(\omega)$. This condition
simply represents the incoherence between $s_0$ and $s_1$. Finally, by solving
$MA_1 = cY_1$, $\delta\sigma_1$ can be recovered up
to some constant.

Further, assuming a unique recovery of the linearized inverse problem, the
knowledge of $B_0$ allows recovering an unknown
linear combination of $A_0$ and $A_1$, especially the union of their supports.
Since the supports of $A_0$ and $A_1$ are assumed to be disjoint from
each other, this  allows recovering $\mathrm{supp}(A_0)$,
given that $\mathrm{supp}(A_1)$ has already been recovered.
\end{exam}

\begin{exam}
Note that if $K=2$ and $N=1$, we get only
\begin{equation*}
  \alpha_0^0 Y_0 + \alpha_1^0 Y_1 +\alpha_2^0 Y_2 = B_0 \quad\mbox{and}\quad
  \alpha_0^1Y_0 + \alpha_1^1Y_1 + \alpha_2^1Y_2 = B_1,
\end{equation*}
which is vastly insufficient to determine all the unknowns. However, a calculation similar to
Example \ref{exam30} shows that the support of $Y_2$ can be
determined if $K=N=2$ and $s_1$ is known, if a
certain nonzero condition is satisfied. Like before, by solving the underdetermined system
$MA_2 = cY_2$, we can recover the support of $\delta\sigma_2$. Further, assuming a unique recovery with the
linearized inverse problem, $\mathrm{supp}(\delta\sigma_0)$ and $\mathrm{supp}(\delta\sigma_1)$ may be
determined.
\end{exam}

With obvious modifications, the preceding discussion is also valid for more
general basis functions $\phi_n(\omega)$ which form a unisolvent system on the set  $\{\omega_q\}_{q=1}^Q$ \cite[pp. 31--32]{Davis:1963}.

\section{Imperfectly Known Boundary}\label{sec:unknownboundary}

Now we illustrate the potentials of mfEIT for
handling modeling errors, e.g., an imperfectly known boundary,
which has long been one of the main
obstacles in practice \cite{AdlerGuardo:1996,KolehmainenLassas:2005,
KolehmainenLassasOla:2008}. The use of a slightly incorrect
boundary can lead to large reconstruction errors, and mfEIT is one strategy
to overcome the challenge \cite{SeoLeeKim:2008}.
Here we present an analysis of the approach in the linearized regime to justify these numerical findings.

We denote the true but unknown physical domain by $\widetilde \Omega$, and the
computational domain by $\Omega$.
Next we introduce a forward map $F:\widetilde \Omega\to \Omega$, $\widetilde x\to x$, which
is assumed to be a smooth orientation preserving map with a smooth inverse map $F^{-1}: \Omega
\to\widetilde \Omega$. We denote  the Jacobian of the map $F$ by $J_F$, and the Jacobian
of $F$ with respect to the surface integral by $J^S_{F}$.

Now suppose that the function $\widetilde u_n(\widetilde x,\omega)\in H^1_\diamond(\widetilde\Omega)$
satisfies \eqref{eqn:eit} in the true domain $\widetilde \Omega$ with a conductivity
$\widetilde\sigma(\widetilde x,\omega)$ and input current $\widetilde f_n\in L^2_\diamond(\partial\widetilde\Omega)$, namely
\begin{equation}\label{eqn:eit-deformed}
  \left\{\begin{aligned}
    -\nabla_{\widetilde x}\cdot(\widetilde\sigma(\widetilde x,\omega)\nabla_{\widetilde x}\widetilde u_n(\widetilde x,\omega)) & = 0\quad \mbox{ in }\widetilde\Omega,\\
    \widetilde \sigma(\widetilde x,\omega)\frac{\partial \widetilde u_n(\widetilde x,\omega)}{\partial\widetilde\nu}& = \widetilde f_n\quad \mbox{on }\partial\widetilde\Omega,
  \end{aligned}\right.
\end{equation}
Here the conductivity $\widetilde\sigma(\widetilde x,\omega)$ takes a separable form (cf. \eqref{eqn:gamma})
\begin{equation}\label{eqn:sigma-tilde}
  \widetilde\sigma(\widetilde x,\omega) = \sum_{k=0}^K s_k(\omega)\widetilde \sigma_k(\widetilde x),
\end{equation}
with $\widetilde \sigma_0(\widetilde x) = 1 + \delta \widetilde \sigma_0(\widetilde x)$, and $\widetilde \sigma_k(\widetilde x) = \delta
\widetilde \sigma_k(\widetilde x)$, $k=1,\ldots,K$, where $\delta\widetilde \sigma_k$ are small and their supports are disjoint and stay
away from $\partial\widetilde\Omega$.
The weak formulation (by suppressing the dependence on
$\omega$) is given by: find $\widetilde u_n(\cdot,\omega) \in H_\diamond^1(\widetilde\Omega)$ such that
\begin{equation}\label{eqn:un-perturbed}
  \int_{\widetilde\Omega}\widetilde\sigma\nabla_{\widetilde x} \widetilde u_n \cdot\nabla_{\widetilde x} \widetilde v d\widetilde x = \int_{\partial\widetilde\Omega}\widetilde f_n\widetilde v d\widetilde s,\qquad  \widetilde v\in H^1(\widetilde\Omega).
\end{equation}

Let us now discuss the experimental setup. The practitioner chooses a current density
$f_n\in L_\diamond^2(\partial\Omega)$ defined on $\partial\Omega$. It is then applied
to the unknown boundary $\partial\widetilde\Omega$. The applied
current $\widetilde f_n$ on $\partial\widetilde\Omega$ results to be
\begin{equation}\label{eq:transform_f}
\widetilde f_n = (f_n\circ F)|\det J_F^S|.
\end{equation}
This implies $\int_{\partial\widetilde\Omega}\widetilde f_nd\widetilde s =0$, whence
problem \eqref{eqn:eit-deformed} is well-posed.
This induces the potential $\widetilde u_n \in H_\diamond^1(\widetilde\Omega)$ given by
 \eqref{eqn:un-perturbed}, which should be measured on $\partial\widetilde\Omega$. However, due to the incorrect
knowledge of the boundary, the measured quantity is in fact  $u_n:=\widetilde u_n \circ F^{-1}$ restricted
to  $\partial\Omega$.

\begin{remark}
The current density on $\partial\widetilde\Omega$
is locally defined by $\widetilde J=I/{\rm area}(\widetilde A)$, where $I$ is the current injected through a
small surface $\widetilde A\subseteq \partial\widetilde\Omega$. Thus
$
\widetilde J =\frac{I}{{\rm area}(\widetilde A)} = \frac{I}{{\rm area}(A)}\frac{{\rm area}(A)}{{\rm area}(\widetilde A)} = J\, \frac{{\rm area}(A)}{{\rm area}(\widetilde A)},
$
where $J$ is the current density on $A:=F(\widetilde A)\subseteq\partial\Omega$. Hence,
$|\det J_F^S|$ is the infinitesimal version of $\frac{{\rm area}(A)}{{\rm area}
(\widetilde A)}$. Since
$\int_{\partial\widetilde\Omega} \widetilde f_n \widetilde u_n d\widetilde s = \int_{\partial\Omega} f_nu_n\,ds$
and $\int_{\partial\Omega}f_nu_n\,ds$ denotes the power needed to maintain the
potential $u_n$ on $\partial\Omega$, the choice \eqref{eq:transform_f}
preserves the needed power for the data.
\end{remark}

We consider only the case that  $\Omega$ is a small variation of $\widetilde\Omega$ (but comparable with
$\delta\sigma_k$s) so that the linearized
regime is valid. We write $F:\widetilde\Omega\to\Omega$ by $F(\widetilde x)=\widetilde x + \epsilon
\widetilde\phi(\widetilde x)$, where $\epsilon>0$ is small
and the smooth function $\widetilde\phi(\widetilde x)$ characterizes the deformation. Let $F^{-1}(x)
=x+\epsilon \phi(x)$ be the inverse, which is also smooth.
To examine its influence on the linearized inverse problem, we introduce
 $v_m\in H_\diamond^1(\Omega)$ corresponding to
$\sigma_0(x,\omega)=s_0(\omega)$ in $\Omega$ and the flux $f_m$, i.e.,
\begin{equation}\label{eqn:vs-perturbed}
\int_\Omega \sigma_0\nabla v_m \cdot\nabla v\,dx = \int_{\partial\Omega}f_mv\,ds,\qquad  v\in H^1(\Omega).
\end{equation}

We can now state the corresponding linearized inverse problem.
The proof shows that even for an isotropic $\widetilde\sigma$ in $\widetilde\Omega$,
cf. \eqref{eqn:sigma-tilde}, in $\Omega$ the equivalent $\sigma$ is
generally anisotropic.

\begin{proposition}\label{prop:linear-perturb}
Set $\delta\sigma_k = \delta \widetilde \sigma_k \circ F^{-1}$ for $k=0,1,\ldots,K$ and $v^*_m =  s_0(\omega) v_m$ for $m=1,\dots,N$.
The linearized inverse problem on the computational domain $\Omega$ is given by
\begin{equation}\label{eqn:int-domaindeform}
  s_0(\omega)\epsilon\int_\Omega\!\Psi\nabla v^*_n\cdot\nabla v^*_m\,dx+ \sum_{k=0}^Ks_k(\omega)\!\int_{\Omega}\delta\sigma_k\nabla
  v^*_n\cdot\nabla v^*_m\,dx =\! s_0(\omega)^2\int_{\partial\Omega}(f_nv_m-f_mu_n)\,ds,
\end{equation}
for some smooth function $\Psi:\Omega\to\mathbb{R}^{d\times d}$, which is independent of the frequency $\omega$.
\end{proposition}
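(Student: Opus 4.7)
The plan is to pull the true equation \eqref{eqn:un-perturbed} back from $\widetilde\Omega$ to the computational domain $\Omega$ via $F$ and then linearize in the small parameter $\epsilon$, so that the effect of the unknown boundary appears as an $\omega$-independent anisotropic perturbation of $s_0 I$. Define $u_n(x,\omega) := \widetilde u_n(F^{-1}(x),\omega)$ on $\Omega$, and for any test function $v\in H^1(\Omega)$ put $\widetilde v = v\circ F \in H^1(\widetilde\Omega)$. By the chain rule $\nabla_{\widetilde x}\widetilde u_n = (DF)^T (\nabla_x u_n)\circ F$, and the change of variables $x=F(\widetilde x)$ gives $d\widetilde x = |\det DF|^{-1}\,dx$. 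The assumption \eqref{eq:transform_f} together with the surface change of variables turns the right-hand side of \eqref{eqn:un-perturbed} into $\int_{\partial\Omega} f_n v\,ds$.

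Inserting these transformations into \eqref{eqn:un-perturbed} shows that $u_n$ satisfies the weak equation
\begin{equation*}
  \int_\Omega \sigma^F(x,\omega)\nabla u_n\cdot\nabla v\,dx = \int_{\partial\Omega} f_n v\,ds,\qquad v\in H^1(\Omega),
\end{equation*}
with the (generally anisotropic) tensor $\sigma^F(x,\omega) = (\widetilde\sigma\circ F^{-1})(x,\omega)\,G(x)$, where the purely geometric factor $G(x):=\bigl[(DF)(DF)^T/|\det DF|\bigr]\circ F^{-1}(x)$ does not depend on $\omega$. Using $F(\widetilde x)=\widetilde x+\epsilon\widetilde\phi(\widetilde x)$, I would expand $DF=I+\epsilon D\widetilde\phi+O(\epsilon^2)$ and $|\det DF|=1+\epsilon\nabla\cdot\widetilde\phi+O(\epsilon^2)$, and pull everything back through $F^{-1}$, obtaining
\begin{equation*}
  G(x) = I + \epsilon\,\Psi(x) + O(\epsilon^2),
\end{equation*}
for a smooth, symmetric, $\omega$-independent matrix-valued function $\Psi$ (explicitly built from first derivatives of $\widetilde\phi$, evaluated along $F^{-1}$). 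Combined with $\widetilde\sigma\circ F^{-1} = s_0(\omega) + \sum_{k=0}^K s_k(\omega)\delta\sigma_k$, and discarding products of the two small quantities $\epsilon$ and $\delta\sigma_k$, the first-order expansion is
\begin{equation*}
  \sigma^F(x,\omega) \;\approx\; s_0(\omega)\, I + \epsilon\, s_0(\omega)\,\Psi(x) + \sum_{k=0}^K s_k(\omega)\,\delta\sigma_k(x)\,I.
\end{equation*}

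The last step is the standard reciprocity trick already used to derive \eqref{eqn:integral}: test the pulled-back weak equation for $u_n$ against $v_m$ and the equation \eqref{eqn:vs-perturbed} against $u_n$, subtract, and plug in the above expansion of $\sigma^F$. Applying the linear-regime approximation $\nabla u_n\approx\nabla v_n$ in the two interior perturbation terms and using $v_m=v_m^\ast/s_0(\omega)$ (and likewise for $v_n$) absorbs the factors of $s_0(\omega)$; rearranging yields exactly \eqref{eqn:int-domaindeform}. The main obstacle is the bookkeeping in the linearization: one must confirm that the cross terms $\epsilon\,\delta\sigma_k\,\Psi$ are genuinely of higher order and may be dropped, and that $\Psi$ inherits smoothness and $\omega$-independence from $F$ alone; everything else is routine change-of-variables and the same duality calculation as in Section~\ref{sect:continuum}.
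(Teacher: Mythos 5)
Your proposal is correct and follows essentially the same route as the paper's proof: pull back the weak formulation via $F$ to obtain the transformed (anisotropic) conductivity \eqref{eqn:cond-aniso}, expand $J_F=I+\epsilon J_{\widetilde\phi}$ and $|\det J_F|=1+\epsilon\,\mathrm{div}\,\widetilde\phi+O(\epsilon^2)$ to isolate the $\omega$-independent first-order term $\epsilon\Psi$, and then apply the reciprocity identity together with $\nabla u_n\approx\nabla v_n$ and $v_m=v_m^*/s_0(\omega)$. The cross terms $\epsilon\,\delta\sigma_k\,\Psi$ you flag are indeed dropped as second-order in the paper as well, so there is no gap.
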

\begin{proof}
First, we derive the governing equation for the variable $u_n=\widetilde u_n\circ F^{-1}$
in the domain $\Omega$ from \eqref{eqn:un-perturbed}. Denote by $v=\widetilde v\circ F^{-1} \in H^1(\Omega)$.
By the chain rule we have $\nabla_{\widetilde x}\widetilde u_n\circ F^{-1}=(J_F^t\circ F^{-1})\nabla_x
u_n$, where the superscript $t$ denotes the matrix transpose. Thus, we deduce
\begin{equation*}
   \quad \int_{\widetilde\Omega}\widetilde \sigma\nabla_{\widetilde x} \widetilde u_n
   \cdot\!\nabla_{\widetilde x} \widetilde v d\widetilde x  =  \int_\Omega
  \sigma  \nabla u_n\cdot  \nabla v\,dx,
  \end{equation*}
  where the transformed conductivity $\sigma(x,\omega)$ is given by \cite{Sylvester:1990,KolehmainenLassas:2005,KolehmainenLassasOla:2008}
\begin{equation}\label{eqn:cond-aniso}
  \sigma(x,\omega) = \left(\frac{J_F(\cdot)\widetilde\sigma(\cdot,\omega)J_F^t(\cdot)}{|\det J_F(\cdot)|}\circ F^{-1}\right)(x).
\end{equation}
Moreover, by \eqref{eq:transform_f} we have
$\int_{\partial\widetilde \Omega}\widetilde f_n \widetilde vd\widetilde s  = \int_{\partial\Omega}f_nv \,ds.$
Hence, by  \eqref{eqn:un-perturbed} the potential $u_n$ satisfies
\begin{equation}\label{eqn:us-deformed}
\int_\Omega \sigma\nabla u_n \cdot\nabla v\,dx = \int_{\partial\Omega}f_nv\,ds, \qquad  v\in H^1(\Omega).
\end{equation}
Then by choosing $v=v_m$ in \eqref{eqn:us-deformed} and $v=u_n$ in \eqref{eqn:vs-perturbed}, we arrive at
\begin{equation}\label{eqn:uv}
  \int_\Omega (\sigma-\sigma_0)\nabla u_n\cdot\nabla v_m \,dx = \int_{\partial\Omega}(f_nv_m-f_mu_n)\,ds.
\end{equation}
Note that $J_F= I + \epsilon J_{\widetilde\phi}$, and
$J_{F^{-1}}=I+\epsilon J_\phi = I-\epsilon J_{\widetilde\phi}\circ F^{-1}+O(\epsilon^2)$, since $\epsilon$ is small.
Since $|\det J_F|= 1+\epsilon\mathrm{div}\widetilde\phi +O(\epsilon^2)$
\cite[equation (2.10)]{Hettlich:1995}, $\sigma(x,\omega)$
can be written as
\begin{equation*}
  \begin{split}
    \sigma(x,\omega) & = \widetilde\sigma(\cdot,\omega)(1+\epsilon\mathrm{div}\widetilde\phi(\cdot))^{-1}(I+\epsilon(J_{\widetilde\phi}(\cdot)+J_{\widetilde\phi}^t(\cdot)))\circ F^{-1}(x)+O(\epsilon^2)\\
    &= \widetilde\sigma(\cdot,\omega)((1-\epsilon\mathrm{div}\widetilde\phi(\cdot))I + \epsilon(J_{\widetilde\phi}(\cdot)+J^t_{\widetilde\phi}(\cdot)))\circ F^{-1}(x) + O(\epsilon^2) \\
    &= \widetilde\sigma(\cdot,\omega) \circ F^{-1}(x) + \Psi(x) \epsilon + O(\epsilon^2),
  \end{split}
\end{equation*}
where $\Psi=(J_{\widetilde\phi}+J_{\widetilde\phi}^t- \mathrm{div}\widetilde\phi I)\circ F^{-1}$ is
independent of $\omega$. Thus we obtain
\begin{equation}\label{eqn:cond-deformed}
  \sigma(x,\omega) \approx s_0(\omega)I +\epsilon s_0(\omega)\Psi(x)  +\sum_{k=0}^K\delta\sigma_k(x)s_k(\omega)I.
\end{equation}
Substituting it into \eqref{eqn:uv} and invoking the approximation
$\nabla u_n\approx\nabla v_n$ complete the proof.
\end{proof}

By Proposition~\ref{prop:linear-perturb}, in the presence of an imperfectly known boundary with the deformation magnitude
$\epsilon$ comparable with $\{\delta\sigma_k\}_{k=0}^K$, there is a dominant source of
errors in \eqref{eqn:int-domaindeform}: it contains an additional anisotropic
component $\epsilon \Psi$. Thus a direct inversion of \eqref{eqn:int-domaindeform} is unsuitable. This explains
the observation that a slightly incorrect boundary can lead to erroneous
recoveries \cite{AdlerGuardo:1996,GersingHofmannOsypka:1996}.
This issue can be resolved by mfEIT. Indeed, by rearranging  \eqref{eqn:int-domaindeform} we obtain
\begin{equation}\label{eqn:int-domaindeform-2}
  s_0(\omega)\int_\Omega(\epsilon\Psi+\delta\sigma_0)\nabla v^*_n\cdot\nabla v^*_m\,dx + \sum_{k=1}^K
  s_k(\omega)\int_{\Omega}\delta\sigma_k\nabla v^*_n\cdot\nabla v^*_m\,dx
  =s_0(\omega)^2\! \int_{\partial\Omega}(f_nv_m-f_mu_n)\,ds.
\end{equation}
This is analogous to \eqref{eqn:integral}, with the only difference lying in the
extra term $\epsilon\Psi$. Hence, all methods in  Section~\ref{sect:continuum}
are applicable. The inclusion
$\delta\sigma_0$ will never be properly recovered, due to the pollution of the term $\epsilon\Psi$.
However, $\{\delta\sigma_k(\omega)\}_{k=1}^K$ may be reasonably recovered, since
they are affected slightly by the deformation only through $\delta\sigma_k = \delta \widetilde \sigma_k
\circ F^{-1}$. Thus, mfEIT can effectively eliminate modeling errors caused by the boundary
uncertainty.

\section{The Complete Electrode Model}\label{sec:cem}
In this section we adapt the approach discussed in Sections~\ref{sect:continuum} and \ref{sec:unknownboundary} to the more
realistic complete electrode model (CEM).

\subsection{Perfectly Known Boundary}
First we consider  the case of a perfectly known boundary.
Let $\Omega$ be an open bounded domain in $\mathbb{R}^d$ ($d=2,3$), with a smooth boundary
$\partial\Omega$. We denote the set of electrodes by $\{e_j\}_{j=1}^E\subset\partial\Omega$, which are disjoint from
each other, i.e., $\bar{e}_i\cap\bar{e}_k=\emptyset$ if $i\neq k$. The applied current on the $j$th electrode $e_j$ is
denoted by $I_j$, and the current vector $I=(I_1,\ldots,I_E)^{t}$ satisfies $\sum_{j=1}^EI_j=0$ by the law of
charge conservation. Let the space $\mathbb{R}_\diamond^E$ be the subspace of $\mathbb{R}^E$ with zero
mean, i.e., $I\in\mathbb{R}_\diamond^E$. The electrode voltages $U=(U_1,\ldots,U_E)^{t}$ are also grounded so
that $U\in\mathbb{R}_\diamond^E$. Then the CEM reads
\cite{ChengIsaacsonNewellGisser:1989,SomersaloCheneyIsaacson:1992}: given the conductivity
$\sigma(x,\omega)$, positive
contact impedances $\{z_j\}_{j=1}^E$ and an input current $I\in\mathbb{R}_\diamond^E$, find the potential $u(\cdot,\omega)\in
H^1(\Omega)$ and the electrode voltages $U\in\mathbb{R}_\diamond^E$ such that
\begin{equation}\label{eqn:cem}
\left\{\begin{aligned}
-\nabla\cdot(\sigma(x,\omega)\nabla u(x,\omega))&=0  \mbox{ in }\Omega,\\ %[ex]
u+z_j\frac{\partial u}{\partial \nu_\sigma}&=U_j \mbox{ on } e_j,\; j=1,2,\ldots,E,\\ %[.9ex]
\int_{e_j}\frac{\partial u}{\partial \nu_\sigma}\,ds &=I_j \mbox{ for } j=1,2,\ldots, E,\\ %[.9ex]
\frac{\partial u}{\partial \nu_\sigma}&=0\mbox{ on } \partial\Omega\backslash\cup_{j=1}^E e_j,
\end{aligned}\right.
\end{equation}
where $\frac{\partial u}{\partial \nu_\sigma}$ denotes the co-normal derivative
 $(\sigma\nabla u)\cdot \nu$. The second line describes the contact impedance effect. In practice, the contact
impedances $\{z_j\}_{j=1}^E$ are observed to be inversely proportional to the conductivity of the object
\cite{Holm:1967,Hwang:1997}, i.e.,
\begin{equation}\label{eq:zj}
   z_j = s_0(\omega)^{-1} c_j,
\end{equation}
for some constants $c_j > 0$ independently of $\omega$, since by assumption, near $\partial\Omega$
we have $\sigma(x,\omega) = s_0(\omega)$. The weak formulation
is given by: find $(u,U)\in\mathbb{H} := H^1(\Omega)\times\mathbb{R}_\diamond^E$
such that \cite{GehreJinLu:2014}
\begin{equation*}
  \int_{\Omega}\sigma\nabla u\cdot\nabla v\,dx  + \sum_{j=1}^Ez_j^{-1}
  \int_{e_j}(u-U_j)(v-V_j)\,ds = \sum_{j=1}^EI_jU_j,   \qquad (v,V)\in\mathbb{H}.
\end{equation*}
The bilinear form defined on the left hand side is coercive and continuous on
$\mathbb{H}$, and thus by Lax-Milgram theorem there exists a unique solution $(u(\cdot,\omega),U(\omega))\in \mathbb{H}$.

Consider $N$ input currents $\{I_n\}_{n=1}^N\subset\mathbb{R}_\diamond^E$, and let $\{(u_n,U_n)\}_{n=1}^N\subset \mathbb{H}$
be the corresponding solutions \eqref{eqn:cem}, i.e., for all $(v,V)\in \mathbb{H}$
\begin{equation}\label{eqn:us-cem}
\int_\Omega \sigma\nabla u_n\cdot\nabla v \,dx + \sum_{j=1}^E z_j^{-1}\int_{e_j}(u_n-U_{n,j})(v-V_{j})\,ds = \sum_{j=1}^EI_{n,j}V_{j}.
\end{equation}
The electrode voltages $U_n\in\mathbb{R}_\diamond^E$ can be measured, and are used to recover the
conductivity $\sigma(x,\omega)$. To derive a linearized model, let $(v_m,V_m)\in \mathbb{H}$ be
the solution corresponding to the reference conductivity $\sigma_0(x,\omega)=s_0(\omega)$: for every $(v,V)\in \mathbb{H}$ we have
\begin{equation}\label{eqn:vs-cem}
\int_\Omega \sigma_0\nabla v_m\cdot\nabla v \,dx
 + \sum_{j=1}^E z_j^{-1}\int_{e_j}(v_m-V_{m,j})(v-V_j)\,ds = \sum_{j=1}^EI_{m,j}V_j.
\end{equation}
By  \eqref{eq:zj}, we can write $(v^*_m,V^*_m) = s_0(\omega)(v_m,V_m)$
for the solution $(v^*_m,V^*_m)$ corresponding to $\sigma_0 \equiv 1$.
Now we assume that $\sigma(x,\omega)$ follows
\eqref{eqn:gamma}. Using \eqref{eqn:us-cem} and \eqref{eqn:vs-cem},
we deduce
\begin{equation*}
  \sum_{k=0}^Ks_k(\omega)\int_\Omega \delta\sigma_k\nabla u_n\cdot\nabla v_m\,dx = \sum_{j=1}^E(I_{n,j}V_{m,j}-I_{m,j}U_{n,j}).
\end{equation*}
Then, under the approximation $\nabla u_n\approx\nabla v_n$ in the
domain $\Omega$, and the approximation \eqref{eqn:piece-const} of the inclusions
$\delta\sigma_k$s on the triangulation $\{\Omega_l\}_{l=1}^L$, we have
\begin{equation}\label{eqn:cem_lin}
  \sum_{k=0}^K s_k(\omega)\sum_{l=1}^L(\delta\sigma_k)_l\int_{\Omega_l}\nabla v^*_n\cdot
  \nabla v^*_m\,dx = s_0(\omega)^2 \sum_{j=1}^E(I_{n,j}V_{m,j}-I_{m,j}U_{n,j}).
\end{equation}
This formula is almost identical with \eqref{eqn:integral}, and formally their only
difference lies in the computation of $X(\omega)$. Hence, all the discussions in
Section~\ref{sect:continuum} can be adapted to the CEM \eqref{eqn:cem}. In particular,
all inversion methods therein can be directly applied  to this case.

\subsection{Imperfectly Known Boundary}\label{sub:cem-unknown}
Now we consider the case of an imperfectly known boundary. Like before, let $\widetilde\Omega$
be the unknown true domain with a smooth boundary $\partial\widetilde\Omega$, and $\Omega$ be the computational domain
with a smooth boundary $\partial\Omega$. Accordingly, let $\{\widetilde e_j\}_{j=1}^E\subset\partial\widetilde
\Omega$ and $\{e_j\}_{j=1}^E\subset \partial\Omega$ be the real and computational electrodes, respectively, and
assume they satisfy the usual conditions discussed above. Then we introduce a smooth orientation
preserving forward map $F:\widetilde \Omega\to \Omega$,  with a smooth inverse $F^{-1}: \Omega \to
\widetilde \Omega$, and we denote  the restriction of $F$ to the boundary $\partial\widetilde \Omega$ by $f:\partial\widetilde \Omega\to
\partial\Omega$. We write $F^{-1}(x)=x+\epsilon\phi(x)$, where $\epsilon>0$ denotes the deformation magnitude.
Further, it is assumed that
there is no further electrode movement, i.e., $e_j=f(\widetilde e_j)$, $j=1,\ldots,E$.  With the
conductivity $\widetilde\sigma(\widetilde x,\omega)$ of the
form \eqref{eqn:sigma-tilde} and input current $I_n\in\mathbb{R}_\diamond^E$, by \eqref{eqn:cem}, the quantity $(\widetilde
u_n(\widetilde x,\omega),\widetilde U_n(\omega))\in\widetilde{\mathbb{H}}\equiv H^1(\widetilde\Omega)\times \mathbb{R}_\diamond^E$ satisfies
\begin{equation}\label{eqn:cem-deformed}
  \left\{\begin{aligned}
    -\nabla_{\widetilde x}\cdot(\widetilde\sigma(\widetilde x,\omega)\nabla_{\widetilde x} \widetilde u_n(\widetilde x,\omega)) &= 0\quad \mbox{in } \widetilde\Omega,\\
    \int_{\widetilde e_j}\frac{\partial \widetilde u_n}{\partial \widetilde\nu_{\widetilde\sigma}}d\widetilde{s}&=I_{n,j}\quad \mbox{on }\widetilde e_j,j=1,2,\ldots,E,\\
    z_j\frac{\partial \widetilde u_n}{\partial\widetilde \nu_{\widetilde\sigma}} +\widetilde u_n &= \widetilde U_{n,j} \quad \mbox{on }\widetilde e_j,j=1,2,\ldots,E,\\
    \frac{\partial\widetilde u_n}{\partial\widetilde \nu_{\widetilde\sigma}} & = 0 \quad \mbox{on } \partial\widetilde\Omega\setminus \cup_{j=1}^E\widetilde e_j.
  \end{aligned}\right.
\end{equation}
The weak formulation is given by: find $(\widetilde u_n,\widetilde U_n)\in\widetilde{\mathbb{H}}$ such that for every $(\widetilde v,\widetilde V)\in \widetilde{\mathbb{H}}$
\[
  \int_{\widetilde\Omega} \widetilde \sigma\nabla_{\widetilde x}\widetilde u_n
  \cdot \nabla_{\widetilde x} \widetilde v \,d\widetilde x
  + \sum_{j=1}^Ez_j^{-1}\int_{\widetilde e_j}(\widetilde u_n-\widetilde U_{n,j})(\widetilde
  v-\widetilde V_j)\,d\widetilde s = \sum_{j=1}^E  I_{n,j}\widetilde V_j.
\]
In the experimental setting, on $\Omega$, the injected current $I_n\in\mathbb{R}_\diamond^E$
on the electrodes $\{e_j\}_{j=1}^E$ is known, and the corresponding voltage $\widetilde U_n(\omega)\in
\mathbb{R}_\diamond^E$ can be measured. The inverse problem is to recover
$\{\delta\widetilde\sigma_k\}_{k=0}^K$ from the voltages $\{\widetilde U_n(\omega)\}_{n=1}^N\subset\mathbb{R}_\diamond^E$ at $\{\omega_q\}_{q=1}^Q$.

Now we can state the corresponding linearized inverse problem for \eqref{eqn:cem-deformed}.
Consider the potential $u_n(\cdot,\omega)=\widetilde{u}_n
(\cdot,\omega)\circ F^{-1}$, and the associated electrode voltages $U_n=\widetilde U_n$.

\begin{proposition}\label{prop:cem-deformed}
Let the reference solutions $(v_m,V_m)\in \mathbb{H}$ be defined by \eqref{eqn:vs-cem} and the conductivity
$\widetilde\sigma$ be of the form \eqref{eqn:sigma-tilde}. Set $z=|\det J_{F^{-1}}^S|$, $\delta\sigma_k = \delta \widetilde \sigma_k
\circ F^{-1}$ for  $k=0,1,\ldots,K$ and $(v^*_m,V^*_m) = s_0(\omega)(v_m,V_m)$ for $m=1,\dots,N$. The linearized inverse problem on $\Omega$ is given by
\begin{multline}\label{eqn:lin-inv-cem-deformed}
   s_0(\omega)\epsilon\int_\Omega \Psi \nabla v^*_n\cdot\nabla v^*_m\,dx + \sum_{k=0}^Ks_k(\omega)\int_{\Omega}\delta\sigma_k\nabla v^*_n\cdot\nabla v^*_m\,dx\\
    = s_0(\omega)^2 \sum_{j=1}^E (I_{n,j}V_{m,j}-I_{m,j}U_{n,j})
    -s_0(\omega) \sum_{j=1}^E c_j \int_{e_j}(z-1)\left(\frac{\partial v^*_m}{\partial\nu}\right)^2 ds,
\end{multline}
for some smooth function $\Psi:\Omega\to\mathbb{R}^{d\times d}$, which is independent of the frequency $\omega$.
\end{proposition}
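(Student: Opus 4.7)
The plan is to mimic the proof of Proposition~\ref{prop:linear-perturb}, but to carefully account for the additional boundary contribution coming from the contact impedance terms when the electrodes are viewed through the map $F$. First, I push the weak form of \eqref{eqn:cem-deformed} forward from $\widetilde\Omega$ to $\Omega$ using the change of variables $u_n = \widetilde u_n\circ F^{-1}$, $U_n = \widetilde U_n$, and analogous substitutions for the test pair $(v,V) = (\widetilde v\circ F^{-1},\widetilde V)$. The volume integral transforms exactly as in Proposition~\ref{prop:linear-perturb}, producing the anisotropic conductivity $\sigma$ of \eqref{eqn:cond-aniso}. For the boundary integrals, since $e_j = f(\widetilde e_j)$, the surface measure transforms by the Jacobian $|\det J_{F^{-1}}^S| = z$, which has the expansion $z = 1 + O(\epsilon)$. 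Thus the pushed-forward equation reads
\begin{equation*}
 \int_\Omega \sigma\nabla u_n\cdot\nabla v\,dx + \sum_{j=1}^E z_j^{-1}\int_{e_j}(u_n-U_{n,j})(v-V_j)\,z\,ds = \sum_{j=1}^E I_{n,j}V_j.
\end{equation*}

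Second, I test this equation against $(v_m,V_m)$ and test the reference equation \eqref{eqn:vs-cem} against $(u_n,U_n)$, then subtract. The $I_{n,j}V_{m,j} - I_{m,j}U_{n,j}$ terms land on the right; the volume contributions give $\int_\Omega(\sigma-\sigma_0)\nabla u_n\cdot\nabla v_m\,dx$, and the boundary discrepancy collapses to $\sum_j z_j^{-1}\int_{e_j}(u_n-U_{n,j})(v_m-V_{m,j})(z-1)\,ds$, because the part with $z=1$ is common to both equations and cancels. I then plug in the expansion \eqref{eqn:cond-deformed} of $\sigma - \sigma_0$ and invoke the linearization $\nabla u_n \approx \nabla v_n$; this reproduces the first two integrals that appear in \eqref{eqn:lin-inv-cem-deformed}, exactly as in the continuum case.

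Third, I simplify the new boundary term. The key identity is the contact-impedance strong form for the reference solution: on $e_j$, the condition $z_j \partial v_m/\partial\nu_{\sigma_0} + v_m = V_{m,j}$ together with $\sigma_0 = s_0(\omega)$ and $z_j = s_0(\omega)^{-1}c_j$ gives $v_m - V_{m,j} = -c_j\,\partial v_m/\partial\nu = -s_0(\omega)^{-1}c_j\,\partial v_m^*/\partial\nu$, and analogously for $v_n$. In the linearized regime $u_n - U_{n,j}\approx v_n - V_{n,j}$, so after substituting and combining the factors $z_j^{-1} = s_0(\omega)c_j^{-1}$ with the two $s_0(\omega)^{-1}$'s from $v_m^*$ and $v_n^*$, the boundary contribution reduces to $s_0(\omega)^{-1}\sum_j c_j\int_{e_j}(\partial v_n^*/\partial\nu)(\partial v_m^*/\partial\nu)(z-1)\,ds$.

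Finally, multiplying the whole identity by $s_0(\omega)^2$ rescales $\nabla v_m,\nabla v_n$ into $\nabla v_m^*,\nabla v_n^*$ in the volume terms (matching \eqref{eqn:int-domaindeform}), turns the right-hand side into the familiar $s_0(\omega)^2\sum_j(I_{n,j}V_{m,j} - I_{m,j}U_{n,j})$, and leaves the boundary term as $s_0(\omega)\sum_j c_j\int_{e_j}(z-1)(\partial v_n^*/\partial\nu)(\partial v_m^*/\partial\nu)\,ds$ on the left, which we transpose to the right with the claimed minus sign. The definition $\Psi = (J_{\widetilde\phi} + J_{\widetilde\phi}^t - \operatorname{div}\widetilde\phi\,I)\circ F^{-1}$ is imported verbatim from Proposition~\ref{prop:linear-perturb} and is manifestly $\omega$-independent. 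The main obstacle I anticipate is precisely the bookkeeping of $s_0(\omega)$ powers: $z_j$ is frequency dependent through $s_0(\omega)^{-1}$, while $z$ is not, so the combination must be checked carefully to land on the correct prefactor $s_0(\omega)\,c_j$. A secondary subtlety is that the pushed-forward equation no longer satisfies a CEM-type boundary condition (because of the extra factor $z$), so the replacement $u_n - U_{n,j}\approx v_n - V_{n,j}$ is justified by the linearization $u_n\approx v_n$ together with the \emph{reference} contact-impedance relation, rather than by any strong form derived for $u_n$ itself; the resulting error is of higher order in $\epsilon$ and $\delta\sigma_k$ and can be absorbed into the same linearization already used in the interior.
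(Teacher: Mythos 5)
Your proof follows essentially the same route as the paper's: push the weak form forward with $u_n=\widetilde u_n\circ F^{-1}$ and $U_n=\widetilde U_n$ (picking up the factor $z$ on the electrode integrals), subtract the reference equation tested against $(u_n,U_n)$, replace $u_n-U_{n,j}$ by $v_n-V_{n,j}$ via the linearization together with the \emph{reference} contact-impedance relation, and import $\Psi$ verbatim from Proposition~\ref{prop:linear-perturb}; the $s_0(\omega)$ bookkeeping you flag works out exactly as you describe. The only divergence is that you correctly arrive at the mixed product $\frac{\partial v_n^*}{\partial\nu}\,\frac{\partial v_m^*}{\partial\nu}$ in the boundary error term, whereas the paper's statement and proof write $\left(\frac{\partial v_m^*}{\partial\nu}\right)^2$ --- an identification valid only for $n=m$ --- so your version is in fact the more accurate one.
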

\begin{proof}
Proceeding as in the proof of Proposition~\ref{prop:linear-perturb}, by a change of variables
(and suppressing the variable $\omega$), since $e_j=f(\widetilde e_j)$ we deduce
\[
    \int_{\widetilde\Omega}\! \widetilde \sigma \nabla_{\widetilde x}\widetilde u_n\cdot
    \nabla_{\widetilde x} \widetilde v  d\widetilde x
     = \int_\Omega\! (\widetilde\sigma\circ {F^{-1}}) (J_F^t\circ F^{-1})\nabla u_n\cdot (J_F^t\circ
    F^{-1})\nabla v|\!\det\! J_{F^{-1}}|dx
\]
  and
  \begin{equation*}
    \int_{\widetilde e_j}(\widetilde u_n-\widetilde U_{n,j})(\widetilde v-\widetilde V_j)d\widetilde s  = \int_{e_j}(u_n-U_{n,j})(v-V_j)|\det J_{F^{-1}}^S|\,ds,
\end{equation*}
where  $v=\widetilde v\circ F^{-1} \in H^1(\Omega)$ and $V_j = \widetilde V_j$. Hence, $(u_n(\cdot,\omega),U_n(\omega))$
 satisfies for every $(v,V)\in \mathbb{H}$
\begin{equation*}
  \int_\Omega \sigma\nabla u_n\cdot\nabla v\,dx + \sum_{j=1}^Ez_j^{-1}\int_{e_j}(u_n-U_{n,j})(v-V_j)z\,\,ds = \sum_{j=1}^EI_{n,j}V_j,
\end{equation*}
where $\sigma(x,\omega)$ is given by \eqref{eqn:cond-aniso}.
By combining this identity with \eqref{eqn:vs-cem}, we obtain
\begin{equation*}
  \int_\Omega (\sigma-\sigma_0)\nabla u_n\cdot\nabla v_m\,dx  = \sum_{j=1}^E(I_{n,j}V_{m,j}-I_{m,j}U_{n,j})
  + \sum_{j=1}^E\int_{e_j}(z-1)(u_n-U_{n,j})\frac{\partial v_m}{\partial\nu_{\sigma_0}}\,ds.
\end{equation*}
In view of  \cite{Hettlich:1995,Hettlich:1998},
$
z= 1 + \epsilon (\mathrm{Div}\phi_t -(d-1)H\phi_\nu) + O(\epsilon^2),
$
where $\mathrm{Div}$ denotes the surface divergence, $\phi_t$ and $\phi_\nu$ denote the tangential
and normal components of the vectorial function $\phi$ on $\partial\Omega$, respectively,
 and $H$ is the mean curvature of $\partial\Omega$. In particular, $z-1 = O(\epsilon)$. Thus, by linearization we have
\begin{equation*}
\int_{e_j}(z-1)(u_n-U_{n,j})\frac{\partial v_m}{\partial\nu_{\sigma_0}}\,ds \approx\!
\int_{e_j}\!(z-1)(v_n-V_{n,j})\frac{\partial v_m}{\partial\nu_{\sigma_0}}\,ds
=- z_j \int_{e_j}\!(z-1)\left(\frac{\partial v_m}{\partial\nu_{\sigma_0}}\right)^2 \!ds.
\end{equation*}
Inserting this approximation in the above identity we obtain
\begin{equation*}
  \int_\Omega (\sigma-\sigma_0)\nabla u_n\cdot\nabla v_m\,dx  = \sum_{j=1}^E(I_{n,j}V_{m,j}-I_{m,j}U_{n,j})
  - \sum_{j=1}^E z_j \int_{e_j}(z-1)\left(\frac{\partial v_m}{\partial\nu_{\sigma_0}}\right)^2 ds.
\end{equation*}
Using \eqref{eq:zj}, the rest of the proof follows as in Proposition~\ref{prop:linear-perturb}, and thus it is omitted.
\end{proof}

By proceeding as in the continuum model, we can rewrite \eqref{eqn:lin-inv-cem-deformed} as
\begin{multline}\label{eqn:lin-inv-cem-deformed-2}
   s_0(\omega)\int_\Omega (\epsilon\Psi+ \delta\sigma_0) \nabla v^*_n\cdot\nabla v^*_m\,dx + \sum_{k=1}^Ks_k(\omega)
   \int_{\Omega}\delta\sigma_k\nabla v^*_n\cdot\nabla v^*_m\,dx\\
    = s_0(\omega)^2 \sum_{j=1}^E (I_{n,j}V_{m,j}-I_{m,j}U_{n,j})
    -s_0(\omega) \sum_{j=1}^E c_j \int_{e_j}(z-1)\left(\frac{\partial v^*_m}{\partial\nu}\right)^2 \,ds.
\end{multline}
When compared with \eqref{eqn:int-domaindeform-2},
we observe the presence of the additional error term $s_0(\omega)C_m$, where
$
C_m := -  \sum_{j=1}^E c_j \int_{e_j}(z-1)\left(\frac{\partial v^*_m}{\partial\nu}\right)^2 \,ds,
$
which comes from the boundary deformation. The formula \eqref{eqn:lin-inv-cem-deformed-2} is
consistent with \eqref{eqn:int-domaindeform-2}:
in the continuum case, the contact impedance effect is not present, and $u_n = U_n$ on the
electrodes, namely $c_j=0$, whence $C_m=0$.

All the preceding analysis easily carries over to the case $c_j>0$. Before treating
the general case, let us consider the simple scenario where $z\equiv 1$ on the electrodes $\cup_j e_j$.

\begin{exam}\label{ex:z=1}
Recall that $z(x)=|\det J_{F^{-1}}^S(x)|$ for $x\in\partial\Omega$. Physically, the factor $z$ represents
the length/area deformation relative to the map $F^{-1}\colon \partial\Omega\to \partial\widetilde\Omega$. Thus,
it may be reasonable to assume that the parametrization of the electrodes $\{e_j\}_{j=1}^E$ is known,
which implies $z\equiv1$ on the electrodes $\cup_j e_j$.
Then we have $C_m\equiv 0$, whence
\[
   s_0(\omega)\!\int_\Omega\! (\epsilon\Psi+ \delta\sigma_0) \nabla v^*_n\cdot\nabla v^*_m\,dx + \sum_{k=1}^Ks_k(\omega)
   \!\int_{\Omega}\delta\sigma_k\nabla v^*_n\cdot\nabla v^*_m\,dx = s_0(\omega)^2 \sum_{j=1}^E (I_{n,j}V_{m,j}-I_{m,j}U_{n,j}).
\]
This identity is similar to \eqref{eqn:int-domaindeform-2}, and the comments on the
recovery issue remain valid, since the right hand side is known. Thus, by applying
any of the techniques in Section~\ref{sect:continuum}, it is
possible to eliminate the error  $\epsilon\Psi$ due to the domain deformation, as this affects
only the inclusion $\delta\sigma_0$. All the other inclusions $\{\delta\sigma_k\}_{k=1}^K$
 may be successfully recovered.
\end{exam}

Now we consider the general case $z\not \equiv 1$ on $\cup_j e_j$, i.e.,
the length (or the area) of the electrodes is not precisely known.
However, since the error term $C_m$ is independent of $\omega$, the difference
imaging in  Section~\ref{sub:case(b)} may be applied, provided that $0\notin
\mathcal{P}$, i.e., if the frequency profile $s_0(\omega)$ does not vary much with respect to
 $\omega$. Then  $s_0(\omega) C_m$ disappears upon
differentiating \eqref{eqn:lin-inv-cem-deformed-2}, and the inversion step may be performed as in  Section~\ref{sub:case(b)}.

The method of  Section~\ref{sub:case(a)}
may  also be applied, since the error term $s_0
(\omega) C_m$ depends only on $s_0(\omega)$.
Namely, its influence on the inversion step is lumped into $\delta
\sigma_0$, like the conductivity perturbation $\epsilon\Psi$. Thus, all
the inclusions $\{\delta\sigma_k\}_{k=1}^K$ may be recovered. Alternatively, one may see
this as follows. When multiplying
the system of equations associated to \eqref{eqn:lin-inv-cem-deformed-2}
by $S^{-1}$, the error term $s_0(\omega) C_m$ cancels out in all the systems $MA_k = Y_k$, for
$k=1,\dots,K$:
\begin{equation*}
  \begin{split}
[s_0(\omega_1)C\,,\,\dots\,,\,s_0(\omega_Q)C]\,S^{-1} &=C\,[s_0(\omega_1)\,,\,\dots\,,\,s_0(\omega_Q)]
\begin{bmatrix}
s_0(\omega_1)  &  \cdots  &  s_0(\omega_Q) \\
\vdots  &  \vdots  & \vdots \\
s_K(\omega_1)  &  \cdots  &  s_K(\omega_Q)
\end{bmatrix}^{-1}\\
&= [C\,,\,0\,,\,\dots\,,\,0],
\end{split}
\end{equation*}
where $C$ denotes the column vector corresponding to  $C_m$.

\section{Group Sparse Reconstruction Algorithm}\label{sect:sparsity}

For all the scenarios discussed in the previous sections, one arrives at a number of (decoupled) linear systems
\begin{equation}\label{eqn:lin}
  MA_k = Y_k,\quad k=0,\ldots, K,
\end{equation}
where $M\in\mathbb{R}^{J\times L}$, $A_k\in\mathbb{R}^{L}$,
and $Y_k\in\mathbb{R}^J$. The linear systems are often under-determined, and severely
ill-conditioned, due to the ill-posed nature of the EIT inverse problem. Below we describe a heuristic
and yet very effective strategy
for the stable and accurate solution of \eqref{eqn:lin}; we refer to \cite{ScherzerGrasmair:2009,
SchusterKaltenbacher:2012,ItoJin:2014} for general discussions on regularization methods.

There are several natural aspects for the regularization term, especially sparsity, grouping,
 disjoint sparsity and bound constraints.
\begin{enumerate}
  \item[(1)] For every $k$, the abundance $A_k=(\delta\sigma_k)_l\in\mathbb{R}^L$ is sparse
  with respect to the pixel basis.  This suggests minimizing
  \begin{equation*}
    \min_{A_k\in{\Lambda}} \|A_k\|_1 \quad \mbox{subject to } \|MA_k-Y_k\|\leq \epsilon_k
  \end{equation*}
  for each $k=0,\ldots,K$. Here $\|\cdot\|_1$ denotes the $\ell^1$ norm of a vector.
  The set ${\Lambda}$ represents a box constraint on $A_k$, since
  $\sigma$ is bounded from below and above by positive constants, due to
   physical constraint, and $\epsilon_k>0$ is the estimated  noise level of $Y_k$.

  \item[(2)] In EIT applications, each $A_k$ is often clustered, and this
  refers to the concept of group sparsity. The grouping can remove undesirable spikes often observed
  in the recoveries using the $\ell^1$ penalty alone. This can be achieved by e.g., elastic net \cite{JinLorenzSchiffler:2009}. In this work, we shall exploit the dynamic group sparsity (DGS) \cite{HuangHuang:2009},
  which dynamically realizes group sparsity without knowing the supports of the $A_k$s.

  \item[(3)] The $\mathrm{supp}(A_k)$s are disjoint from each other. The disjoint
  supports of $A_k$s can be promoted, e.g., by penalizing the
  scalar product of the absolute values of the $A_k$s \cite{Vervier:2014}.
\end{enumerate}

Next we develop an algorithm, termed as group iterative soft thresholding (GIST), for achieving
the above goals. It combines the strengths of iterative soft thresholding (IST)
\cite{DaubechiesDefrise:2004} and DGS \cite{HuangHuang:2009}:
IST is easy to implement and has a built-in regularizing effect, whereas  DGS encourages the
group sparsity pattern. It is a simple modification of the IST (by omitting the subscript $k$): given an initial guess $A^0$,
construct an approximation iteratively by
\begin{equation*}
  A^{j+1} = S_{s^j\alpha}(g^j),
\end{equation*}
where the proxy $g^j$ is defined by
\begin{equation}\label{eqn:proxy}
  g^j=A^j-s^jM^t(MA^j-Y).
\end{equation}
Note that $M^t(MA^j-Y)$ is the gradient of $\frac12\|MA-Y\|^2$ at $A^j$,
The scalar $\alpha>0$ is a regularization parameter and $s^j>0$ is the step length. One
simple choice of $s^j$ is the constant one $s^j=1/\|M\|^2$, which ensures the convergence
of IST \cite{DaubechiesDefrise:2004}. The operator $S_\lambda$ for $\lambda>0$ is defined by
$S_\lambda(t) = \max(|t|-\lambda,0)\,\mathrm{sign}(t),$
and applied componentwise for a vectorial argument.

In GIST, instead of performing the thresholding on
$g^j$ directly, we take into account the neighboring influence.
This can be achieved by computing a generalized proxy $d^j_l$ by \cite{HuangHuang:2009}
\begin{equation}\label{eqn:gproxy}
   d^j_l = |g^j_l|^2 + \sum_{k\in \mathcal{N}_l}w_{lk}|g^j_k|^2, \qquad l=1,\ldots,L,
\end{equation}
where $w_{lk}\geq0$ are weights, and $\mathcal{N}_l$ denotes the neighborhood of the $l$th element.
The weights $w_{lk}$ determine the correlation strength: the smaller
$w_{lk}$ is, the weaker the correlation between the $l$th and $k$th components is, and
if $w_{lk}=0$ for all $k\in\mathcal{N}_l$, it does not promote grouping at all. In our
implementation, we take $w_{lk}=\beta$, for some $\beta>0$, for all $k\in\mathcal{N}_l$,
with a default value $\beta=0.5$, and $\mathcal{N}_l$ consists of all elements in the
triangulation that share one edge with the $l$th element. Then $d^j$ is used to reweigh the
thresholding step by
\begin{equation}\label{eqn:d-bar}
  \bar d^j= \max(d^j)^{-1}d^j.
\end{equation}
It indicates a normalized grouping effect: the larger  $\bar d^j_l$ is, the more likely
the $l$th element belongs to the group, and thus less thresholding should be applied. This
can be achieved by rescaling $\alpha$ to be proportional to
$(\bar d^j_l)^{-1}$, with a spatially variable regularization parameter
\begin{equation}\label{eqn:adapt-alpha}
  \bar{\alpha}_l^j=\alpha/\bar d^j_l,\quad l=1,\ldots,L,
\end{equation}
and last perform the projected thresholding with $\bar\alpha^j$ % \ga{[why has $s^j$ disappeared?]}
\begin{equation}\label{eqn:gist}
  A^{j+1} = P_{\Lambda}(S_{s^j\bar\alpha^j} (g^j)),
\end{equation}
where $P_\Lambda$ denotes the pointwise projection onto the set $\Lambda$.
The complete procedure is listed in Algorithm~\ref{alg:gist}. Here $N\in\mathbb{N}$ is the maximum
number of iterations, and the initial guess $A^0$ is the zero vector. The parameter $\alpha$ plays a crucial role in the performance of
the algorithm: the larger $\alpha$ is, the sparser the recovered $A$ is.
There are several strategies available for its choice, e.g., discrepancy
principle and balancing principle \cite{ItoJin:2014}.
One can terminate the algorithm by monitoring the relative change of the iterates.

\begin{algorithm}[hbt!]
  \caption{Group iterative soft thresholding.}
  \label{alg:gist}
  \begin{algorithmic}[1]
    \STATE Input $M$, $Y$, $W$, $\mathcal{N}$, $\alpha$, $N$ and $A^0$.
    \FOR {$j=1,\ldots,N$}
     \STATE Compute the proxy $g^j$ by \eqref{eqn:proxy}.
     \STATE Compute the generalized proxy $d^j$ by \eqref{eqn:gproxy}.
     \STATE Compute the normalized proxy $\bar d^j$ by \eqref{eqn:d-bar}.
     \STATE Adapt the regularization parameter $\bar\alpha^j$ by \eqref{eqn:adapt-alpha}.
     \STATE Update the abundance $A^{j+1}$ by the group thresholding \eqref{eqn:gist}.
     \STATE Check the stopping criterion.
    \ENDFOR
  \end{algorithmic}
\end{algorithm}

Last, disjoint sparsity can also be enforced in Algorithm~\ref{alg:gist}. Specifically,
we first compute $\bar d^{k,j}$ for $A_k$ separately according to
\eqref{eqn:d-bar}, and then at each $l=1,\ldots,L$, update them by
\begin{equation*}
  \bar d_l^{k,j} =
  \begin{cases}
  \bar d_l^{k,j} & \mbox{ if } k = k^*_l, \\
  \varepsilon &  \mbox{ otherwise},
  \end{cases}
  \qquad\quad
   k^*_l = \argmax_{k=0,\ldots,K}\,\bar d_l^{k,j},
\end{equation*}
where $\varepsilon>0$ is a small number to avoid numerical overflow. It only retains the most
likely abundance (with the likelihood for $A_k$ given by $\bar d^{k,j}$), and hence
enforces the disjoint sparsity.

\begin{remark}
The theoretical analysis of the dynamic group sparse recovery is still unavailable,
except for compressed sensing problems \cite{HuangHuang:2009}. However, it does not cover
the EIT inverse problem, due to a lack of the restricted isometry property.
\end{remark}

\section{Numerical Experiments and Discussions}\label{sec:numer}
Now we present  numerical results to illustrate the analytic study. We consider
only the CEM \eqref{eqn:cem}, since the results for \eqref{eqn:eit}
are similar. The experimental setup is as follows. The
computational domain $\Omega$ is taken to be the unit circle $\Omega=\{(x_1,x_2):
x_1^2+x_2^2<1\}$. There are sixteen electrodes $\{e_j\}_{j=1}^E$ (with $E=16$) evenly placed along
the boundary $\partial\Omega$, each of length $\pi/16$, thus occupying one half of
$\partial\Omega$, cf. Fig.~\ref{fig:electrodea}.
Unless otherwise specified, the contact impedances $\{z_j\}_{j=1}^E$ on the electrodes $\{e_j\}_{j=1}^E$ are all set
to unit, and $\sigma_0\equiv1$.
Further, we assume that $s_0(\omega)$ for the
background is $s_0(\omega)\equiv1$. This is not a restriction, since $s_0(\omega)$
is known, and one can rescale $s_k(\omega)$s so that $s_0\equiv1$.
We measure $U$ for all 15 sinusoidal input currents. The
model \eqref{eqn:cem} is discretized using a piecewise
linear FEM on a shape regular quasi-uniform triangulation of $\Omega$ \cite{GehreJinLu:2014}.
For the inversion, the conductivity is represented on a coarser mesh using a piecewise constant basis.
Then the noisy data
$U^\delta$ is generated by adding Gaussian noise to the exact data $U^\dag:= U(\sigma^\dag)$
corresponding to the true conductivity $\sigma^\dag(x,\omega)$ as follows
\begin{equation*}
  U_j^\delta = U_j^\dag + \epsilon\max_{l} |U_l^\dag-U_l(\sigma_0)| \varepsilon_j,\quad j=1,\ldots,E,
\end{equation*}
where $\epsilon$ is the relative noise level, and $\varepsilon_j$ follows the standard normal distribution.

\begin{remark}
Colton and Kress \cite[pp. 121, 289]{ColtonKress:1992}
coined the term \textit{inverse crime} to denote the act of employing the \textit{same} model to
generate and to invert synthetic data. Inverse crime often leads to excellent reconstructions
without revealing the ill-posed nature of inverse problems, and hence has to be avoided in numerical
experiments. In Section \ref{sec:numer-perf}, we have employed a finer mesh to generate the data than
 for inversion, in order to alleviate the inverse crime; and in
Section \ref{sec:numer-imperf}, the meshes for generating the data and inversion are completely different.
\end{remark}

We shall present numerical results for the cases of a perfectly known and of an imperfectly known boundary
separately, and discuss only cases a) and b) with spectral profiles that are either fully known or have
substantially different frequency dependence. Case c) will not be discussed since the inversion is
analogous to case a). To solve \eqref{eqn:lin},
we use Algorithm~\ref{alg:gist} with a constant step size. The scalar $\alpha$ was determined in
a trial-and-error manner, and set to $10^{-2}$ for all examples below, unless otherwise specified. We did not implement disjoint
sparsity, since in all examples below the recoveries are already very satisfactory. The algorithm is
always initialized with a zero vector. Numerically, it converges steadily and fast, and for
the examples presented below, it takes about 8 seconds per recovery. All the
computations were performed using MATLAB 2013a on a 2.5G Hz and 6G RAM personal laptop.

\begin{figure}[hbt!]
  \centering
  \subfloat[computational domain $\Omega$]{\includegraphics[width=.4\textwidth]{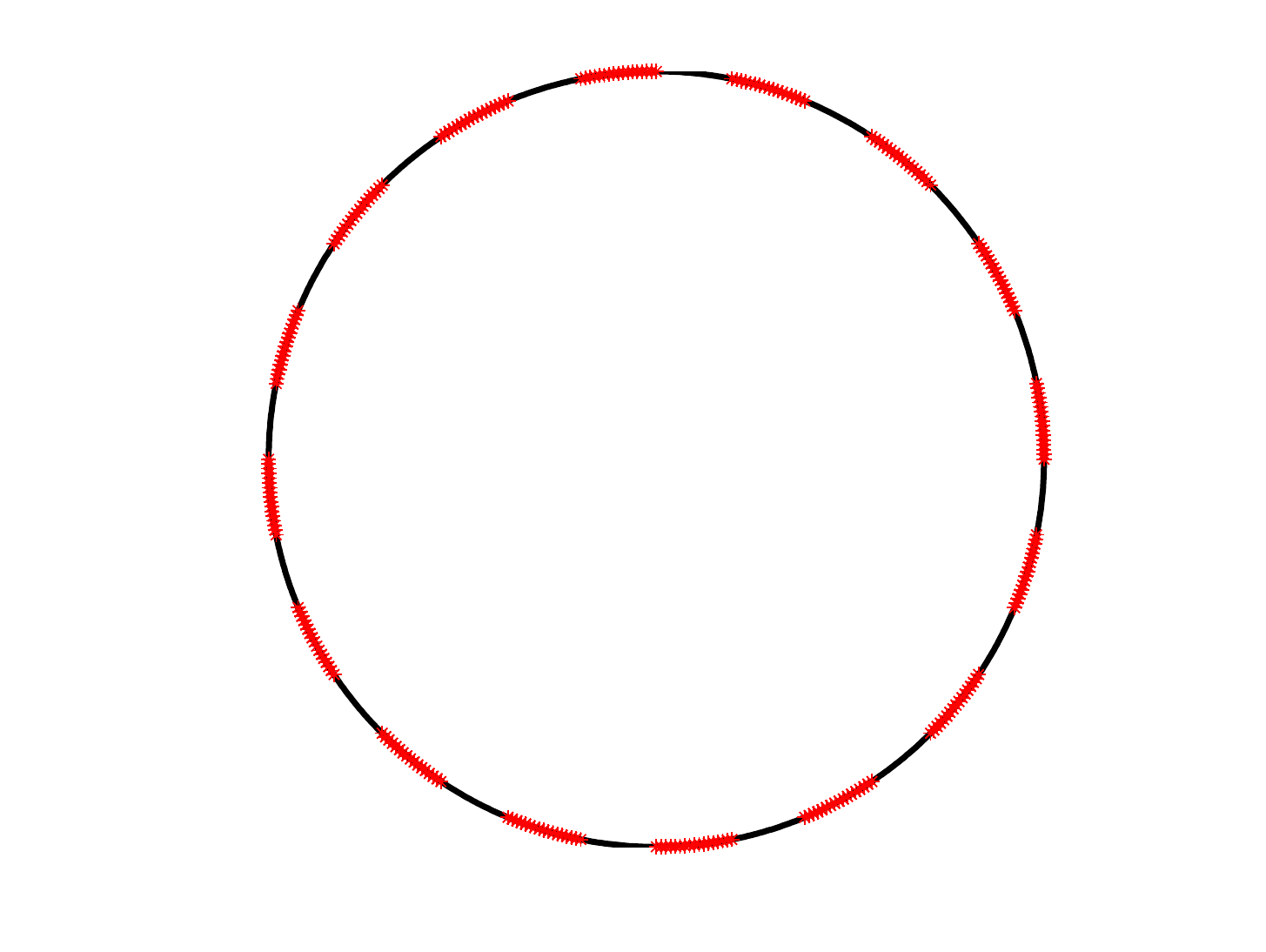}\label{fig:electrodea}}
  \subfloat[imperfectly known electrode positions]{\includegraphics[width=.4\textwidth]{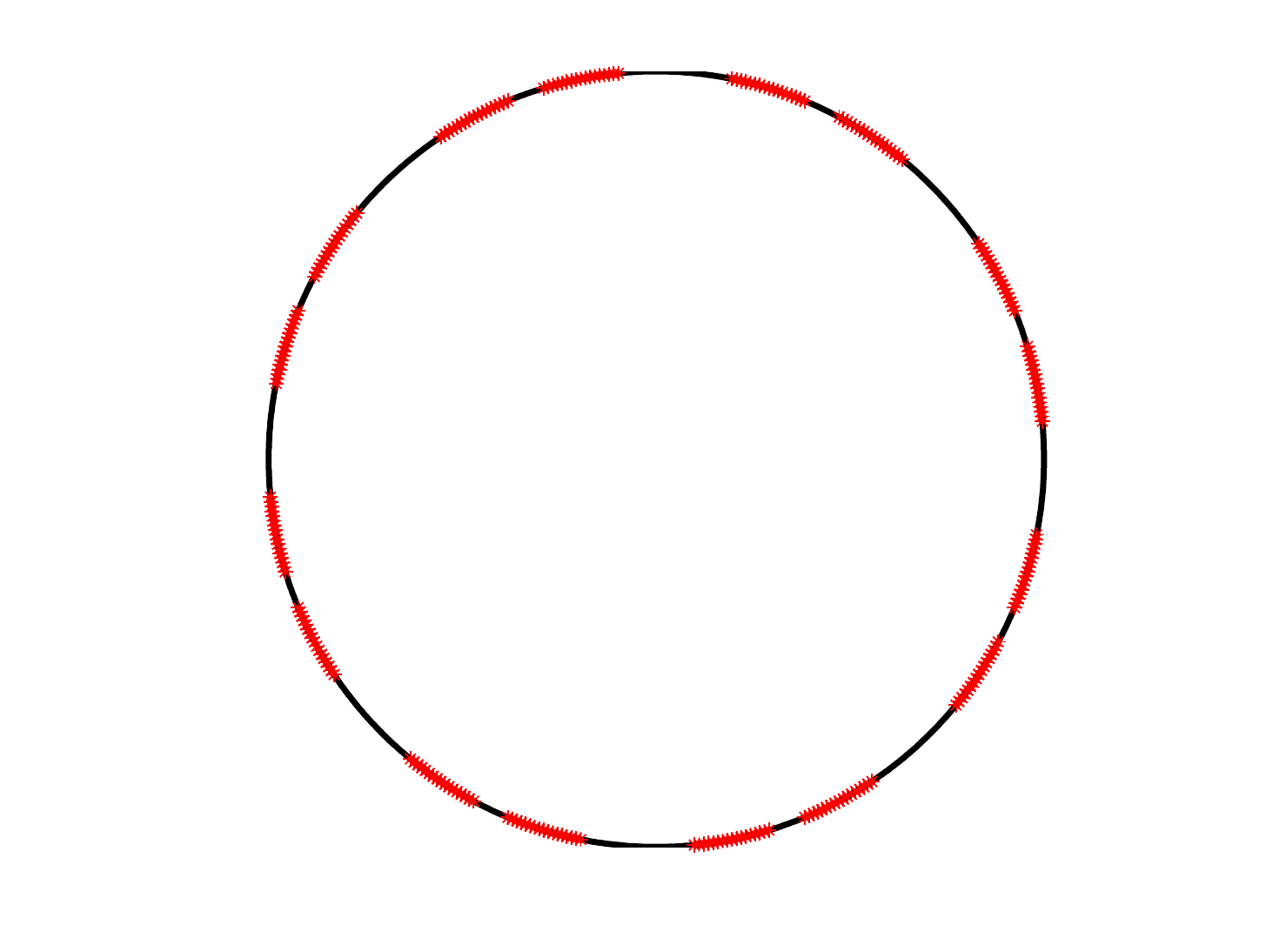}\label{fig:electrodeb}}
  \caption{Electrode arrangement for the computational domain $\Omega$ and for imperfectly known electrode positions
  (used in Example~\ref{exam4}). The curved
  segments in red denote electrodes.}\label{fig:electrode}
\end{figure}

\subsection{Perfectly Known Boundary}\label{sec:numer-perf}

First, we consider the case of a known boundary.
\begin{exam}\label{exam1}
Consider three square inclusions: the two inclusions on the top share the same spectral profile $s_1(\omega)$,
and the one on the bottom has a second spectral profile $s_2(\omega)$; cf.\ Fig.~\ref{fig:exam1ia} for an
illustration. In the experiments, we consider the following two cases:
\begin{itemize}
    \item[(i)] The spectral profiles are $s_1(\omega)=0.1\omega+0.1$
       and $s_2(\omega)=0.2\omega$;
    \item[(ii)] The spectral profiles  are $s_1(\omega)=0.1\omega+0.1$ and $s_2(\omega)=0.02\omega$.
\end{itemize}
In either case, we   take  $Q=3$ frequencies, $\omega_1=0$, $\omega_2=0.5$ and $\omega_3=1$.
\end{exam}

\begin{figure}[htp!]
\subfloat[true $\delta\sigma_k$s]{\includegraphics[width=0.3\columnwidth]{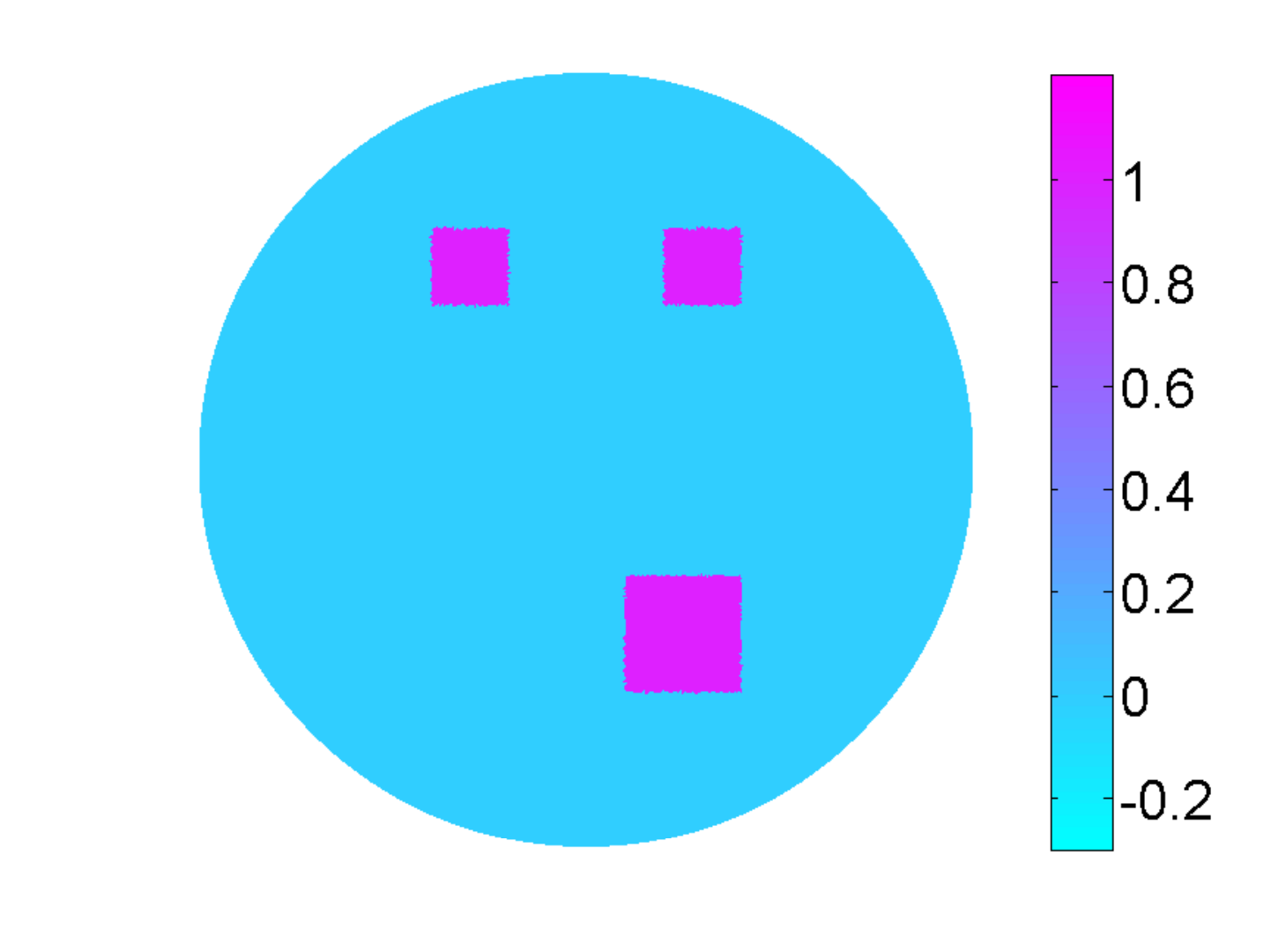}\label{fig:exam1ia}}\hfill
\subfloat[recovered $\delta\sigma_1$]{\includegraphics[clip,width=0.3\columnwidth]{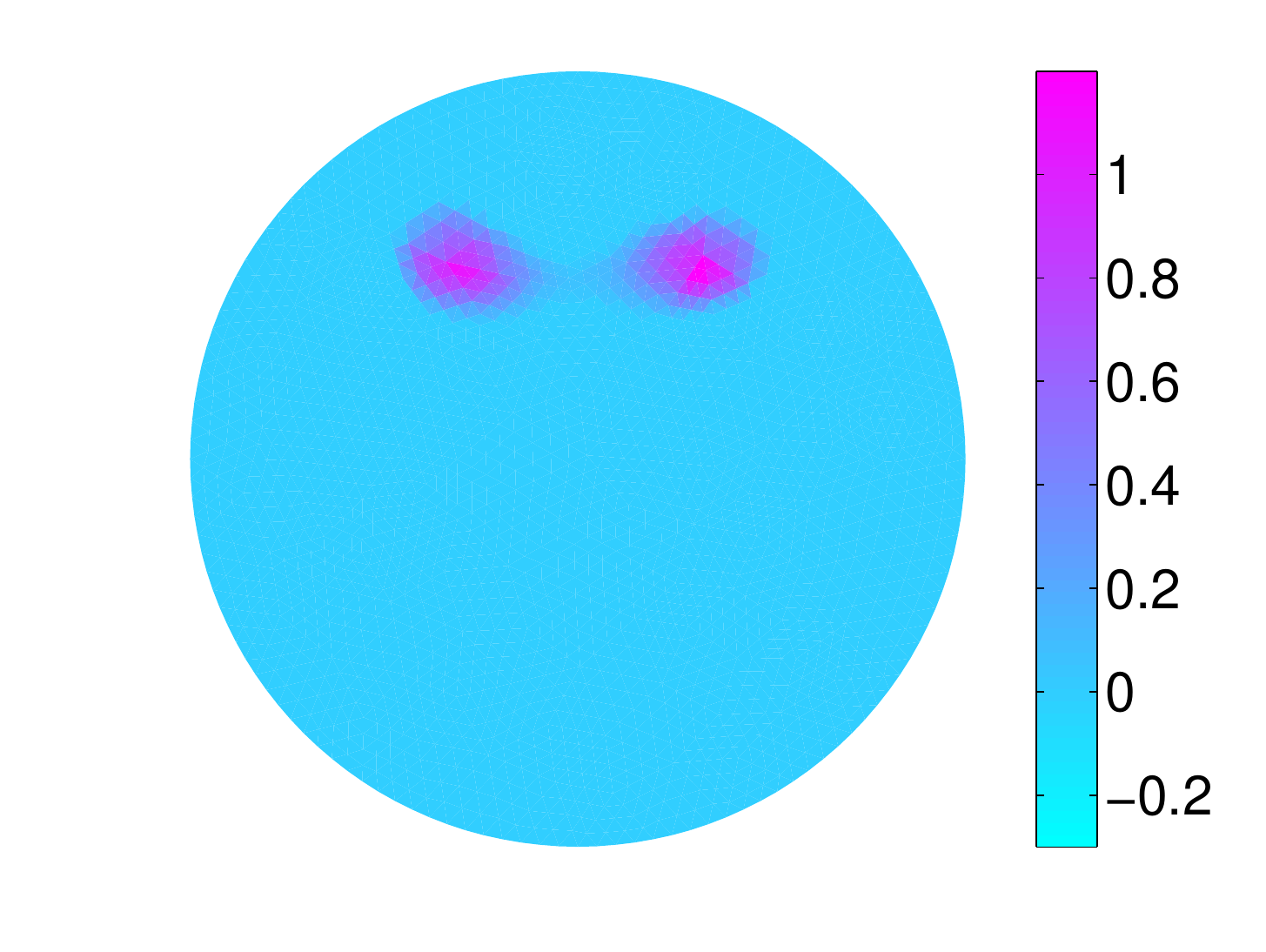}\label{fig:exam1ib}}\hfill
\subfloat[recovered $\delta\sigma_2$]{\includegraphics[width=0.3\columnwidth]{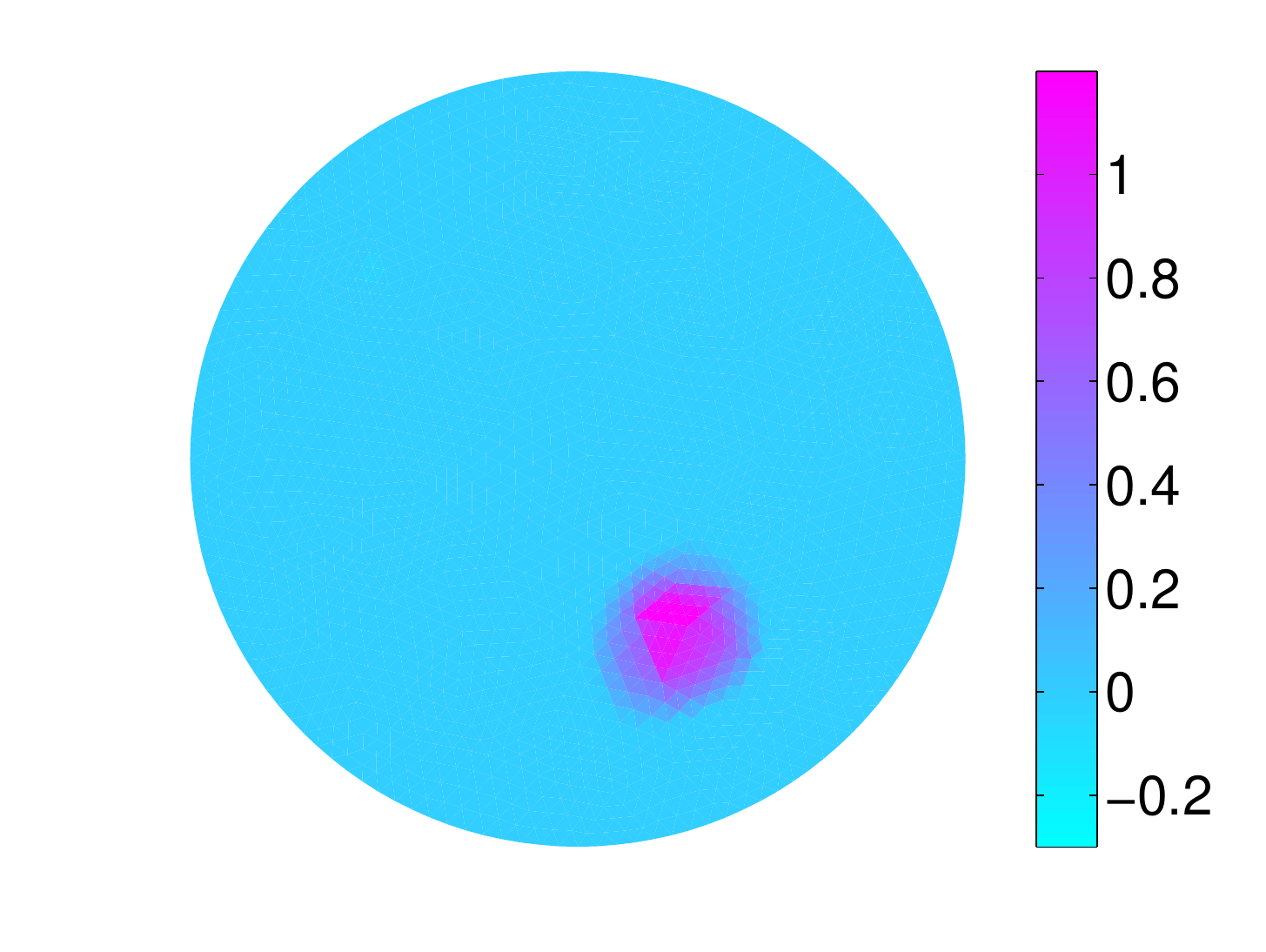}\label{fig:exam1ic}}
\caption{Numerical results for Example~\ref{exam1}(i) with $1\%$ data noise, and fully known $s_k(\omega)$s. The recoveries
are obtained using the direct approach.}
\label{fig:exam1i}
\end{figure}

The results for Example~\ref{exam1} with $\epsilon=1\%$ data noise are shown in Figs.~\ref{fig:exam1i}
and \ref{fig:exam1ii} for cases (i) and (ii), respectively. In case (i), the two frequencies have about the same
magnitude, and the matrix $S$ is nonsingular. The direct approach in Section~\ref{sub:case(a)} separates  the two
sets of inclusions  well thanks to the spectral incoherence. The recovery is very localized within a clean
background, the supports match closely the true ones (and are clearly disjoint from each other) and their magnitudes
 are well retrieved. The latter observation is a distinct feature of the proposed GIST
in  Section~\ref{sect:sparsity}. Hence, for known incoherent profiles, the
inclusions can be fairly recovered. It is noteworthy that our approach is insensitive to model parameters:
see Fig.~\ref{fig:exam1i-z} for the recoveries  with different contact impedance constants. Case (ii) is
similar, except that the variation of
$s_2(\omega)$ is much smaller. The preceding observations remain largely valid, except that the
inclusion $\delta\sigma_2$ has minor spurious oscillations. This is attributed to
the presence of data noise: the noise is comparable with inclusion contributions. Hence, for
the accurate recovery, the data should be reasonably
accurate.

\begin{figure}[htp!]
\subfloat[recovered $\delta\sigma_1$]{\includegraphics[trim = 1cm 0.5cm 0.5cm .5cm, clip=true,width = .23\textwidth]{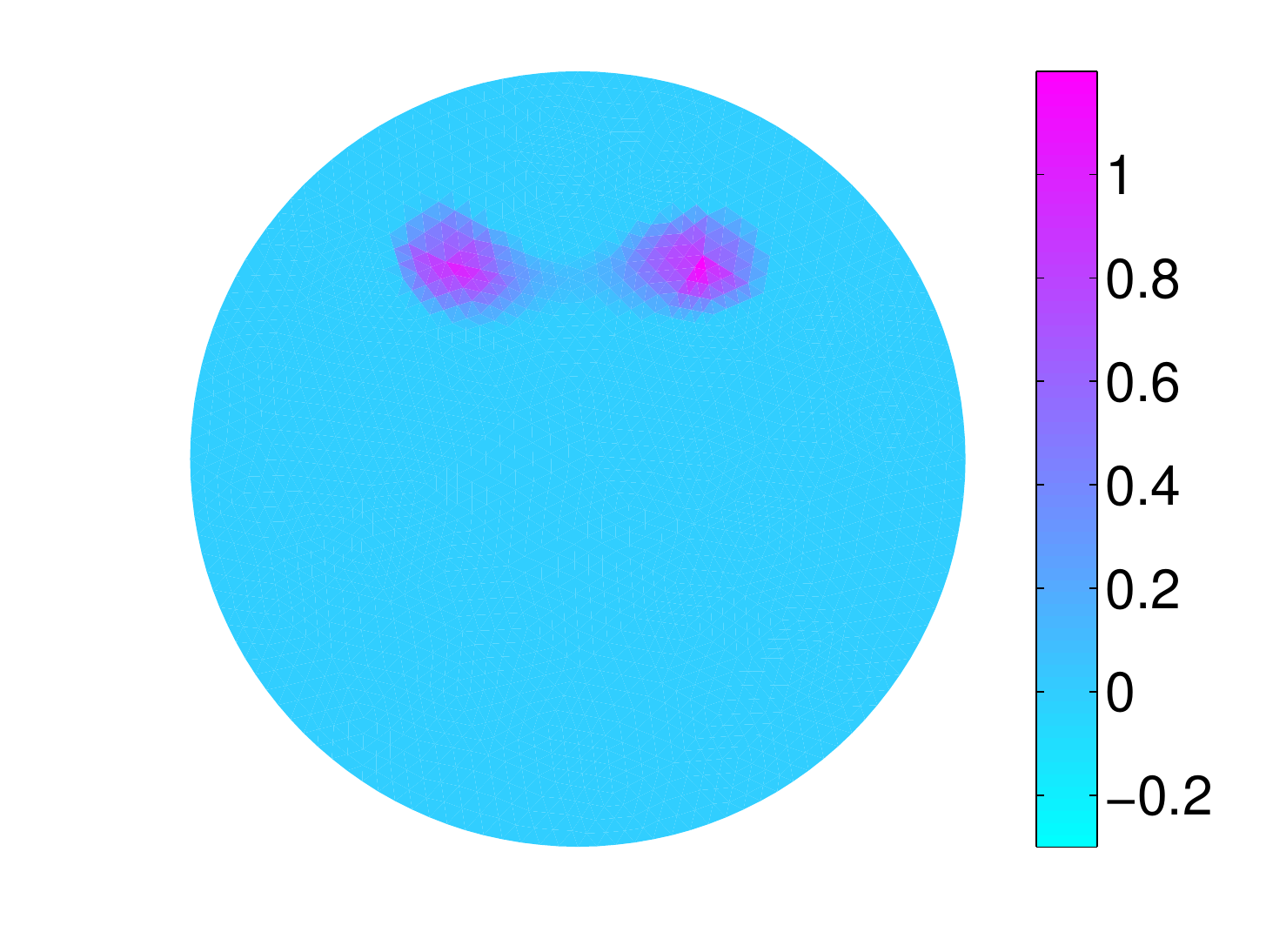}}\hfill
\subfloat[recovered $\delta\sigma_2$]{\includegraphics[trim = 1cm 0.5cm 0.5cm .5cm, clip=true,width = .23\textwidth]{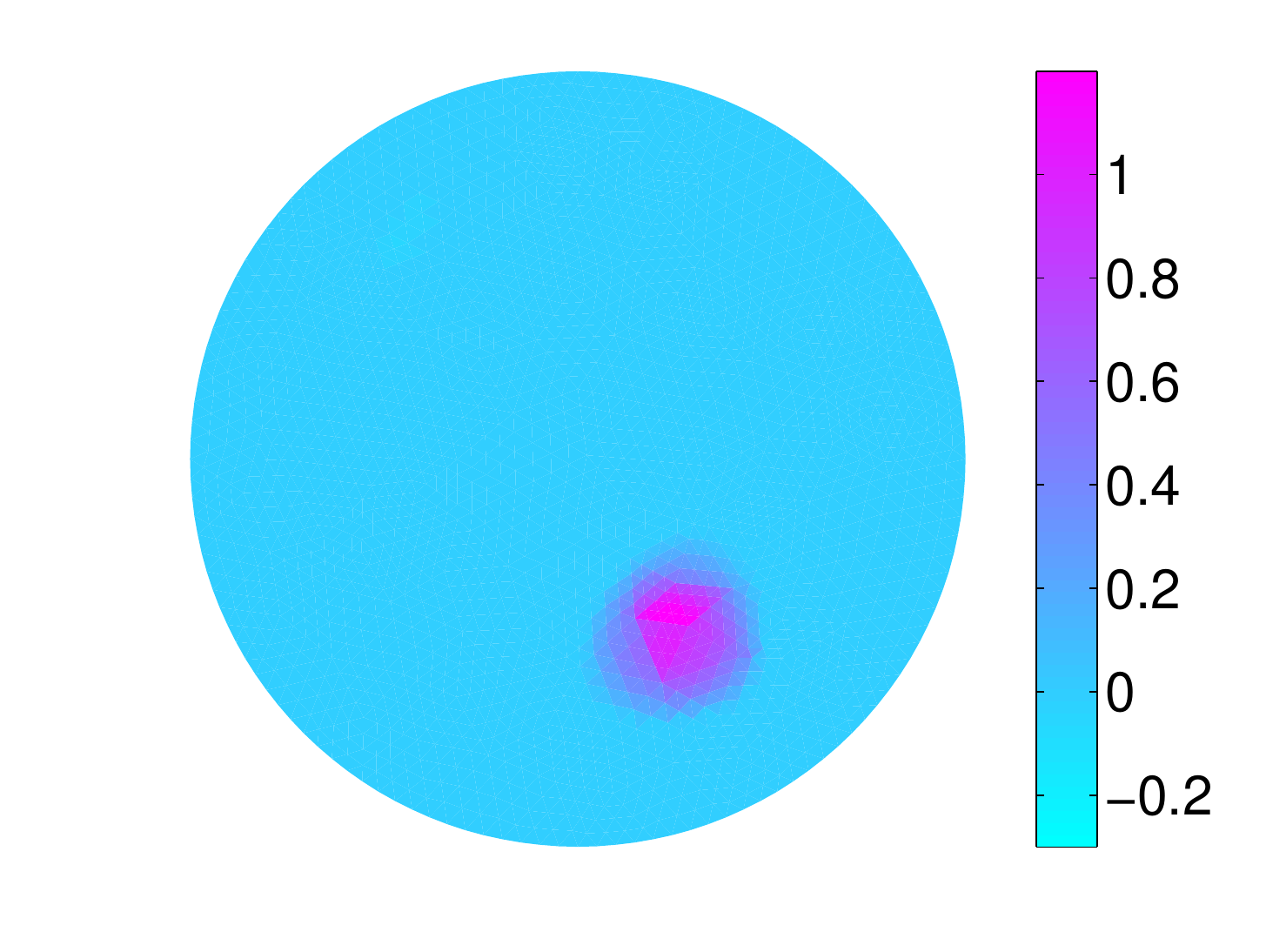}}\hfill
\subfloat[recovered $\delta\sigma_1$]{\includegraphics[trim = 1cm 0.5cm 0.5cm .5cm, clip=true,width = .23\textwidth]{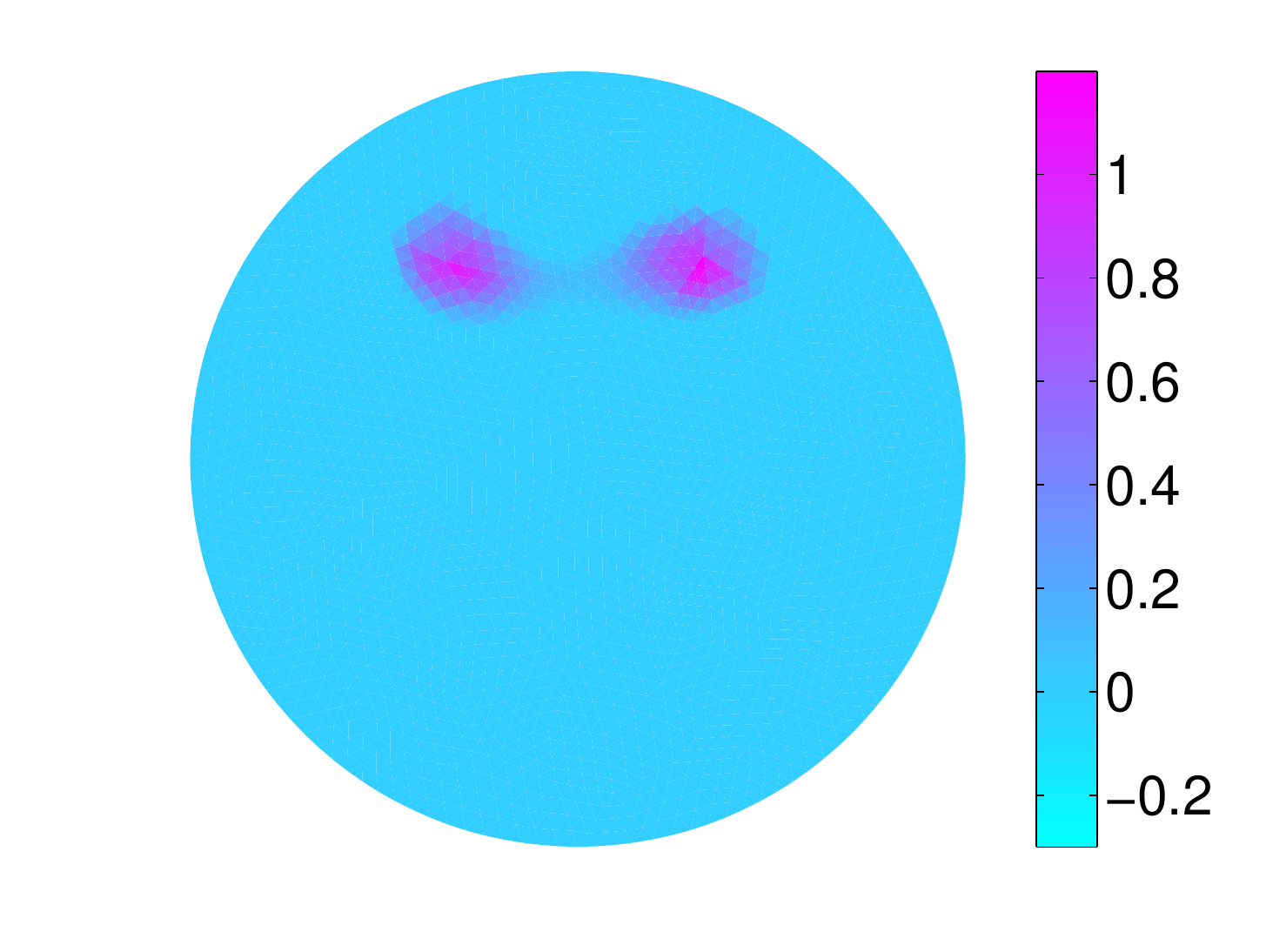}}\hfill
\subfloat[recovered $\delta\sigma_2$]{\includegraphics[trim = 1cm 0.5cm 0.5cm .5cm, clip=true,width = .23\textwidth]{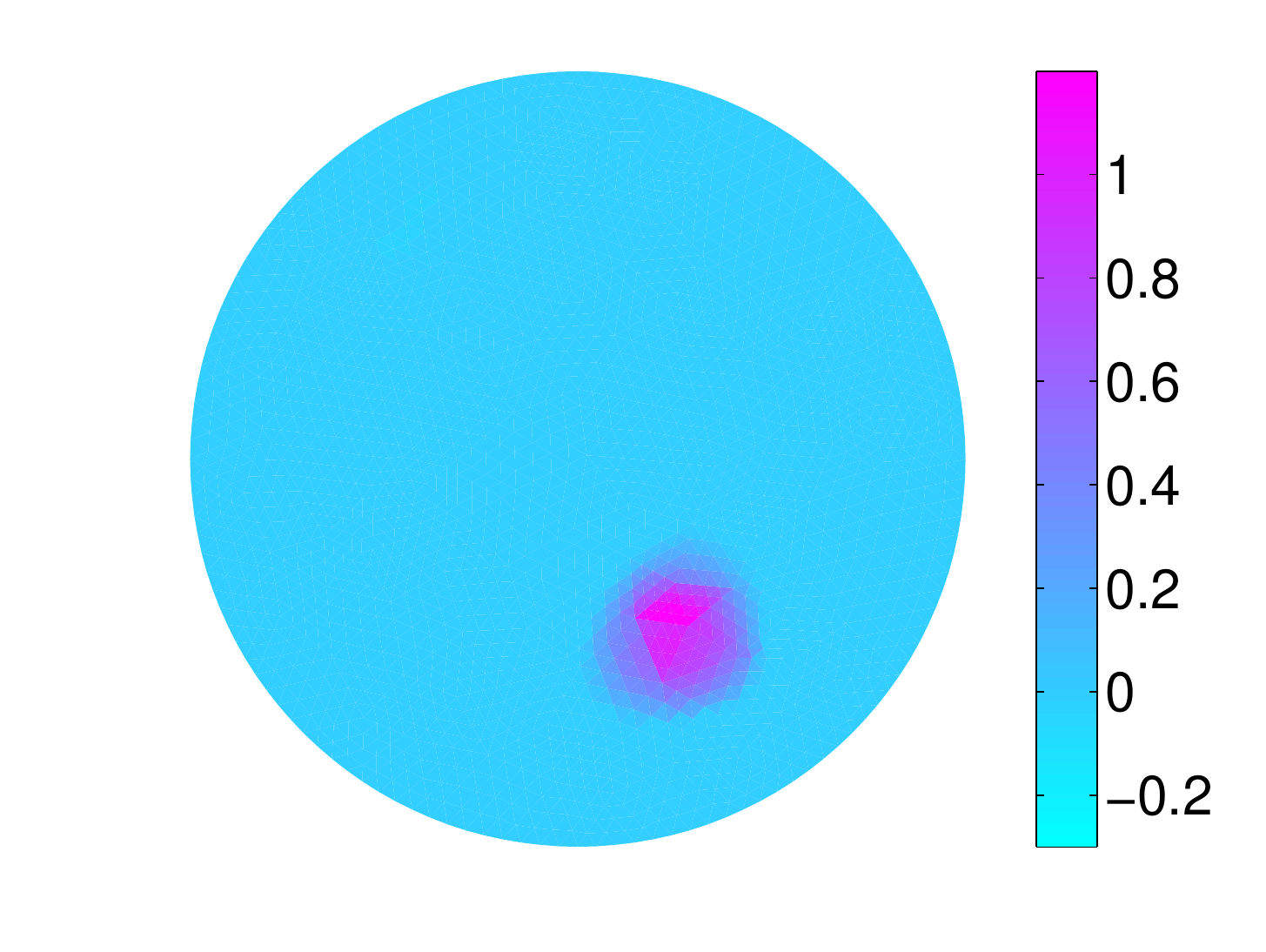}}
\caption{Numerical results for Example~\ref{exam1}(i) with different contact impedance
constants, $1\%$ data noise, and fully known $s_k(\omega)$s. The recoveries in (a) and (b) are obtained with $z_j=0.1$,
$j=1,\ldots,E$, and those in (c) and (d) with $z_j=0.01$, $j=1,\ldots,E$, by the direct approach.}
\label{fig:exam1i-z}
\end{figure}

The well-conditioning of $S$ implies the robustness of the direct approach
with respect to spectral profile perturbations, cf. Section~\ref{sub:case(a)}.
We present in Fig.~\ref{fig:exam1i-perturbed} the recoveries using imprecise
spectral profiles for Example~\ref{exam1}(i), where the spectral matrix is perturbed by additive Gaussian noise with a zero mean and
standard deviation proportional to the entry magnitude. Even only with three frequencies, the
recoveries remain stable up to 20\% spectral perturbation, indicating the
robustness of the approach, concurring with the findings in \cite{MaloneSato:2015}.

\begin{figure}[htp!]
\subfloat[recovered $\delta\sigma_1$]{\includegraphics[trim = 1cm 0.5cm 0.5cm .5cm, clip=true,width = .23\textwidth]{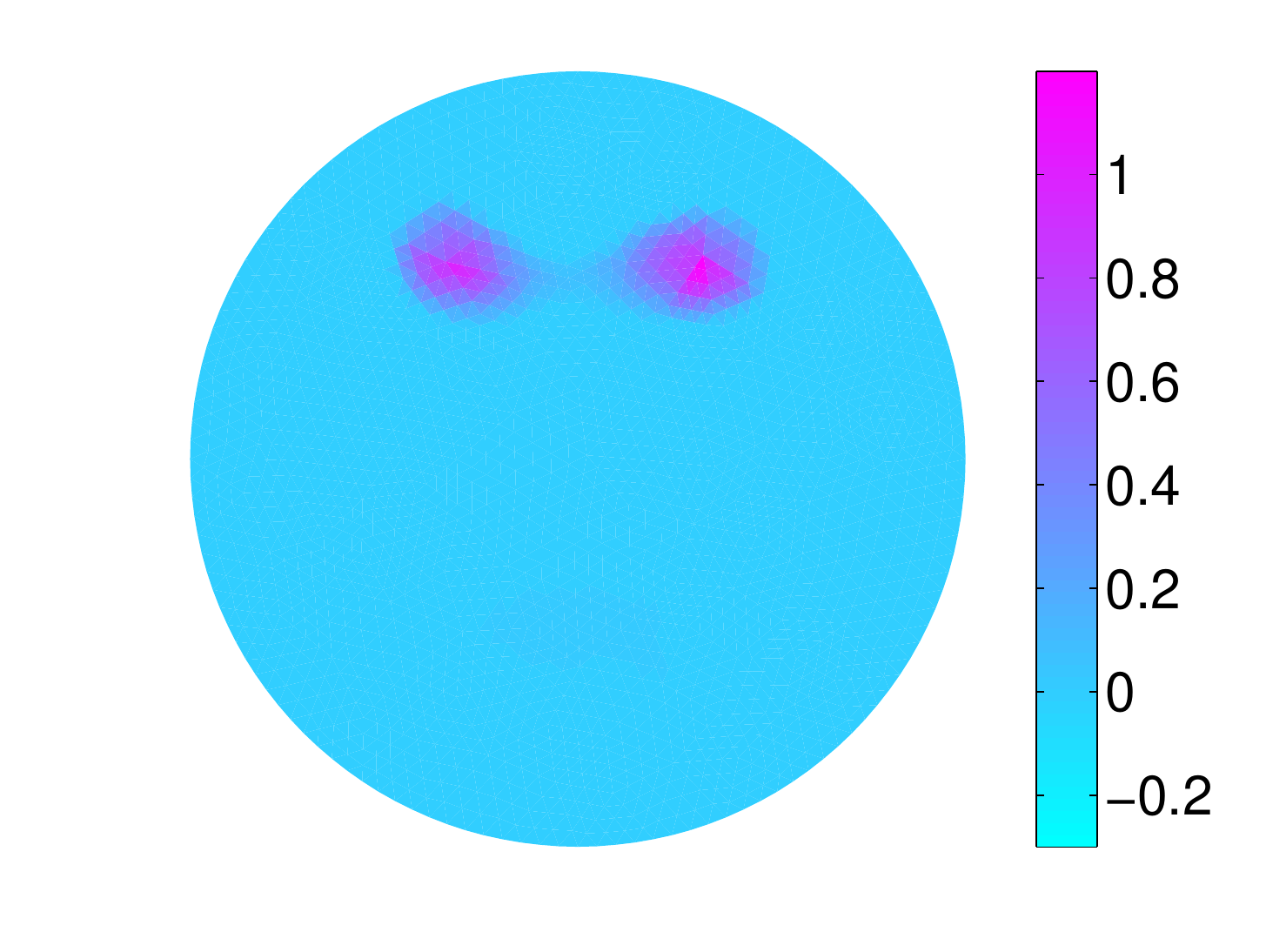}}\hfill
\subfloat[recovered $\delta\sigma_2$]{\includegraphics[trim = 1cm 0.5cm 0.5cm .5cm, clip=true,width = .23\textwidth]{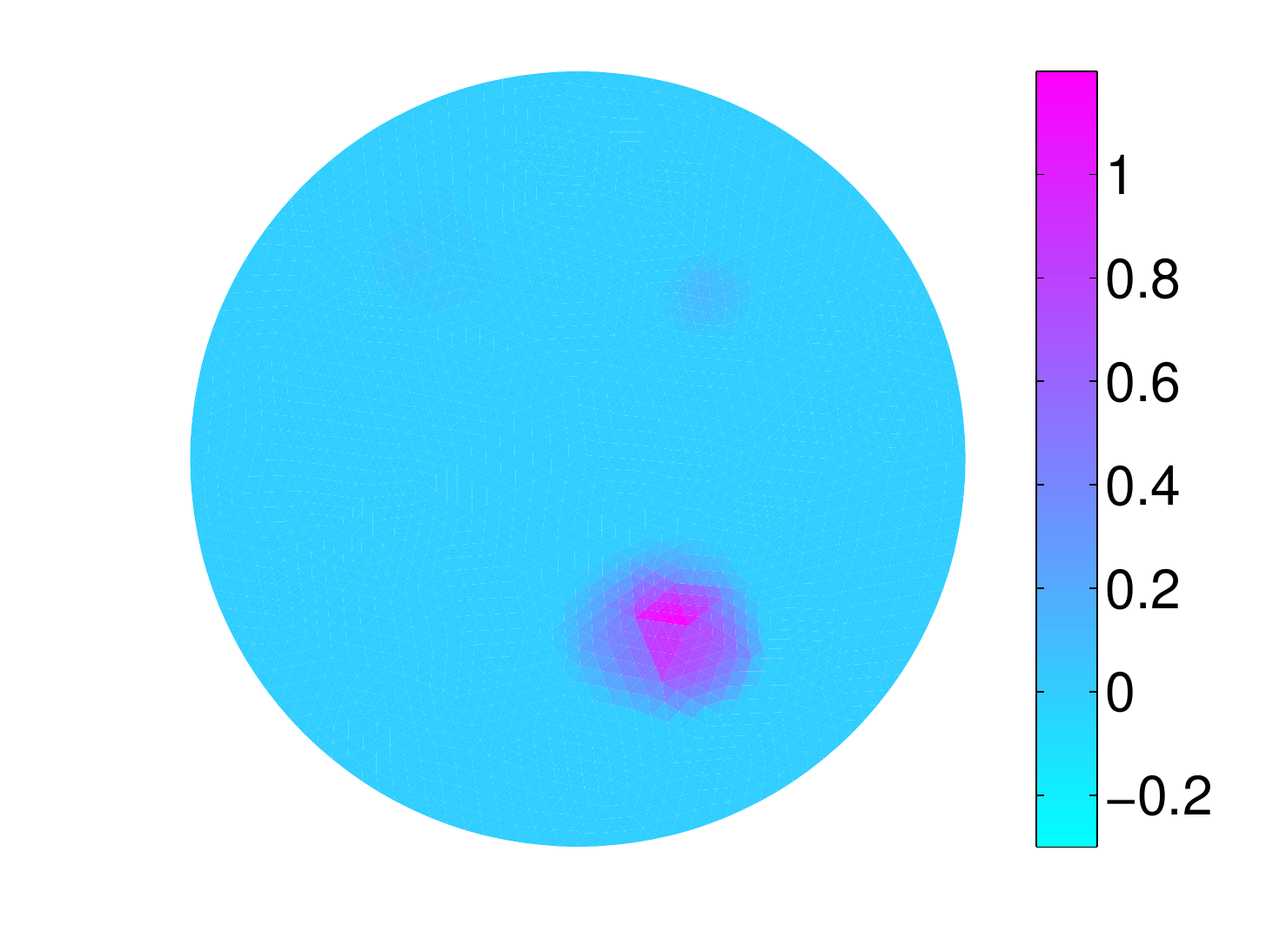}}\hfill
\subfloat[recovered $\delta\sigma_1$]{\includegraphics[trim = 1cm 0.5cm 0.5cm .5cm, clip=true,width = .23\textwidth]{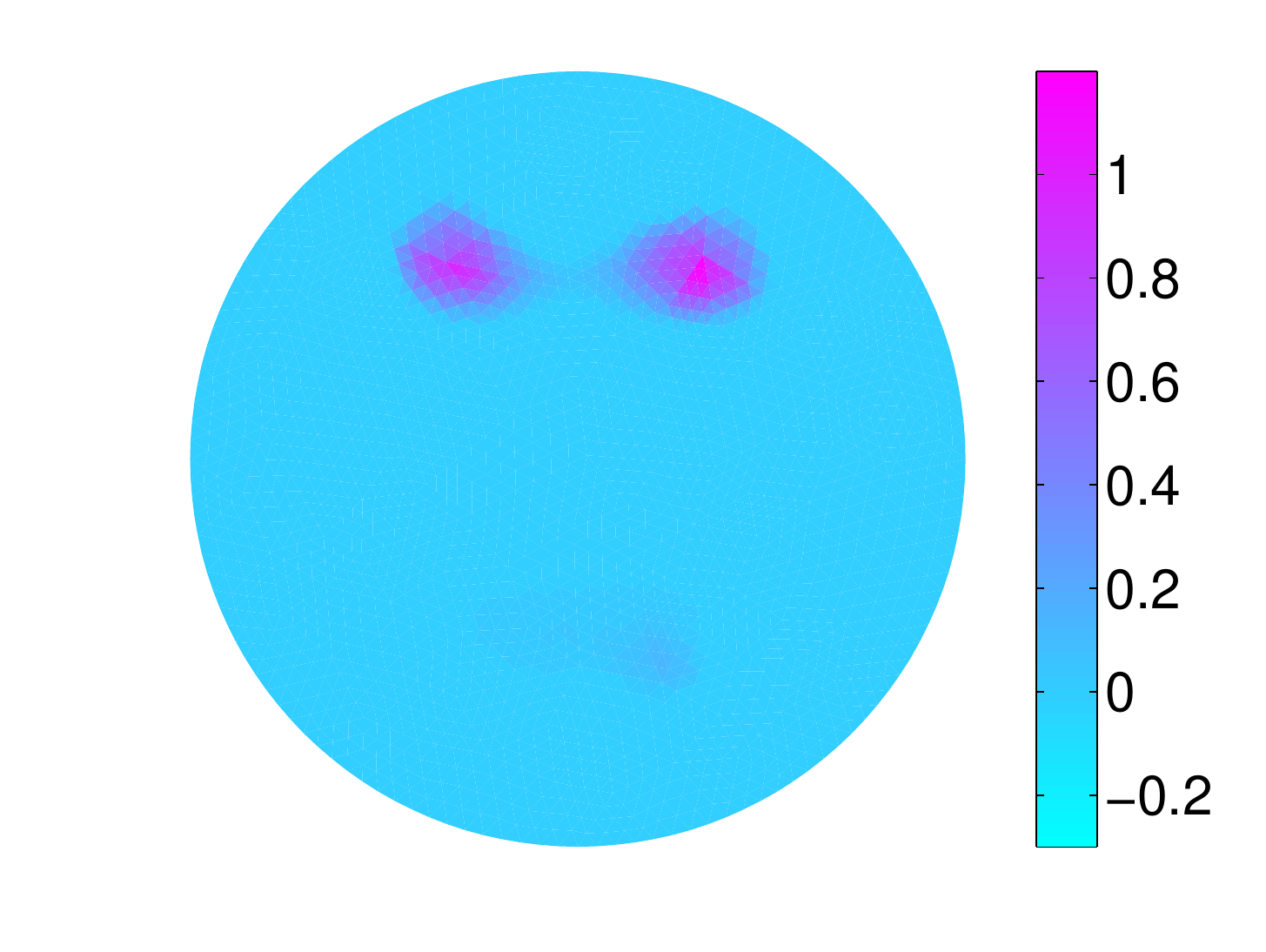}}\hfill
\subfloat[recovered $\delta\sigma_2$]{\includegraphics[trim = 1cm 0.5cm 0.5cm .5cm, clip=true,width = .23\textwidth]{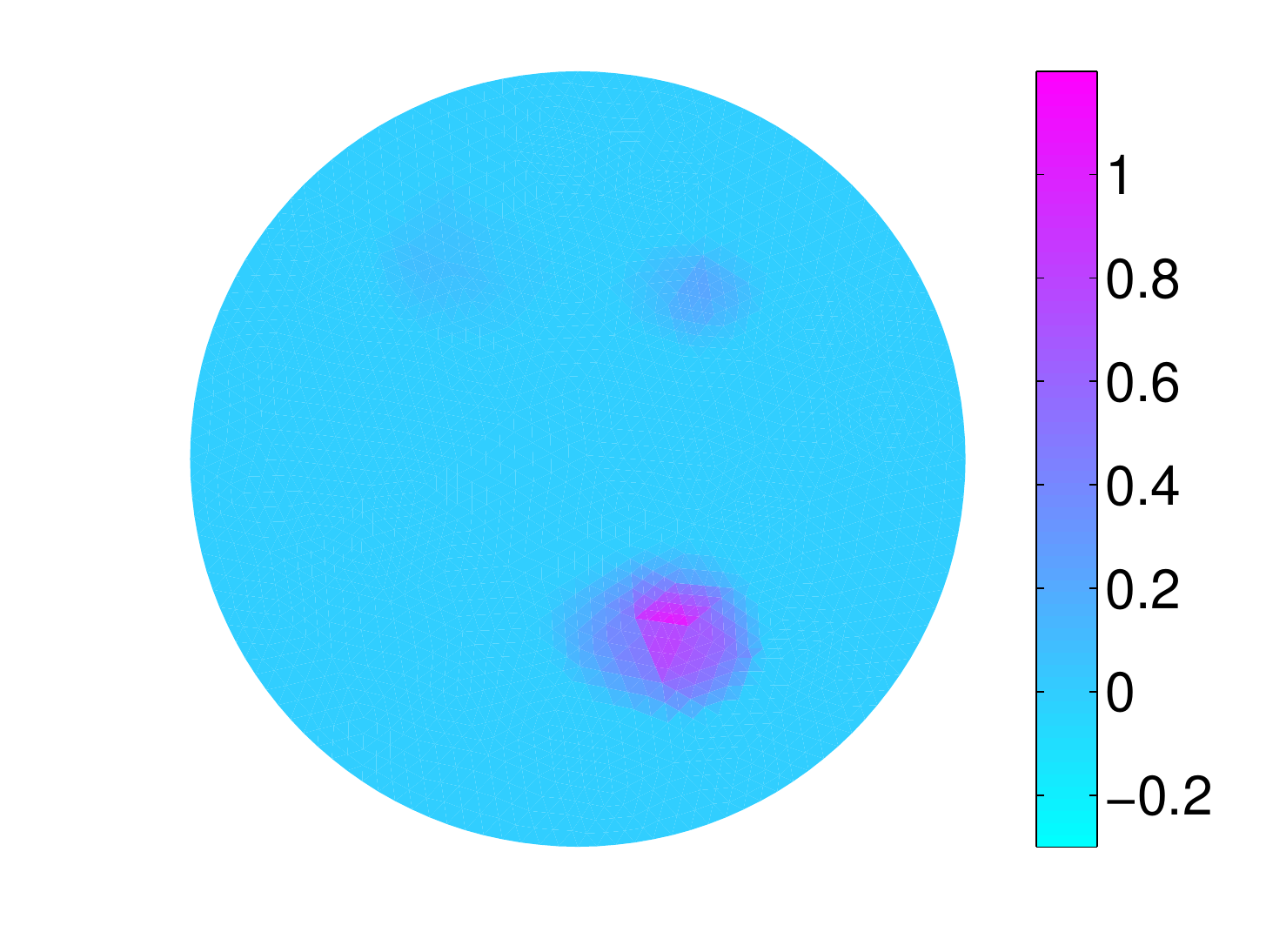}}
\caption{Numerical results for Example~\ref{exam1}(i) with $1\%$ data noise, and imprecisely known
$s_k(\omega)$s. The recoveries in (a) and (b) are obtained with $S$
perturbed by additive Gaussian noise with mean zero and standard deviation $10\%$ of the
entry magnitude, and those in (c) and (d) with $20\%$ noise, both by the direct approach.}
\label{fig:exam1i-perturbed}
\end{figure}

\begin{figure}[htp!]
\subfloat[true $\delta\sigma_k$s]{\includegraphics[trim = 1cm 0.5cm 0.5cm .5cm, clip=true,width = .23\textwidth]{exam1_inc_true.eps}\label{fig:exam1iia}}\hfill
\subfloat[recovered $\delta\sigma_1$]{\includegraphics[trim = 1cm 0.5cm 0.5cm .5cm, clip=true,width = .23\textwidth]{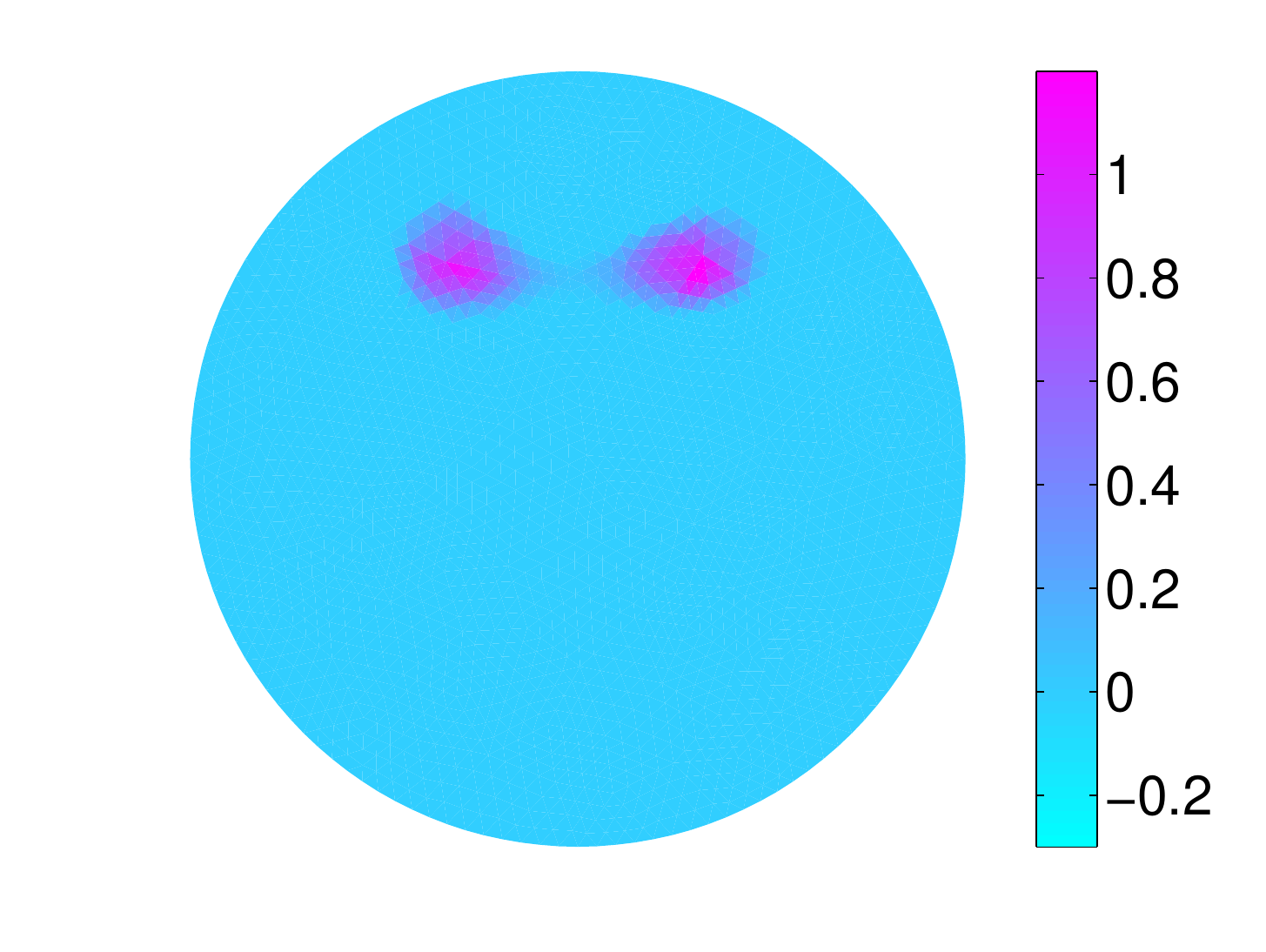}\label{fig:exam1iib}}\hfill
\subfloat[recovered $\delta\sigma_2$]{\includegraphics[trim = 1cm 0.5cm 0.5cm .5cm, clip=true,width = .23\textwidth]{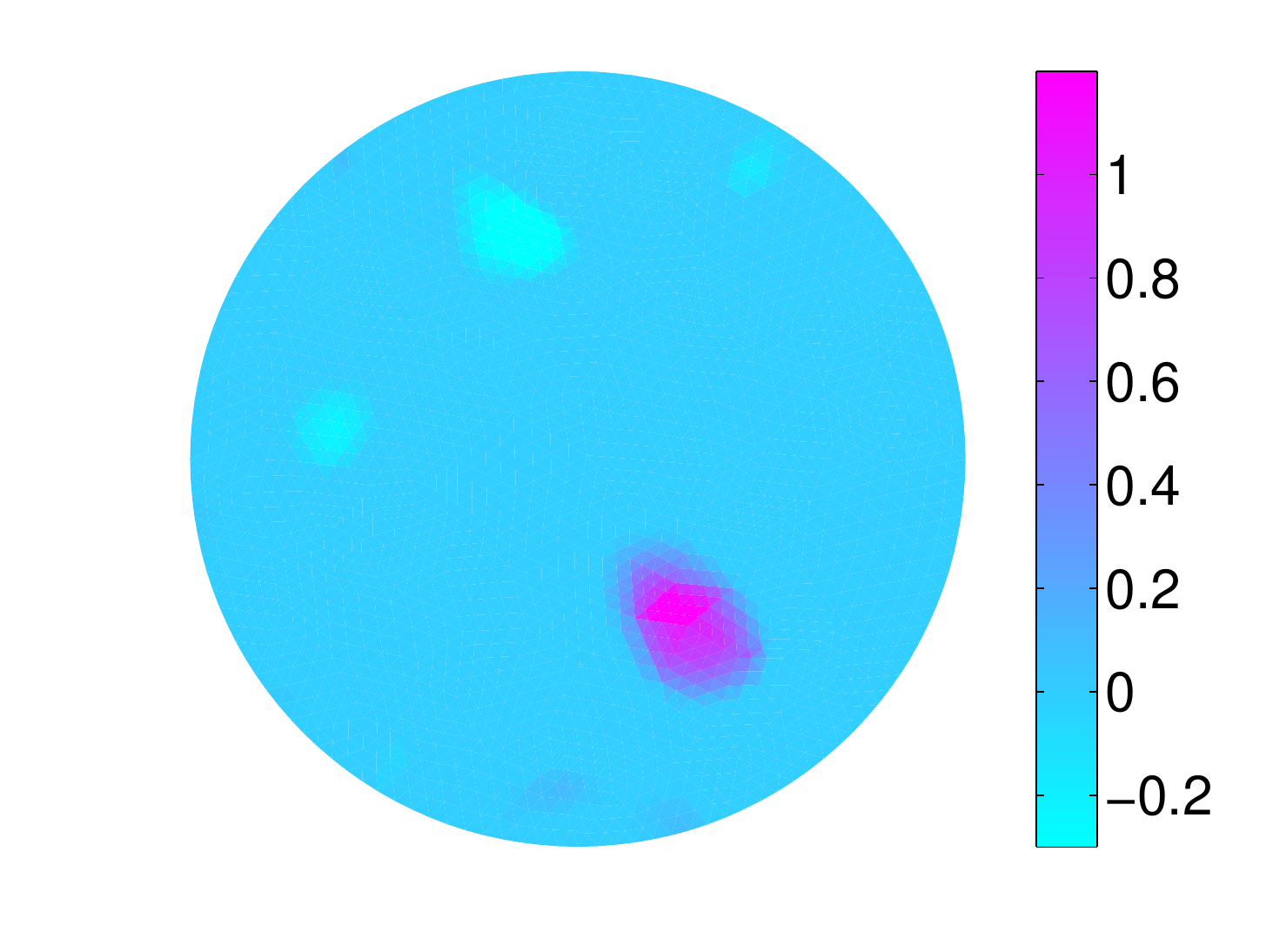}\label{fig:exam1iic}}\hfill
\subfloat[recovered $\delta\sigma_1$]{ \includegraphics[trim = 1cm 0.5cm 0.5cm .5cm, clip=true,width = .23\textwidth]{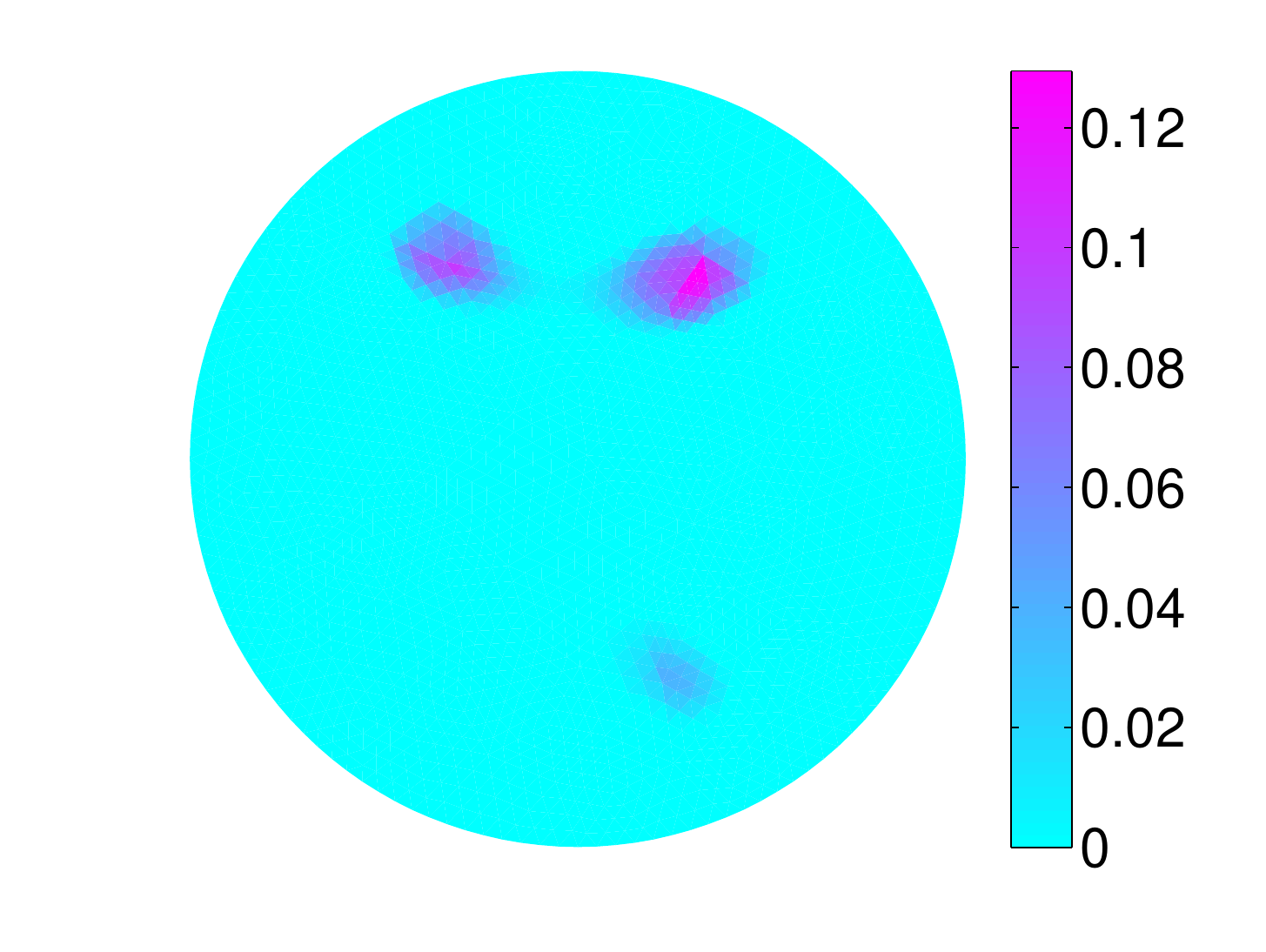}\label{fig:exam1iid}}
\caption{Numerical results for Example~\ref{exam1}(ii) with $1\%$ data noise. The recoveries in
(b) and (c) are obtained with known $s_k(\omega)$s using direct approach, and that in (d)
without knowing $s_k(\omega)$s, using difference imaging.}
\label{fig:exam1ii}
\end{figure}

Throughout, we have assumed a fixed discretization for the linearized model. Due to the ill-posed
nature of the problem, the recovery may vary with the discretization, due to discretization
error, apart from the data noise (and inherent linearization error).
With Example \ref{exam1}(i), we briefly illustrate the dependence of the relative error of the recovery
on the mesh size $h$ used for the inversion (cf.\ \eqref{eqn:cem_lin}), the noise level
$\epsilon$ and the regularization parameter $\alpha$. The results are shown in Table \ref{tab:exam1i}
for various combinations of $h$, $\epsilon$, and $\alpha$.
Just as expected, the relative error increases with $h$ and $\epsilon$, and the convergence is relatively
independent of $\alpha$ within this range. A detailed convergence analysis with respect to the discretization parameter for
EIT imaging with Tikhonov regularization can be found in \cite{GehreJinLu:2014,Rondi:2016}.

\begin{table}[hbt!]
\centering
\caption{The relative errors for Example \ref{exam1}(i) with various mesh size $h$,
noise level $\epsilon$ and regularization parameter $\alpha$, where
$h_1=$1.27e-1, $h_2=$6.36e-2 and $h_3=$3.18e-2. The error is computed with respect to the reference
solution, which is the recovery on the finest mesh.}\label{tab:exam1i}
\begin{tabular}{c|ccc|ccc|ccc}
\hline
  & \multicolumn{3}{|c|}{$\alpha=$5-3} & \multicolumn{3}{c|}{$\alpha=$1e-2} & \multicolumn{3}{c}{$\alpha=$5e-2} \\
    \cline{2-10}
 $\epsilon$ & $h_1$ & $h_2$ & $h_3$ & $h_1$ & $h_2$ & $h_3$ & $h_1$ & $h_2$ & $h_3$\\
   \hline
   1e-3 & 5.57e-2 & 2.72e-2 & 1.22e-2 & 7.80e-2 & 3.85e-2 & 1.74e-2 & 2.42e-1 & 1.26e-1 & 5.87e-2 \\
   3e-3 & 5.61e-2 & 2.75e-2 & 1.23e-2 & 7.83e-2 & 3.87e-2 & 1.75e-2 & 2.42e-1 & 1.26e-1 & 5.88e-2 \\
   1e-2 & 5.77e-2 & 2.82e-2 & 1.27e-2 & 7.95e-2 & 3.94e-2 & 1.78e-2 & 2.42e-1 & 1.26e-1 & 5.89e-2 \\
   \hline
  \end{tabular}
\end{table}

Since $s_2^\prime(\omega)$ is small in Example~\ref{exam1}(ii), we also illustrate difference
imaging in  Section~\ref{subsub:b1}. The recovery of the first
set of inclusions, in the absence of the knowledge of $s_k$s, is shown in  Fig.~\ref{fig:exam1iid}.
The recoveries are free from spurious oscillations. This shows the capability of difference
imaging for spectral profiles with substantially different dependence on $\omega$.

\begin{exam}\label{exam2}
Consider three rectangular inclusions on the top left, top right and bottom of the disk with spectral
profiles $s_1(\omega)$, $s_2(\omega)$ and $s_3(\omega)$, respectively, cf.\ Figure~\ref{fig:exam2aa} for an illustration.
In the experiments, we consider the following two cases:
\begin{itemize}
    \item[(i)] The spectral profiles  are  $s_1(\omega)=0.2\omega+0.2$, $s_2(\omega)=0.1\omega^2$ and $s_3(\omega)=0.2\omega + 0.1$;
    \item[(ii)] The spectral profiles are $s_1(\omega)=0.02\omega+0.02$, $s_2(\omega)=0.1\omega^2$ and $s_3(\omega)=0.2\omega+0.1$.
\end{itemize}
In either case, we take three frequencies, $\omega_1=0$, $\omega_2=0.5$ and $\omega_3=1$.
\end{exam}

The numerical results for Example~\ref{exam2}(i) and \ref{exam2}(ii) are shown in Figs.~\ref{fig:exam2a} and
\ref{fig:exam2b}, respectively. If all three $s_k(\omega)$s are known, the use of three frequencies
yields almost perfect separation of the inclusions using the direct method:
the recovered inclusions are well clustered with correct
supports and magnitudes. For Example~\ref{exam2}(ii), $s_1(\omega)$
is much smaller, and thus the recovered $\delta\sigma_1$ is more susceptible to noise,
whereas the other two are more stable.

The results in Fig.~\ref{fig:exam2b} indicate that with known $s_2(\omega)$
and $s_3(\omega)$ and unknown $s_1(\omega)$, since  $s_1^\prime(\omega)$ is small, difference imaging
can recover accurately both the magnitude and support of  $\delta\sigma_2$ and
$\delta\sigma_3$. These observations fully confirm the discussions in  Section~\ref{sub:case(b)}.

\begin{figure}[hbt!]
  \centering
    \subfloat[true $\delta\sigma_k$s]{\label{fig:exam2aa}\includegraphics[trim = 1.5cm .5cm .5cm .5cm, clip=true,width = .23\textwidth]{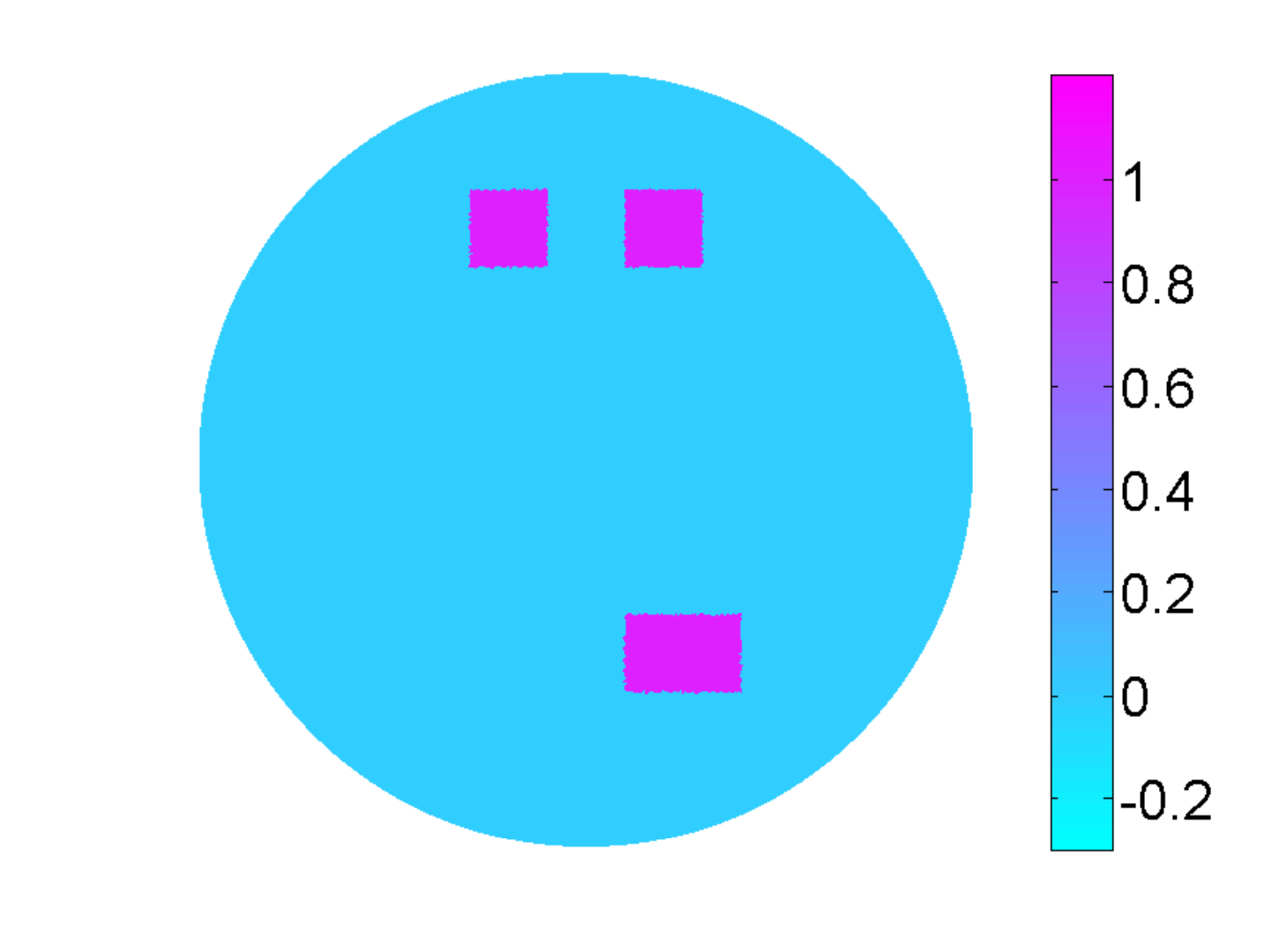}}\hfill
    \subfloat[recovered $\delta\sigma_1$]{\label{fig:exam2ab}\includegraphics[trim = 1.5cm .5cm .5cm .5cm, clip=true,width = .23\textwidth]{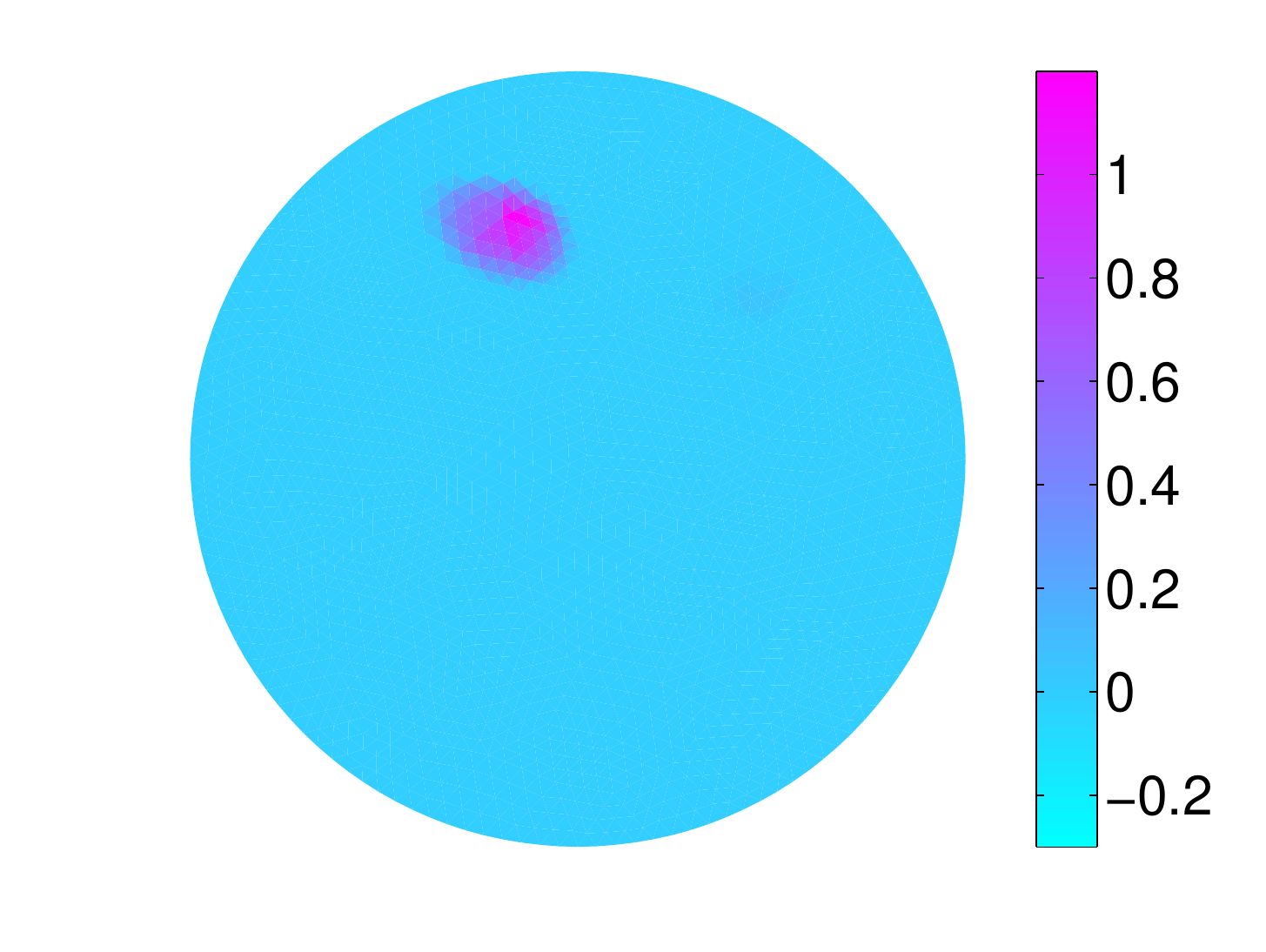}}\hfill
    \subfloat[recovered $\delta\sigma_2$]{\label{fig:exam2ac}\includegraphics[trim = 1.5cm .5cm .5cm .5cm, clip=true,width = .23\textwidth]{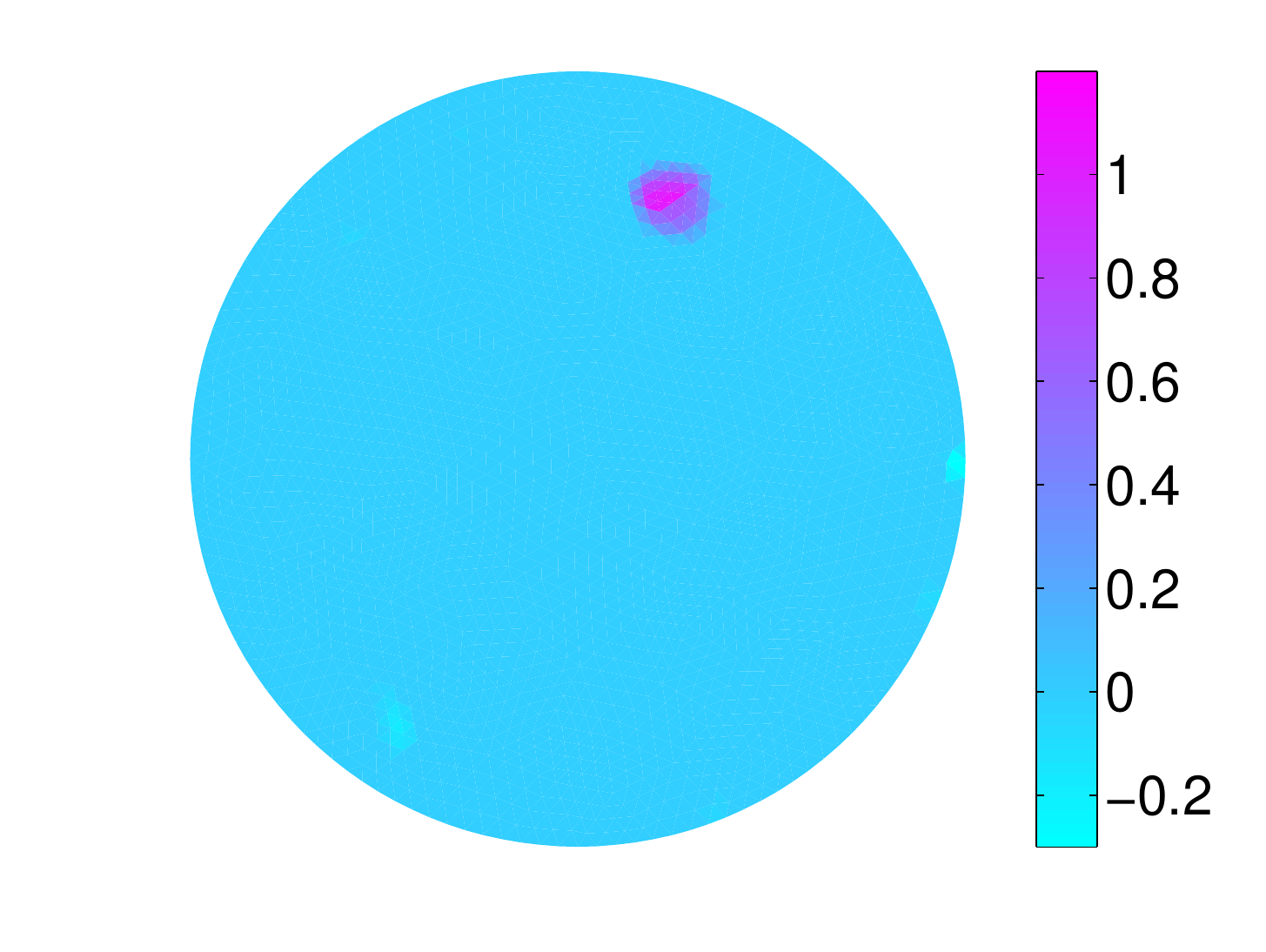}}\hfill
    \subfloat[recovered $\delta\sigma_3$]{\label{fig:exam2ad}\includegraphics[trim = 1.5cm .5cm .5cm .5cm, clip=true,width = .23\textwidth]{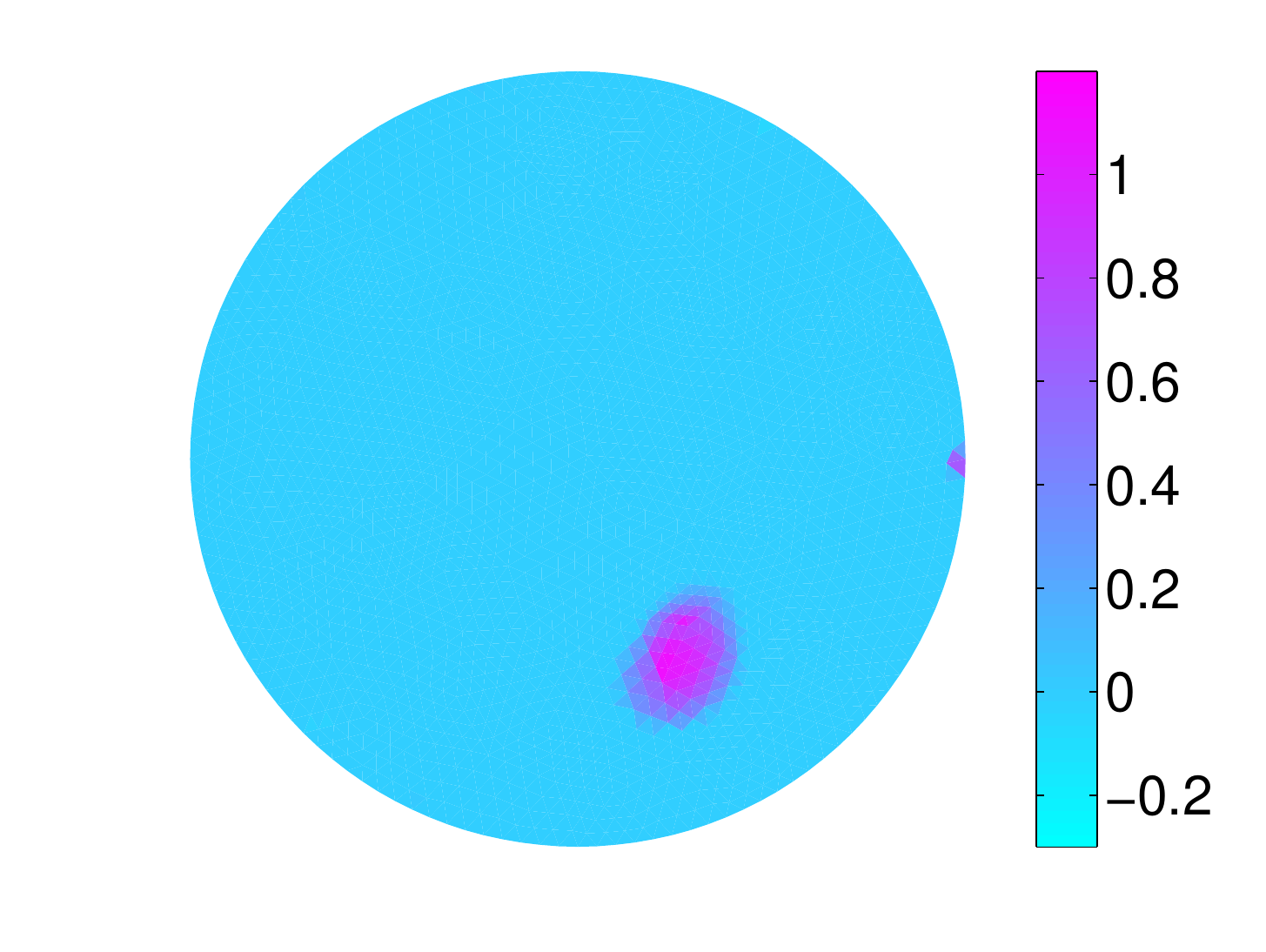}}
  \caption{Numerical results for Example~\ref{exam2}(i) with $1\%$ data noise, with fully known $s_k(\omega)$s. The recoveries
  are obtained by the direct approach.}\label{fig:exam2a}
\end{figure}

\begin{figure}[hbt!]
  \centering
    \subfloat[true $\delta\sigma_k$s]{\includegraphics[width = .3\textwidth]{exam2_inc_true.eps}}\hfill
    \subfloat[recovered $\delta\sigma_1$]{\includegraphics[width = .3\textwidth]{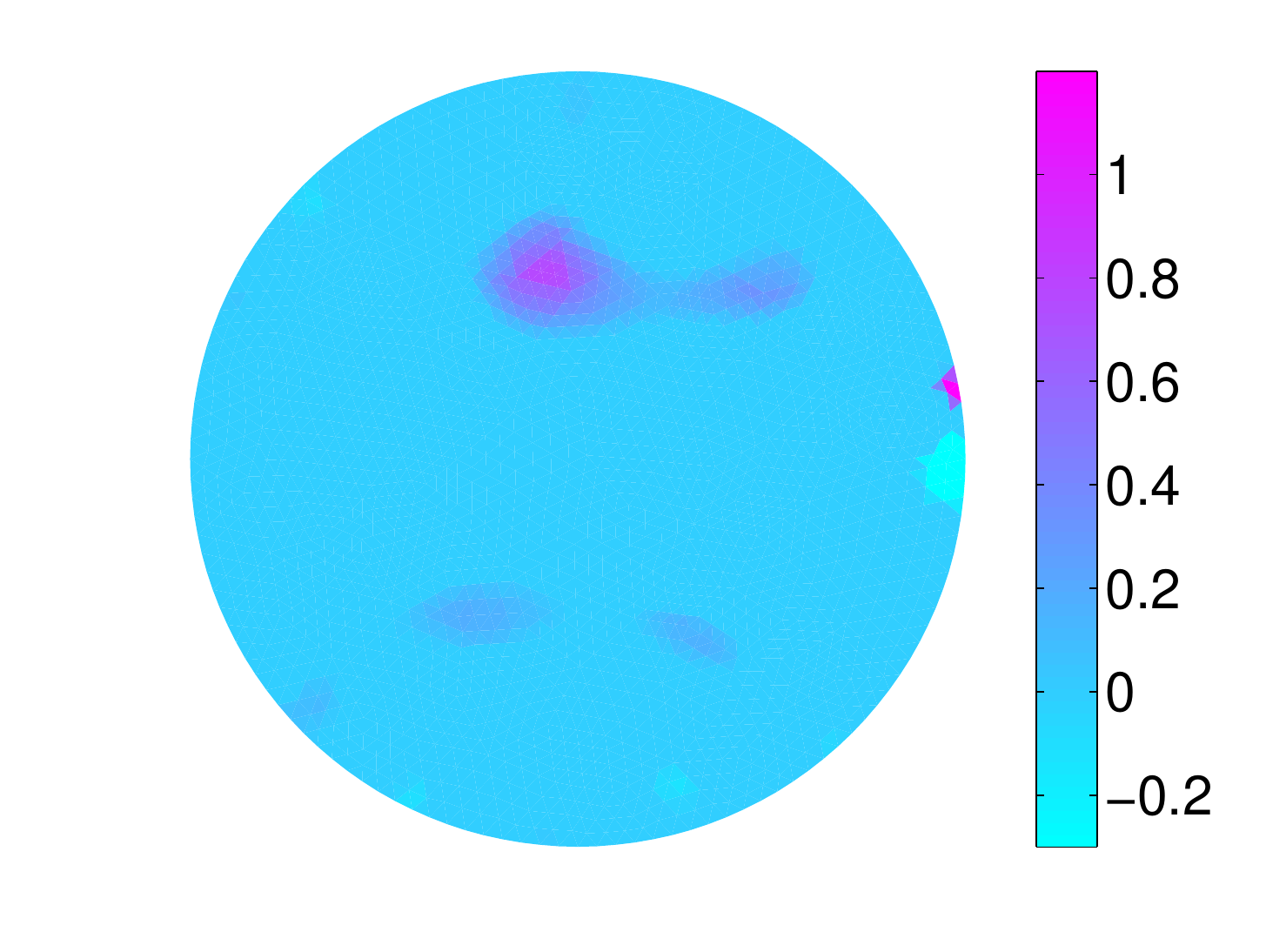}}\hfill
    \subfloat[recovered $\delta\sigma_2$]{\includegraphics[width = .3\textwidth]{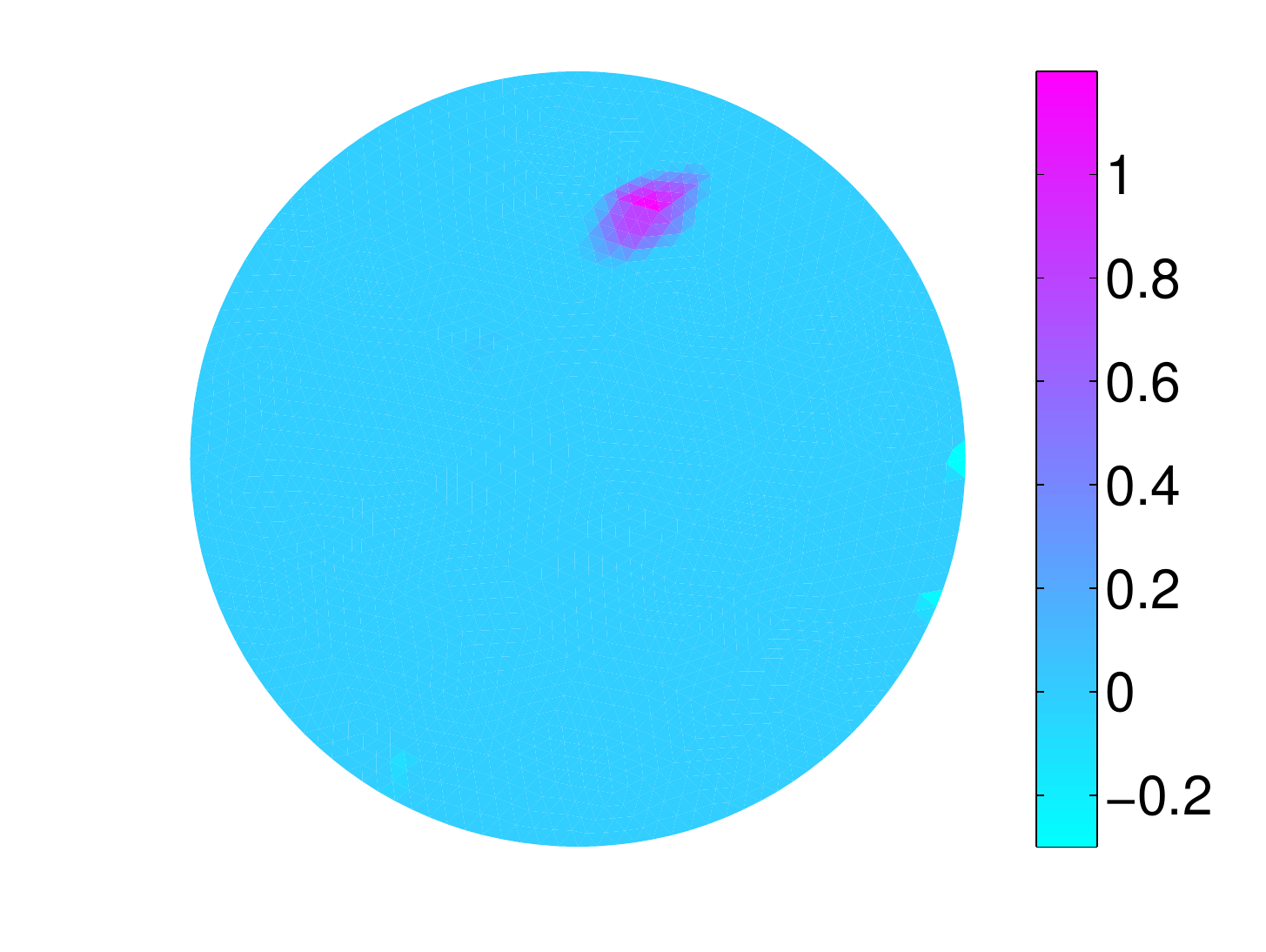}}\hfill

    \subfloat[recovered $\delta\sigma_3$]{\includegraphics[width = .3\textwidth]{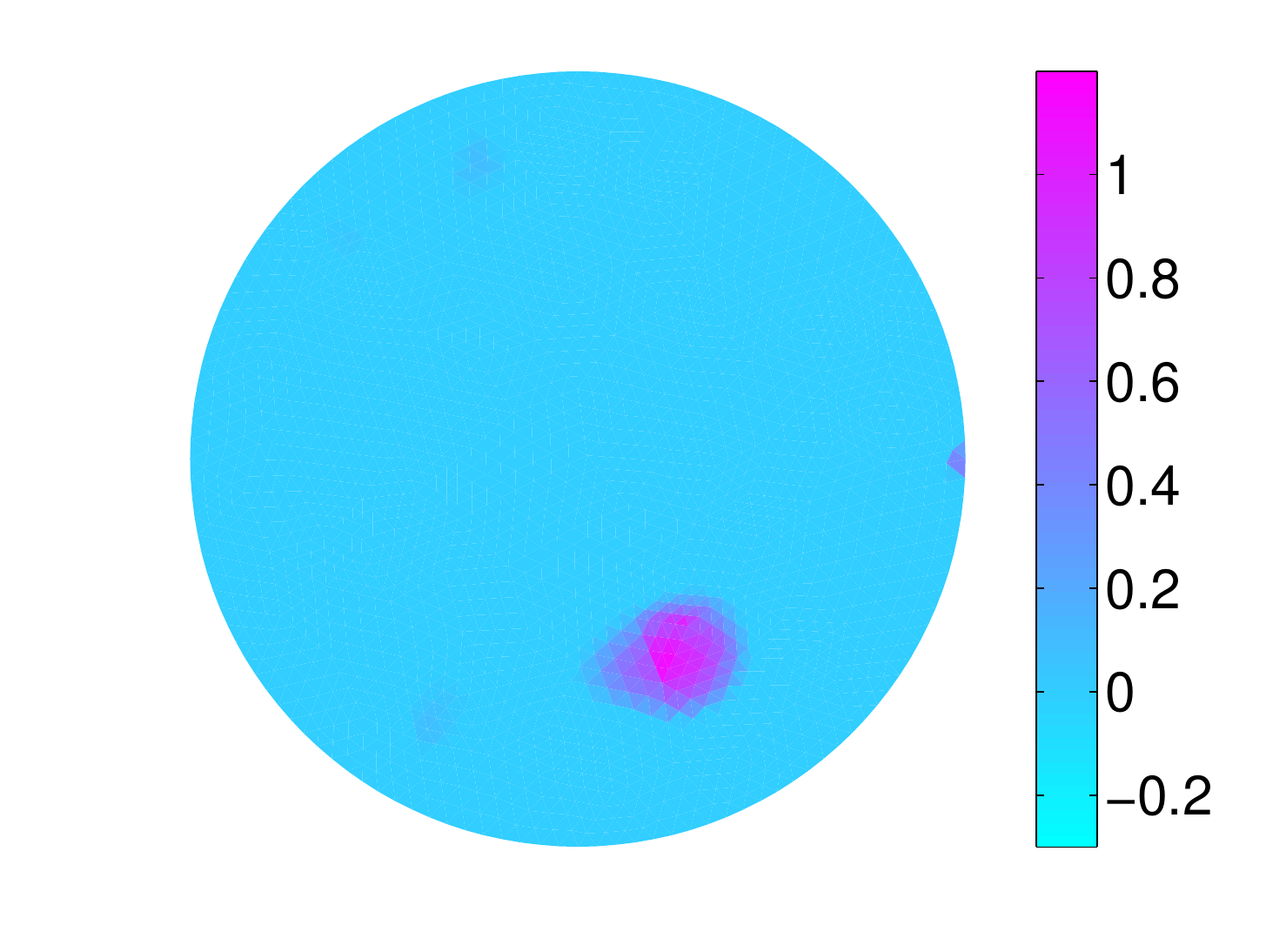}}\hfill
    \subfloat[recovered $\delta\sigma_2$]{\includegraphics[width = .3\textwidth]{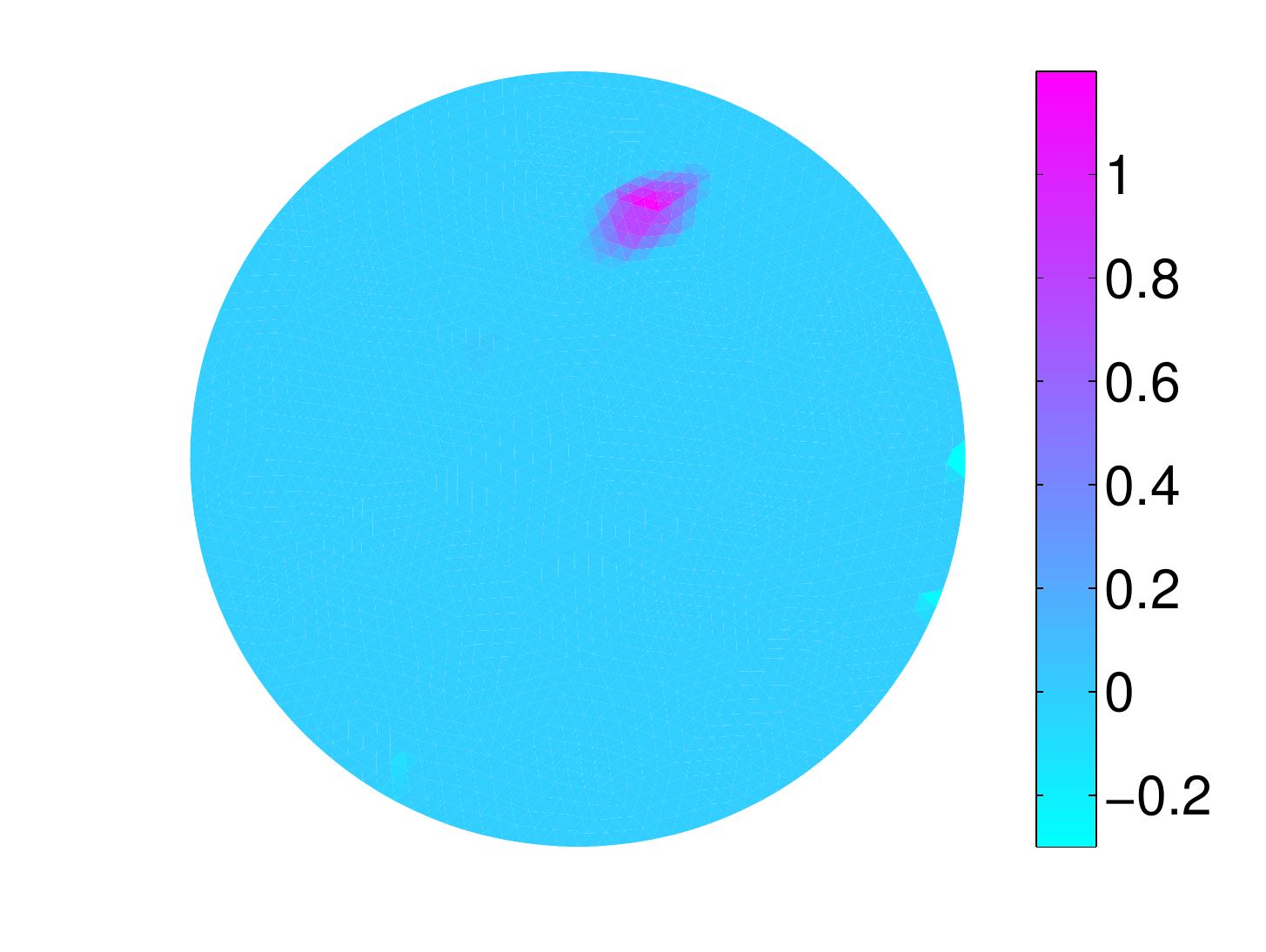}}\hfill
    \subfloat[recovered $\delta\sigma_3$]{\includegraphics[width = .3\textwidth]{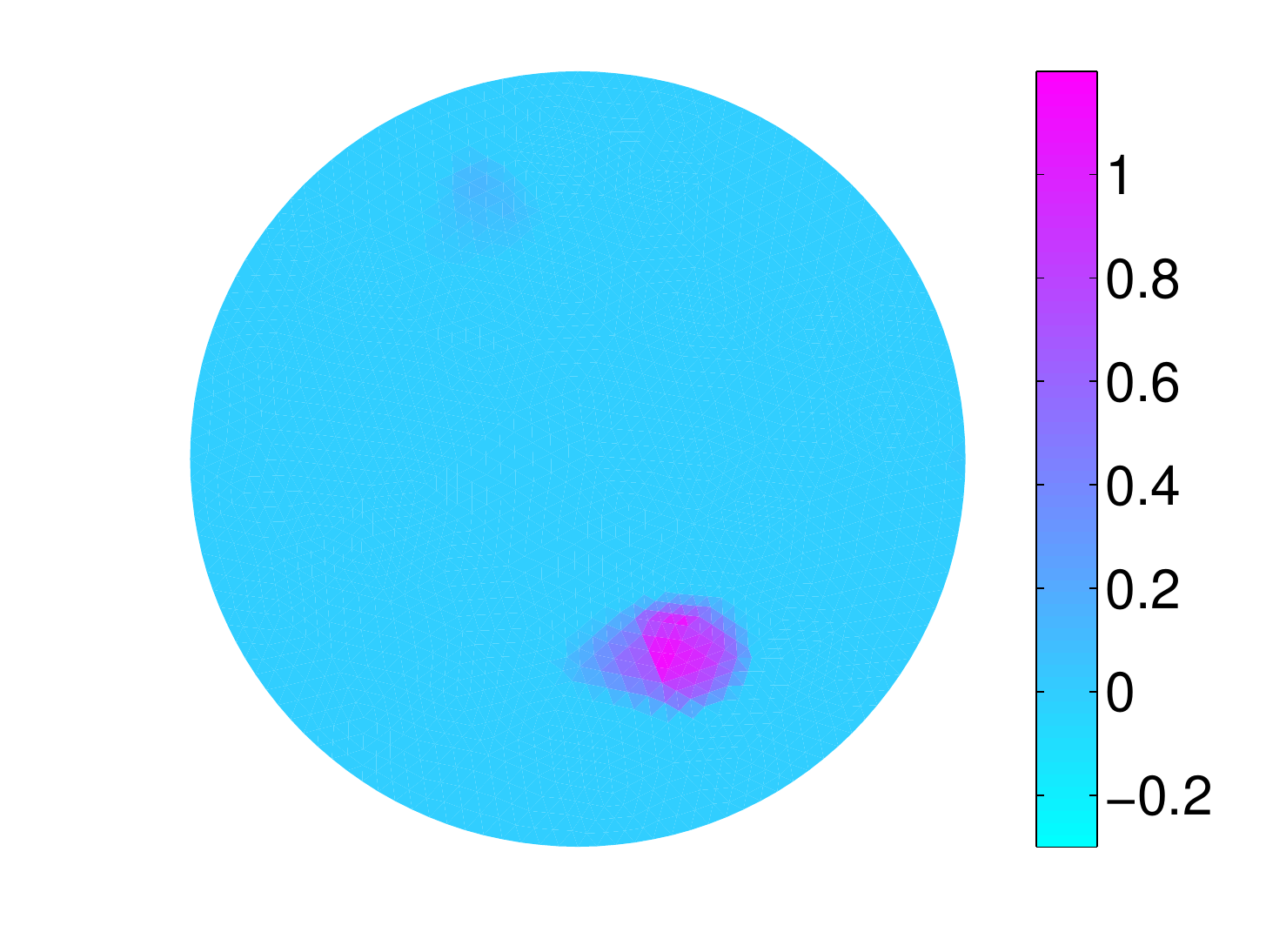}}\hfill
  \caption{Numerical results for Example~\ref{exam2}(ii) with $1\%$ data noise. Here (b)-(d) are the recoveries
  with fully known $s_k(\omega)$ and obtained by the direct approach, while for (e)
  and (f) only $s_2(\omega)$ and $s_3(\omega)$ are known, and the recoveries are obtained by difference imaging.}\label{fig:exam2b}
\end{figure}

Our next example illustrates the case of different conductivities for each inclusion.
\begin{exam}\label{exam2c}
The setup of the example is identical with that of Example \ref{exam2}(i), except that the inclusions on the top
left, top right and bottom have conductivity perturbations of $1.5$, $1$ and $0.5$, respectively.
\end{exam}

The numerical results are presented in Fig. \ref{fig:exam2c}. With different conductivities for
each inclusion, the reconstructions remain fairly reasonable: all three inclusions are well
separated from each other, with their magnitudes accurately estimated, as in Example \ref{exam2}(i).
However, the support of the inclusion on the bottom is slightly distorted, cf. Fig. \ref{fig:exam2cd}.
This is attributed to the smaller magnitude of the inclusion, yielding a higher noise level of
the corresponding linear inversion step, which deteriorates the reconstruction.

\begin{figure}[hbt!]
  \centering
    \subfloat[true $\delta\sigma_k$s]{\label{fig:exam2ca}\includegraphics[trim = 1.5cm .5cm .5cm .5cm, clip=true,width = .23\textwidth]{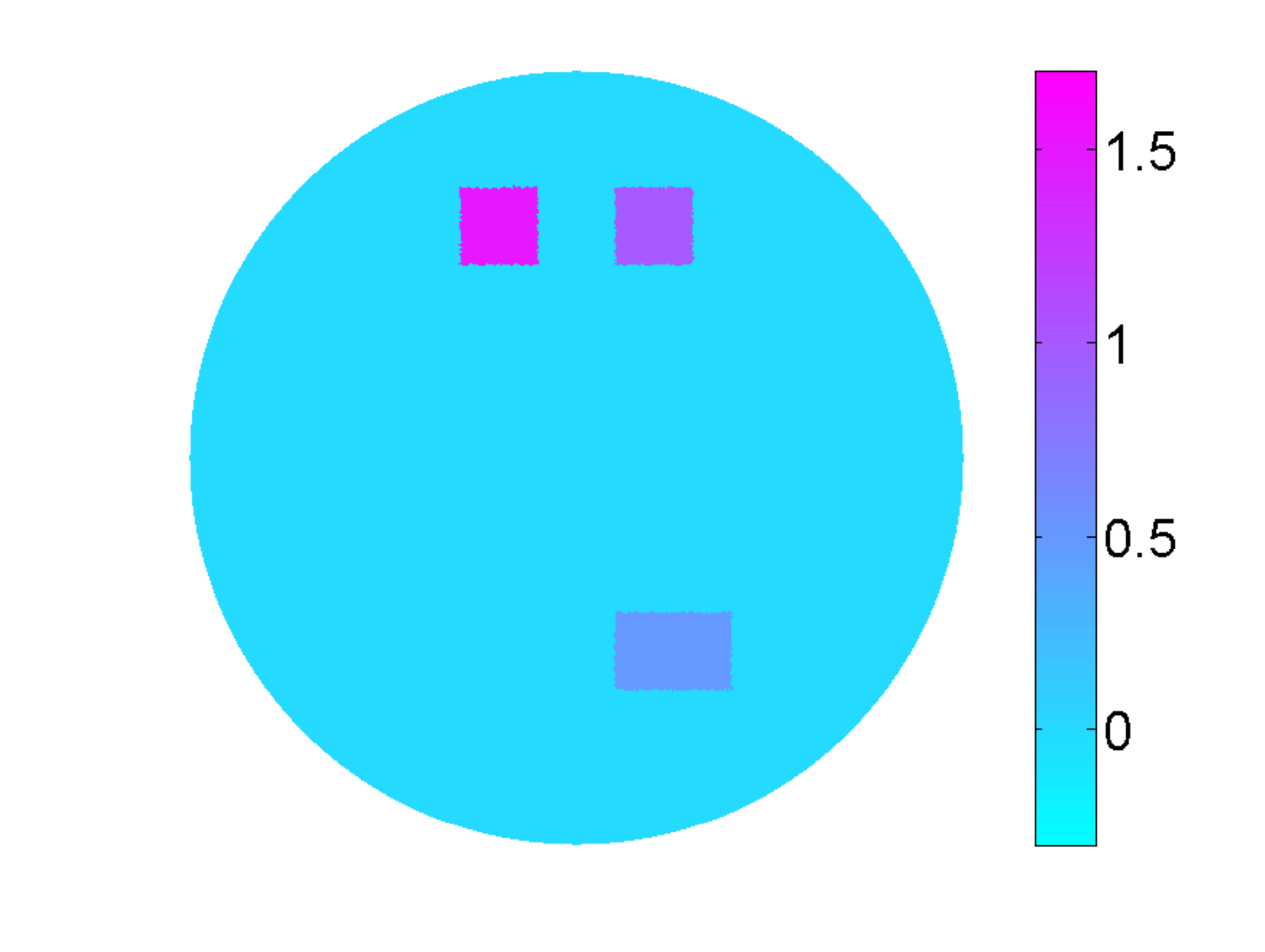}}\hfill
    \subfloat[recovered $\delta\sigma_1$]{\label{fig:exam2cb}\includegraphics[trim = 1.5cm .5cm .5cm .5cm, clip=true,width = .23\textwidth]{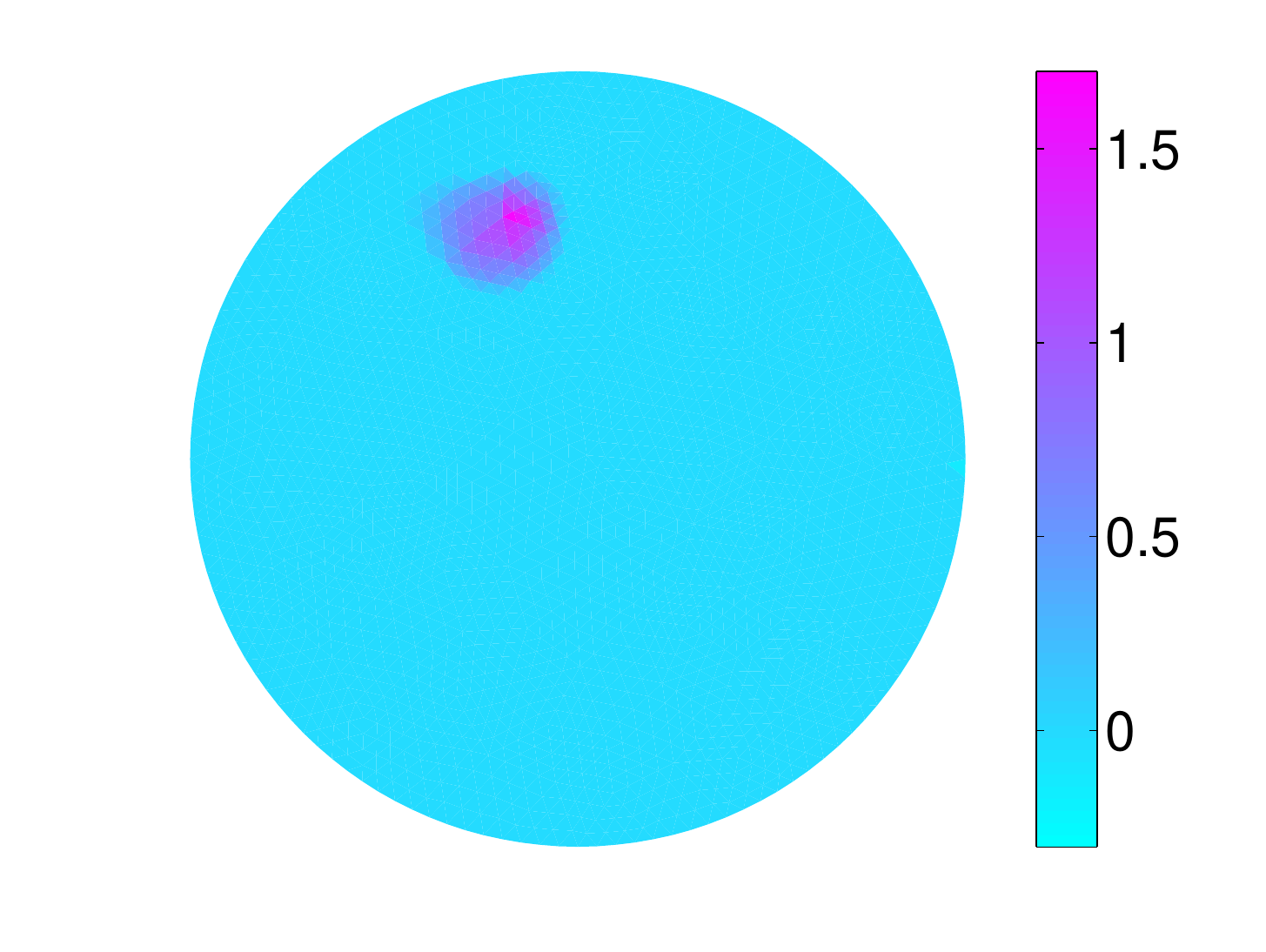}}\hfill
    \subfloat[recovered $\delta\sigma_2$]{\label{fig:exam2cc}\includegraphics[trim = 1.5cm .5cm .5cm .5cm, clip=true,width = .23\textwidth]{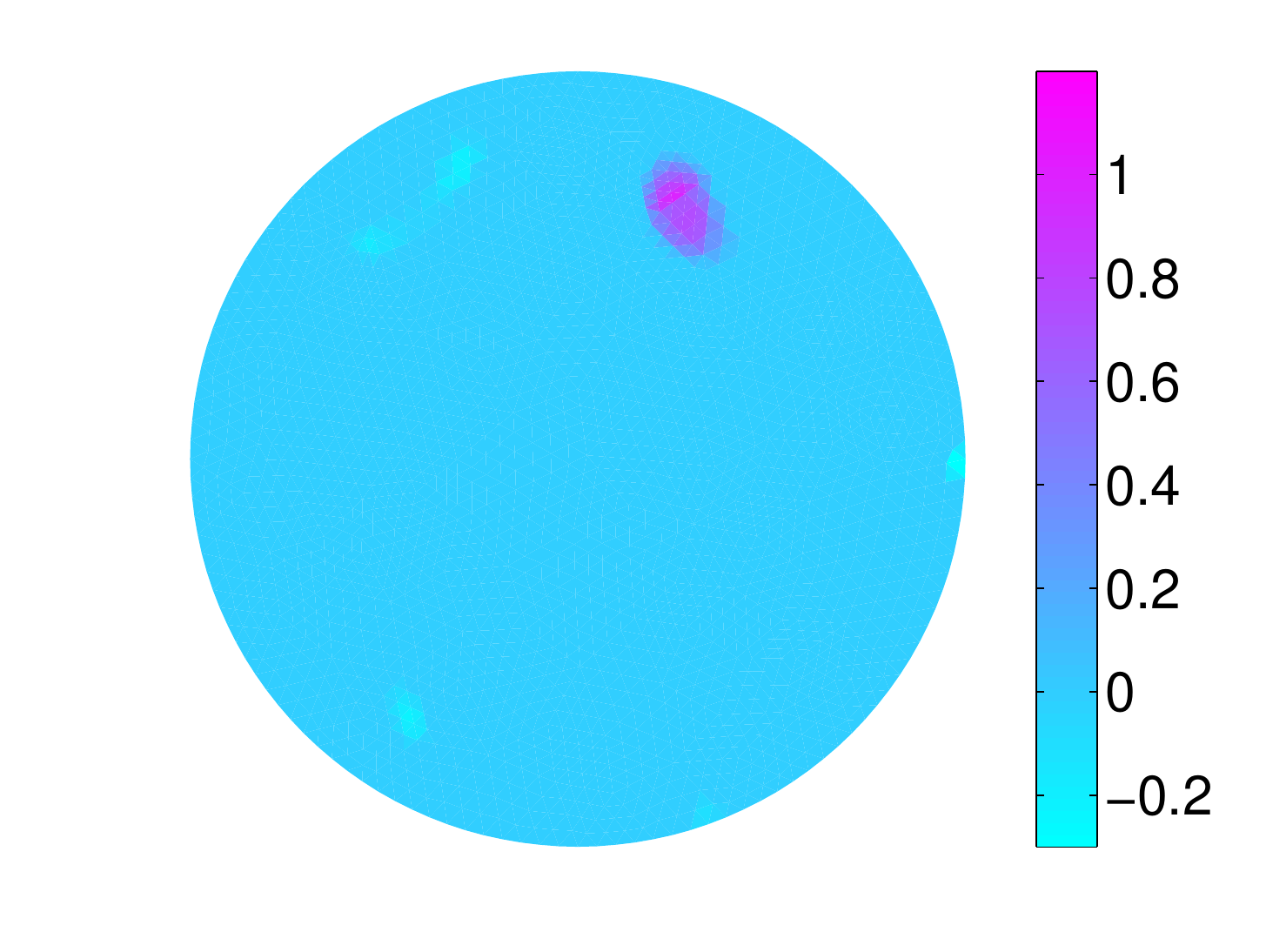}}\hfill
    \subfloat[recovered $\delta\sigma_3$]{\label{fig:exam2cd}\includegraphics[trim = 1.5cm .5cm .5cm .5cm, clip=true,width = .23\textwidth]{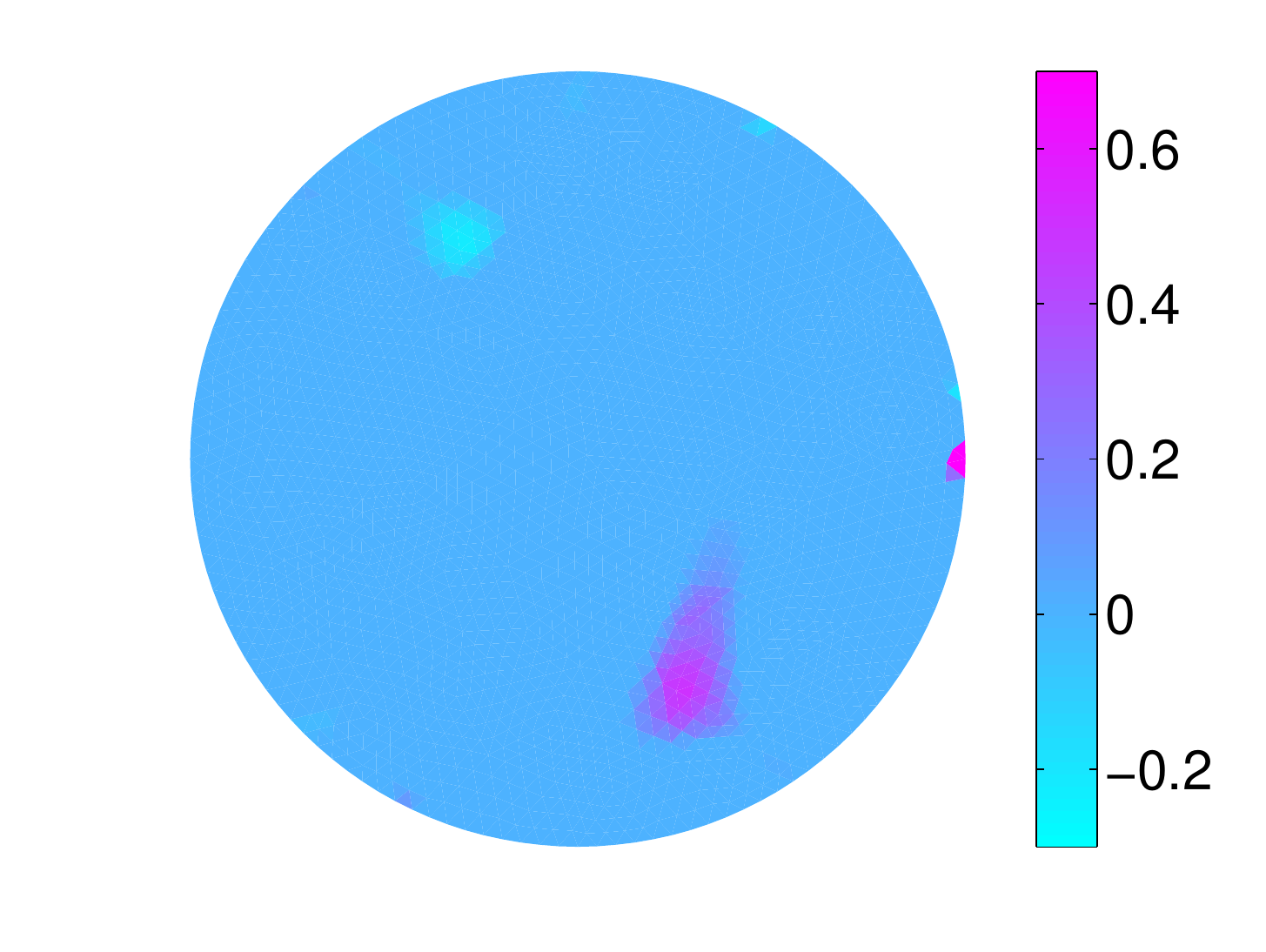}}
  \caption{Numerical results for Example~\ref{exam2c} with $1\%$ data noise, with fully known $s_k(\omega)$s. The recoveries
  are obtained by the direct approach.}\label{fig:exam2c}
\end{figure}

\subsection{Imperfectly Known Boundary}\label{sec:numer-imperf}

Now we illustrate the approach in the case of an imperfectly known boundary. In the first example, the unknown
true domain $\widetilde\Omega$ is an ellipse
centered at the origin with semi-axes $a$ and $b$, $\mathcal{E}_{a,b}=\{(x_1,x_2): x_1^2/a^2+x_2^2/b^2<1\}$, and
the computational domain $\Omega$ is taken to be the unit circle.
\begin{exam}\label{exam3}
Consider two square inclusions on the top and the bottom of the ellipse, with
$s_1(\omega)=0.2\omega+0.2$ and $s_2(\omega)=0.1\omega^2$ (Fig.~\ref{fig:exam3a}). We consider
the following two cases:
\begin{itemize}
  \item[(i)] The true domain $\widetilde\Omega$ is $\mathcal{E}_{a,b}$ with $a=1.1$ and $b=0.9$;
  \item[(ii)] The true domain $\widetilde\Omega$ is $\mathcal{E}_{a,b}$ with $a=1.2$ and $b=0.8$.
\end{itemize}
In either case, we take three frequencies, $\omega_1=0$, $\omega_2=0.5$ and $\omega_3=1$.
\end{exam}

\begin{figure}[hbt!]
  \centering
    \subfloat[true $\delta\sigma_k$s]{\includegraphics[width = .3\textwidth]{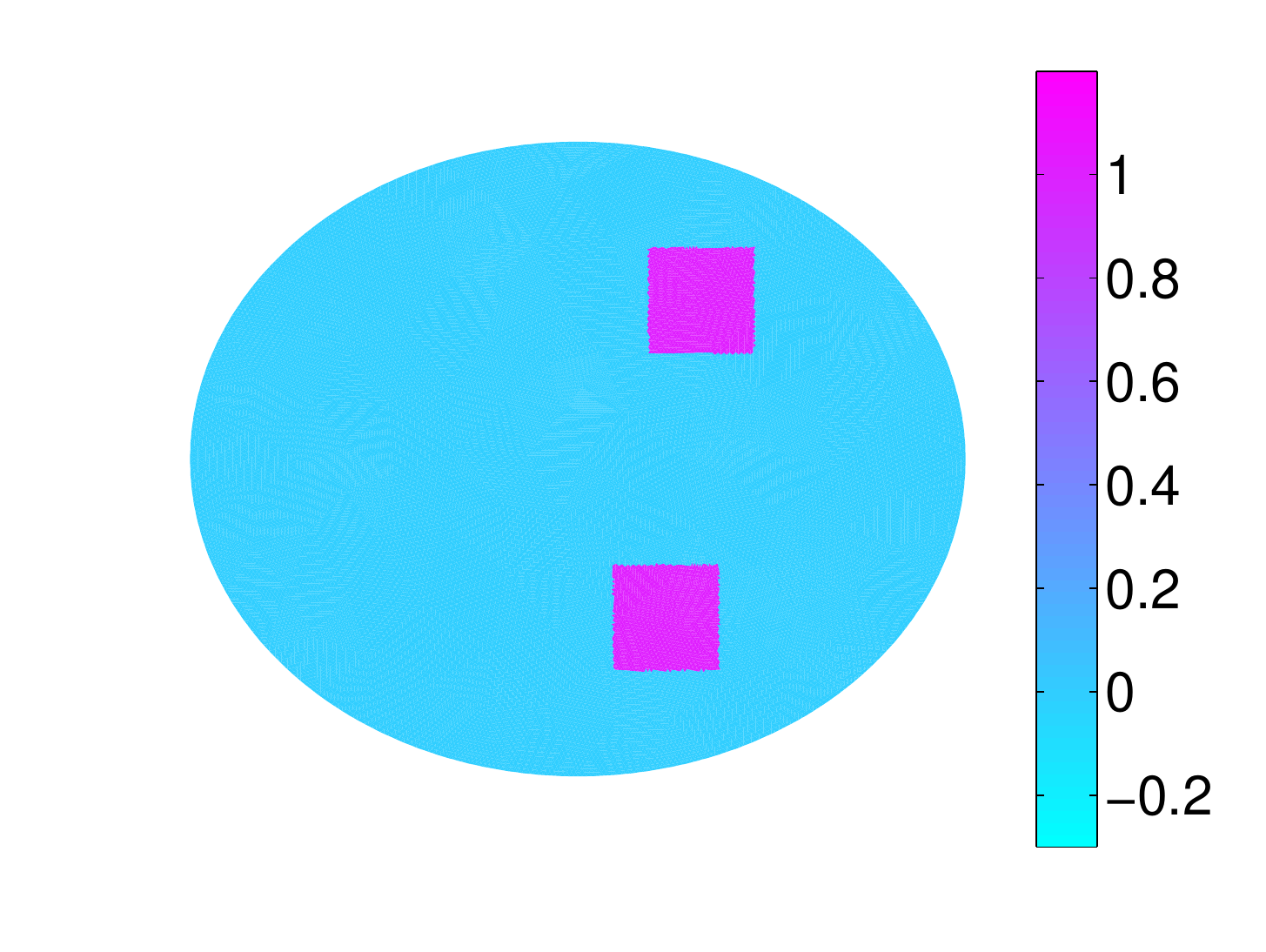}}\hfill
    \subfloat[recovered $\delta\sigma_1$]{\includegraphics[width = .3\textwidth]{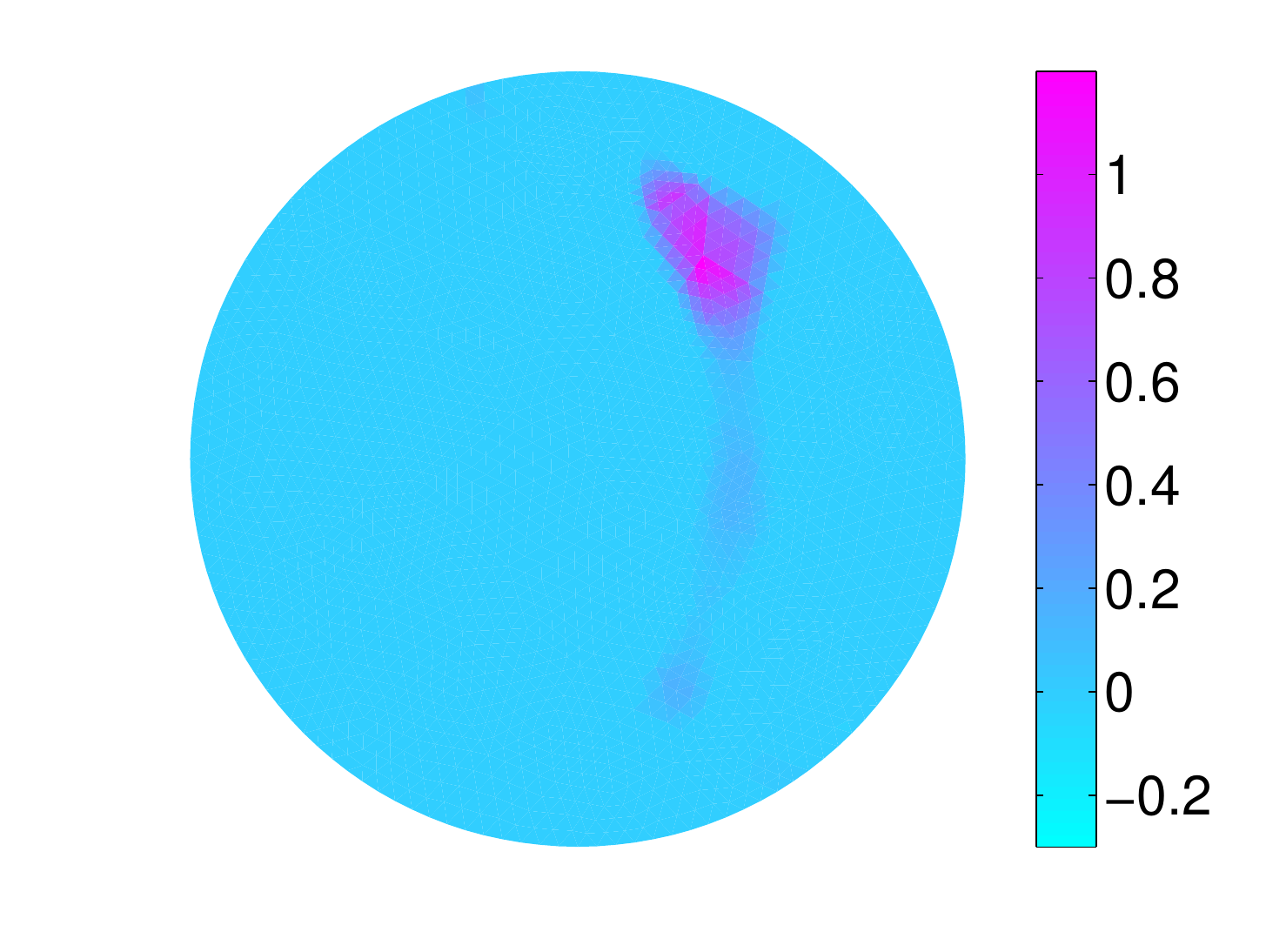}}\hfill
    \subfloat[recovered $\delta\sigma_2$]{\includegraphics[width = .3\textwidth]{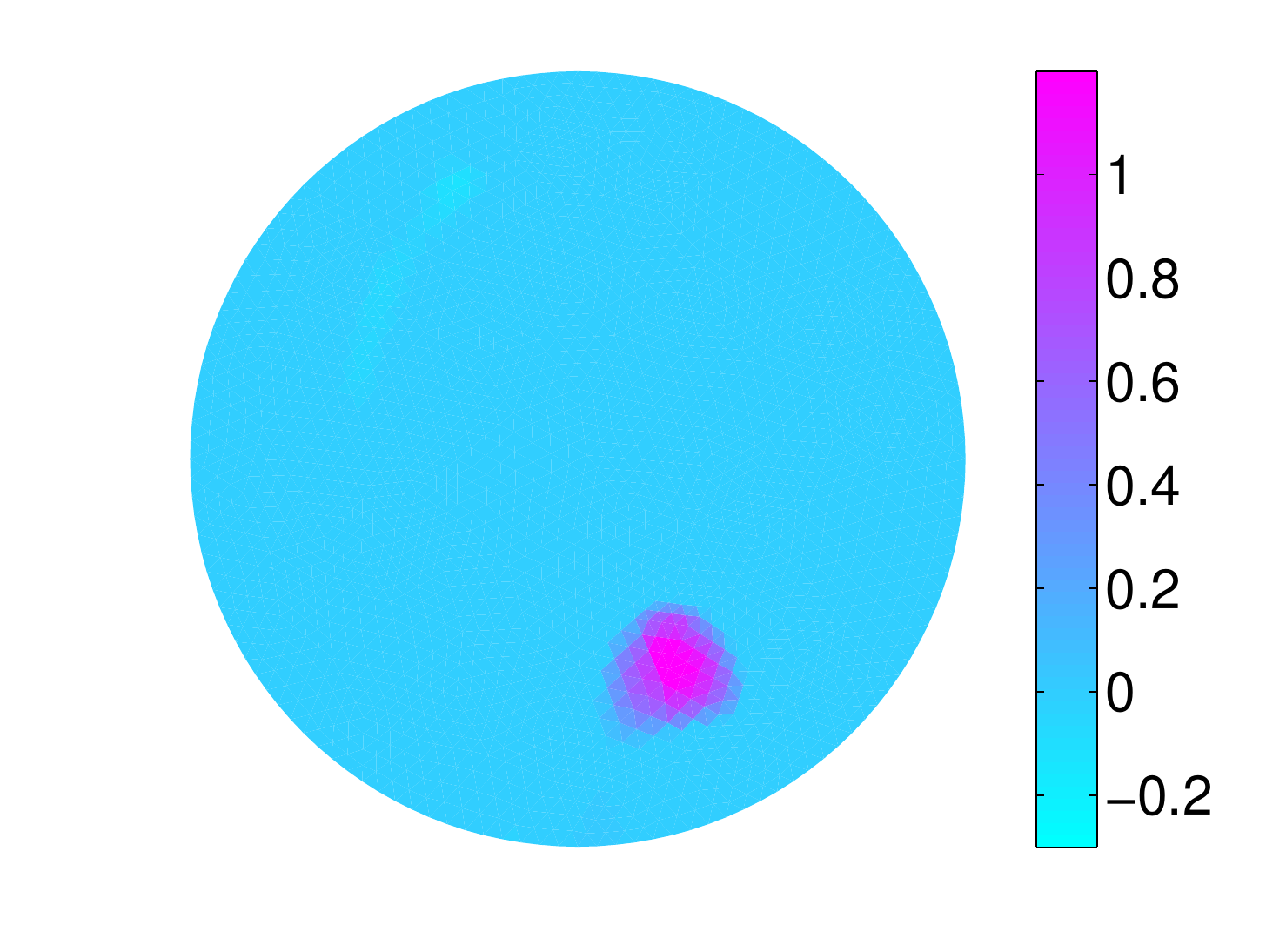}}
  \caption{Numerical results for Example~\ref{exam3}(i) with $0.1\%$ data noise, fully known
  $s_k(\omega)$. The recoveries are obtained using difference imaging.}\label{fig:exam3a}
\end{figure}

\begin{figure}[hbt!]
  \centering
   \subfloat[true $\delta\sigma_k$s]{\label{fig:exam3ba}\includegraphics[width = .3\textwidth]{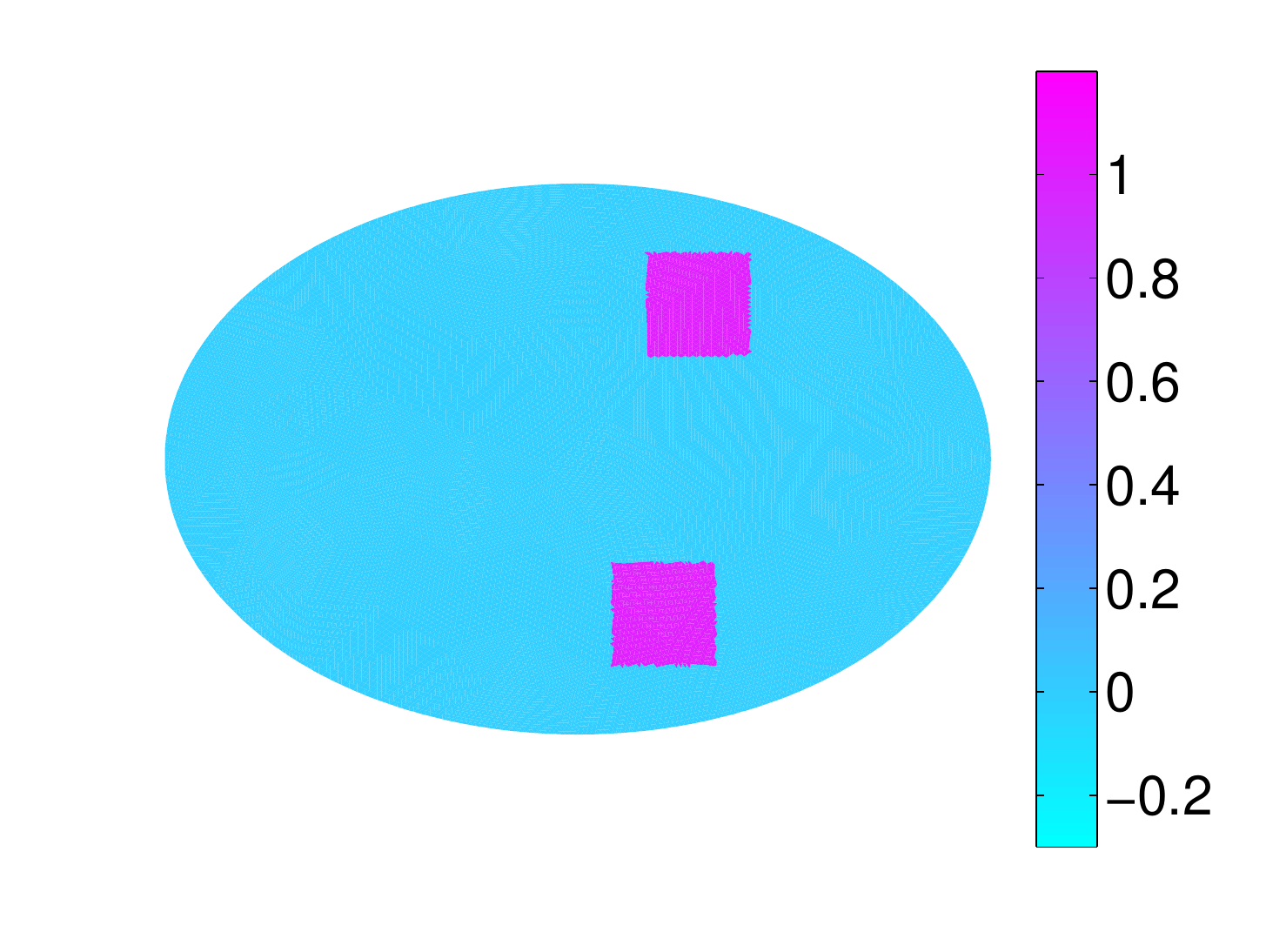}}\hfill
   \subfloat[recovered $\delta\sigma_1$]{\label{fig:exam3bb}\includegraphics[width = .3\textwidth]{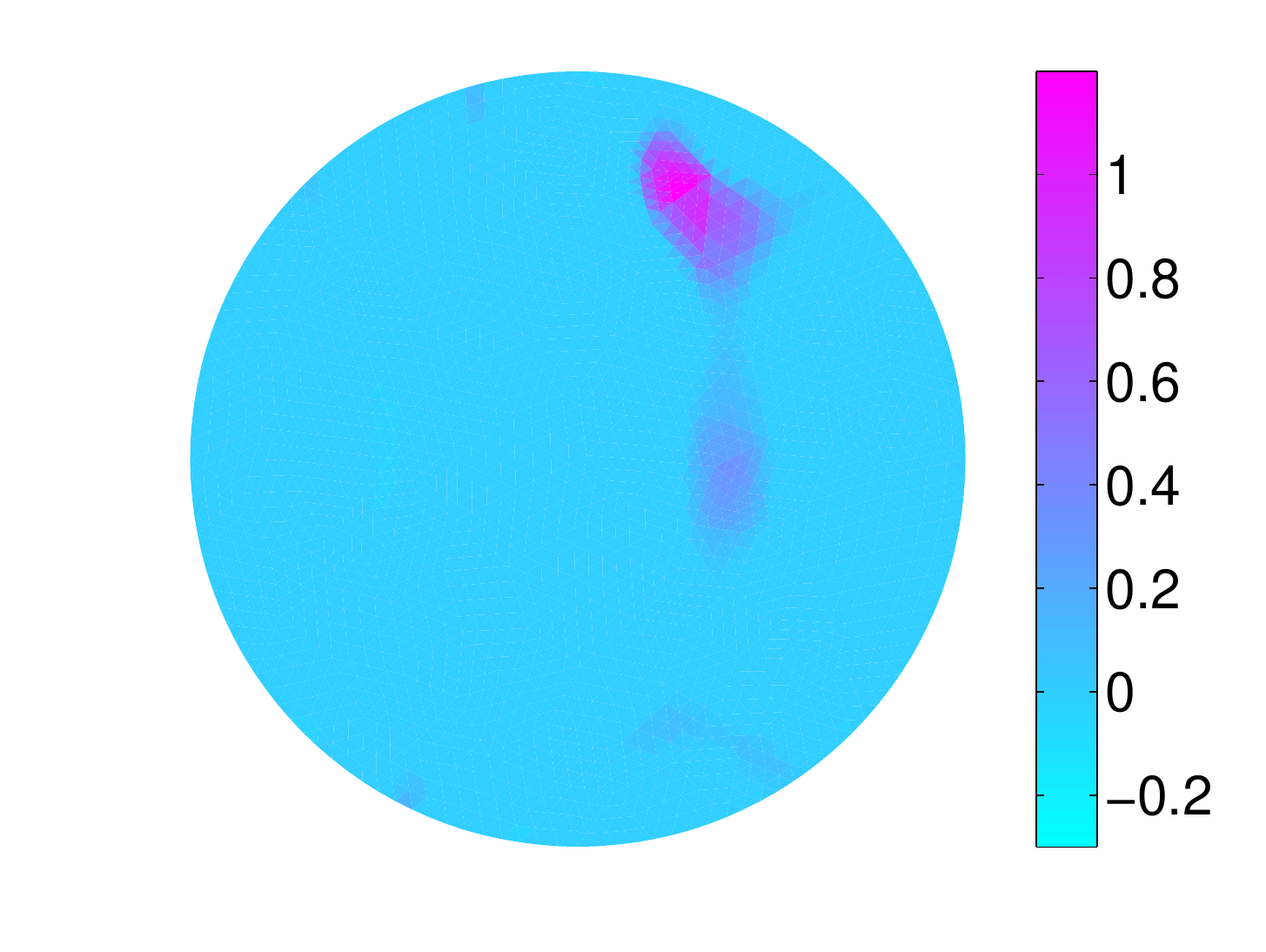}}\hfill
   \subfloat[recovered $\delta\sigma_2$]{\includegraphics[width = .3\textwidth]{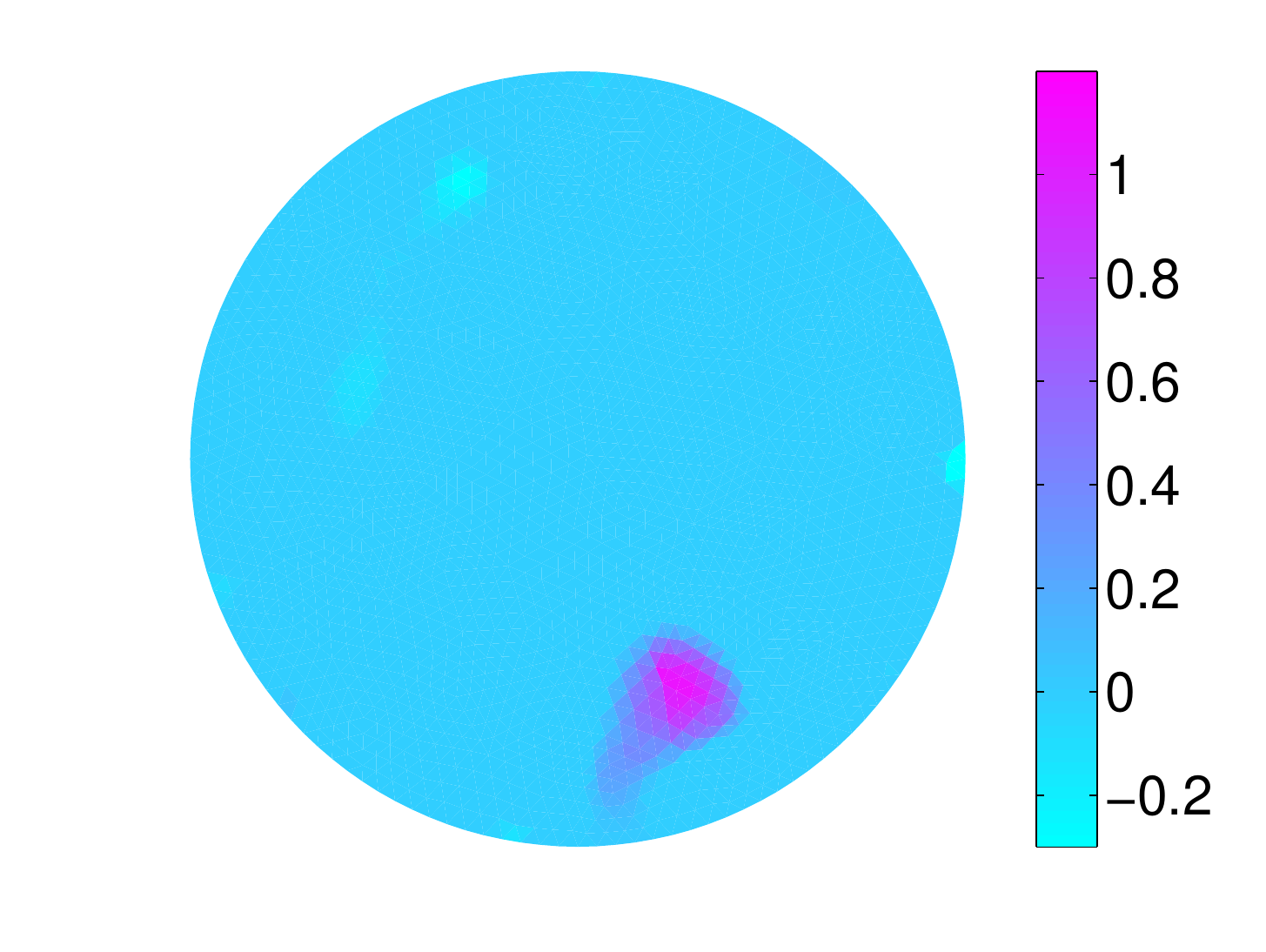}}

   \subfloat[recovered $\delta\sigma_0$]{\includegraphics[width = .3\textwidth]{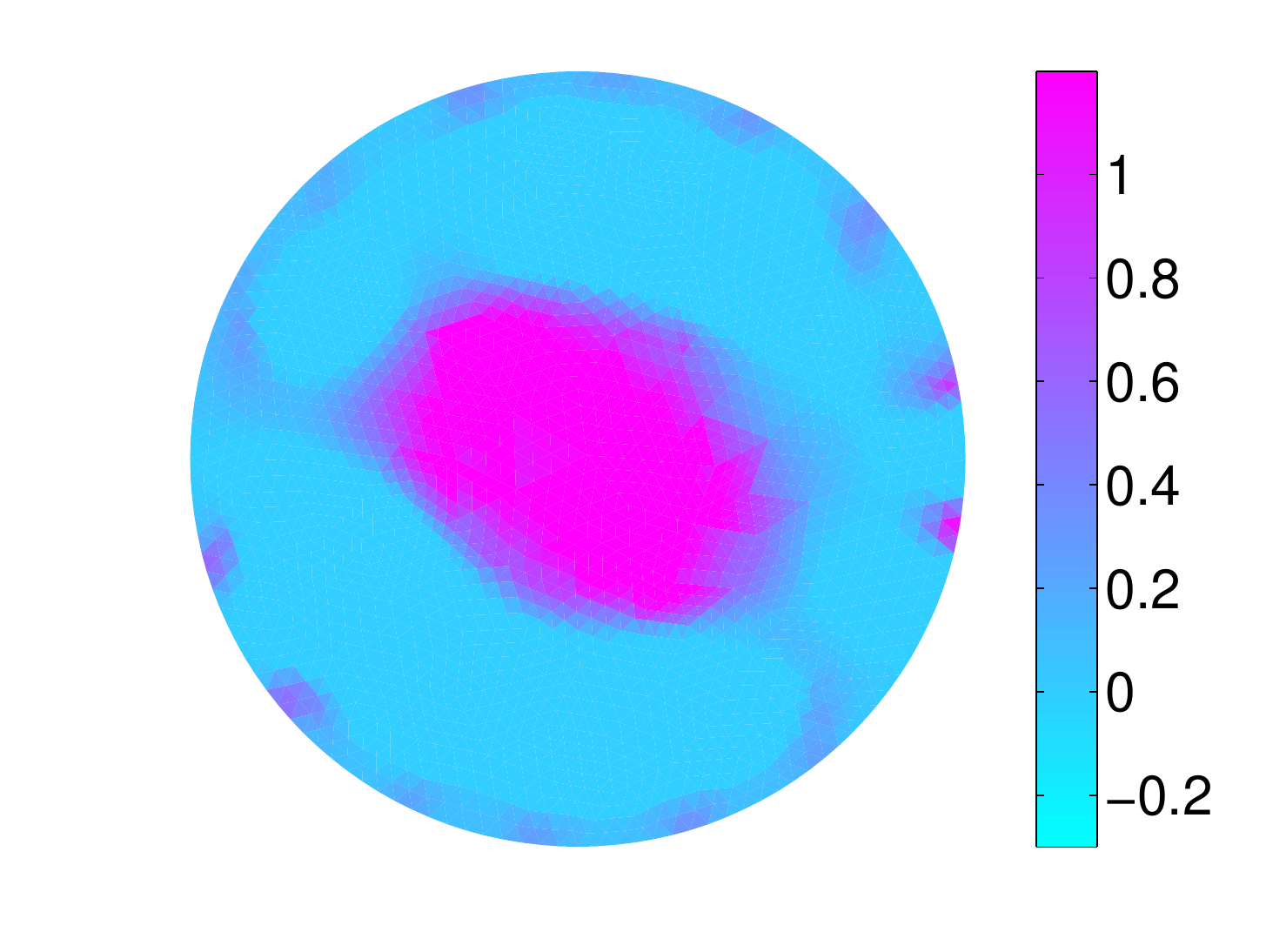}}\hfill
   \subfloat[recovered $\delta\sigma_1$]{\includegraphics[width = .3\textwidth]{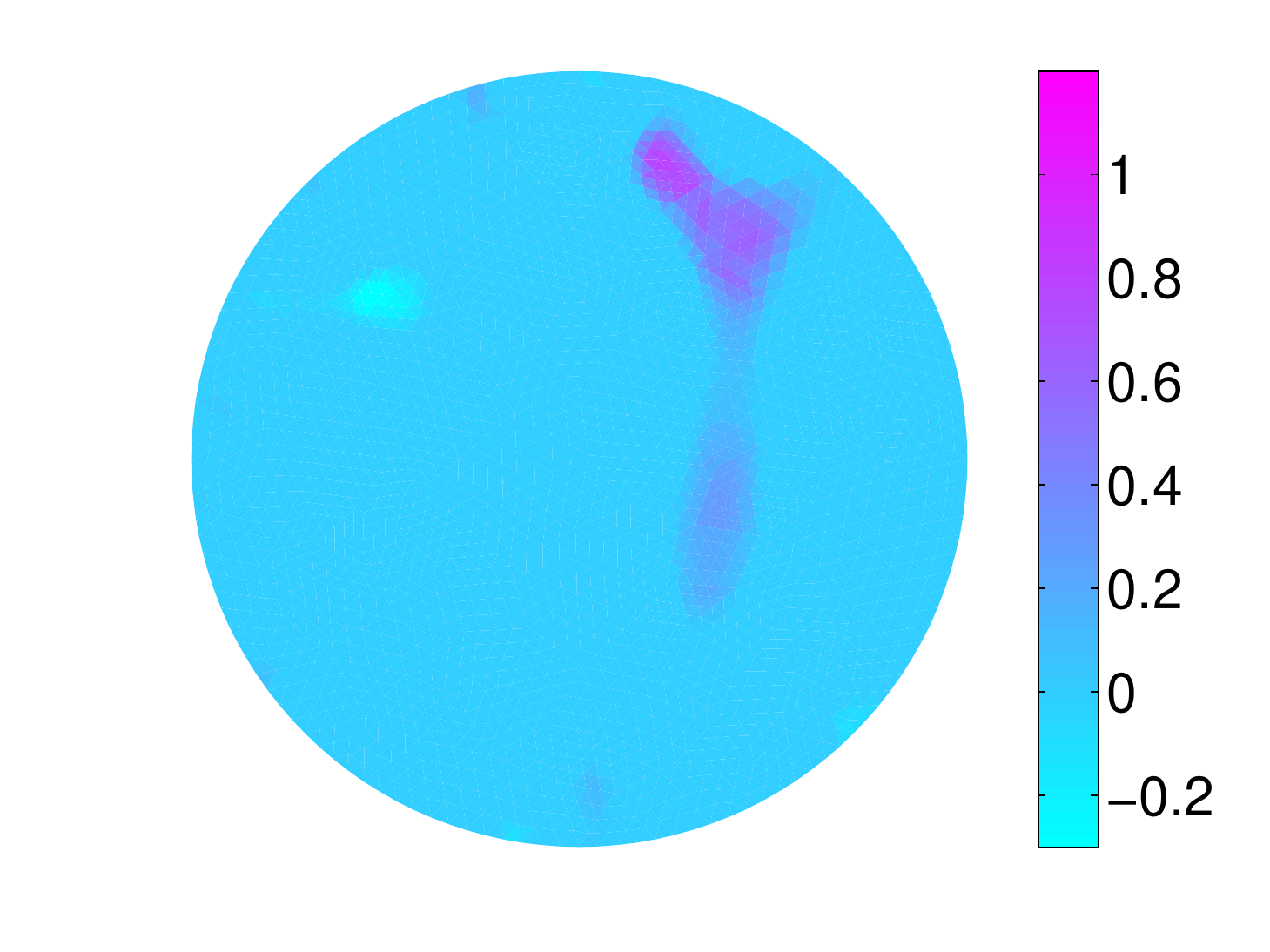}}\hfill
   \subfloat[recovered $\delta\sigma_2$]{\includegraphics[width = .3\textwidth]{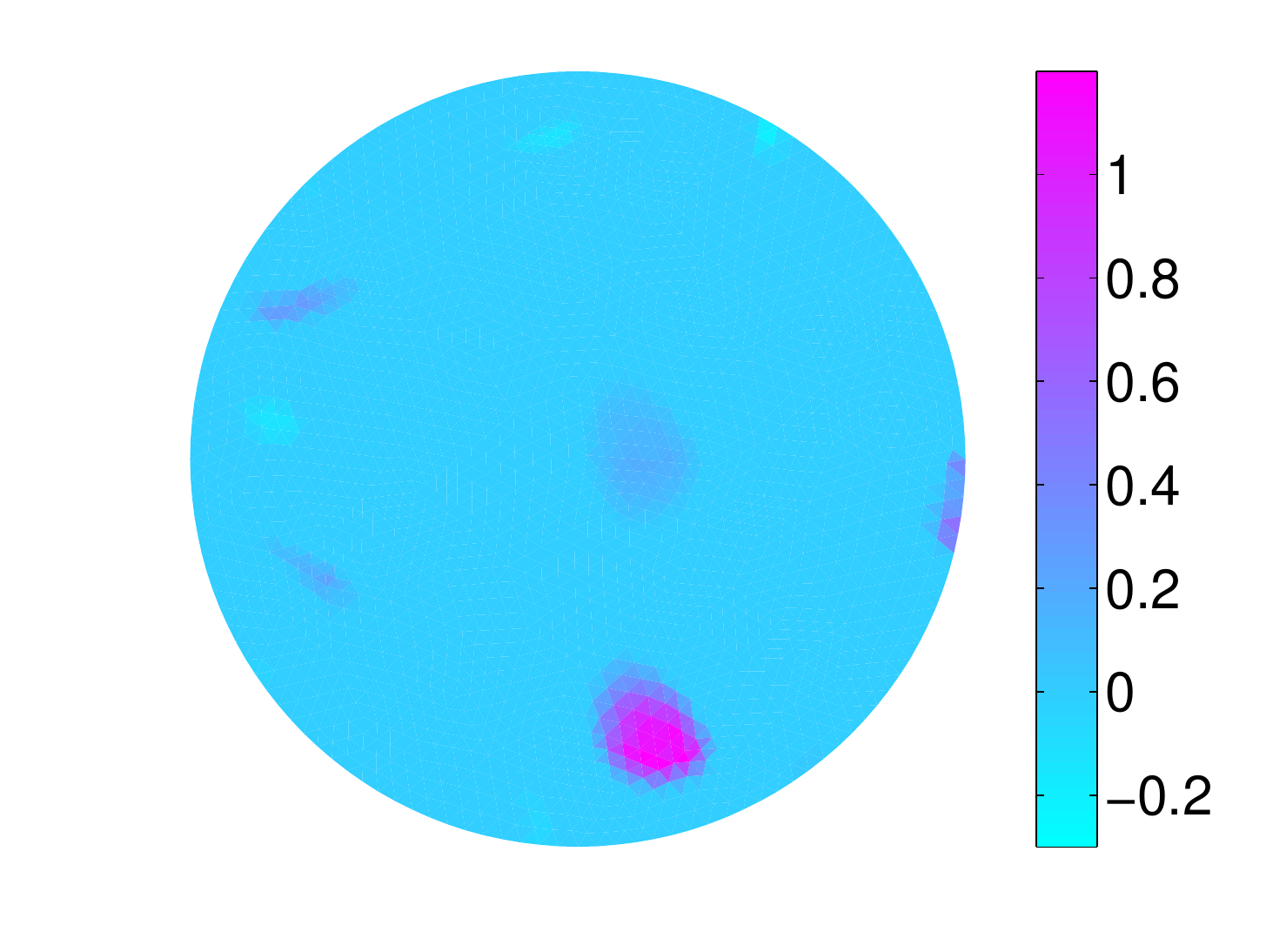}}

  \caption{Numerical results for Example~\ref{exam3}(ii) with $0.1\%$ data noise, fully known $s_k(\omega)$s. The recoveries
  in (b)-(c) are based on difference imaging,
  and those in (d)-(f) the direct approach. }\label{fig:exam3b}
\end{figure}

The results are given in Figs.~\ref{fig:exam3a} and \ref{fig:exam3b} with $0.1\%$ noise
in the data, for (i) and (ii), respectively. Although not presented, we note that
the static imaging can only produce useless recoveries, due to
the presence of significant modeling errors. Numerically one can verify that for both cases,
the contribution from domain deformation is much larger than that of the
inclusions, which justifies the smaller noise level $0.1\%$. By exploiting the spectral
incoherence, mfEIT can separate different contributions, and hence recover each inclusion accurately.

From Fig.~\ref{fig:exam3a}, difference imaging
can recover the inclusions accurately, and they are well separated, due to their
incoherent $s_k(\omega)$s. However, the shape and location of the recovery tend to
be slightly deformed. This concurs with the
analysis in  Section~\ref{sec:cem}: the unknown boundary induces deformed
conductivity of the inclusions, in addition to the anisotropic component.

In Fig.~\ref{fig:exam3b} we present  the results related to Example~\ref{exam3}(ii).
The preceding observations on difference imaging still hold, cf.\
Figs.~\ref{fig:exam3ba} and \ref{fig:exam3bb}.  The direct
approach works equally well: the recovered $\delta\sigma_1$
and $\delta\sigma_2$ are fairly accurate;
and the results are comparable with those obtained by difference imaging.
The recovered $\delta\sigma_0$ contains only the spurious conductivity induced
by the domain deformation. Should there be any true inclusion $\delta\sigma_0$ corresponding
to $s_0(\omega)$, it will be washed away by the error
$\epsilon\Psi$, cf. \eqref{eqn:lin-inv-cem-deformed-2}. The preceding discussions fully
confirm the analysis in  Section~\ref{sec:cem}: mfEIT is capable
of discriminating the perturbation due to domain deformation from the
inclusions by either the direct approach or
difference imaging.

Last we present one example where the electrodes are misplaced, but their lengthes
do not change, i.e., the factor $z$ in the boundary integral
can be set to the unit (see Example~\ref{ex:z=1}). This is a special case of the imperfectly known boundary case,
where the forward map $F$ maps the domain $\Omega$ onto itself. However, the forward
map is not the identity or a rotation operator, and thus it will induce an anisotropic
conductivity, especially in the regions near the boundary.
\begin{exam}\label{exam4}
The true domain $\widetilde\Omega$ is identical with the computational domain $\Omega$,
the unit circle, but every other electrode is shifted by an angle of $\pi/32$, while
the length of each electrode remains unchanged; see Fig.~\ref{fig:electrodeb} for a
schematic illustration. There are two rectangular inclusions, on the top and on the
bottom of the ellipse, with spectral profiles $s_1(\omega)=0.2\omega+0.2$ and $s_2(\omega)=0.1\omega^2$, respectively.
We take three frequencies $\omega_1=0$, $\omega_2=0.5$ and $\omega_3=1$.
\end{exam}

\begin{figure}[htb!]
  \subfloat[true $\delta\sigma_i$s]{\includegraphics[width=.3\textwidth]{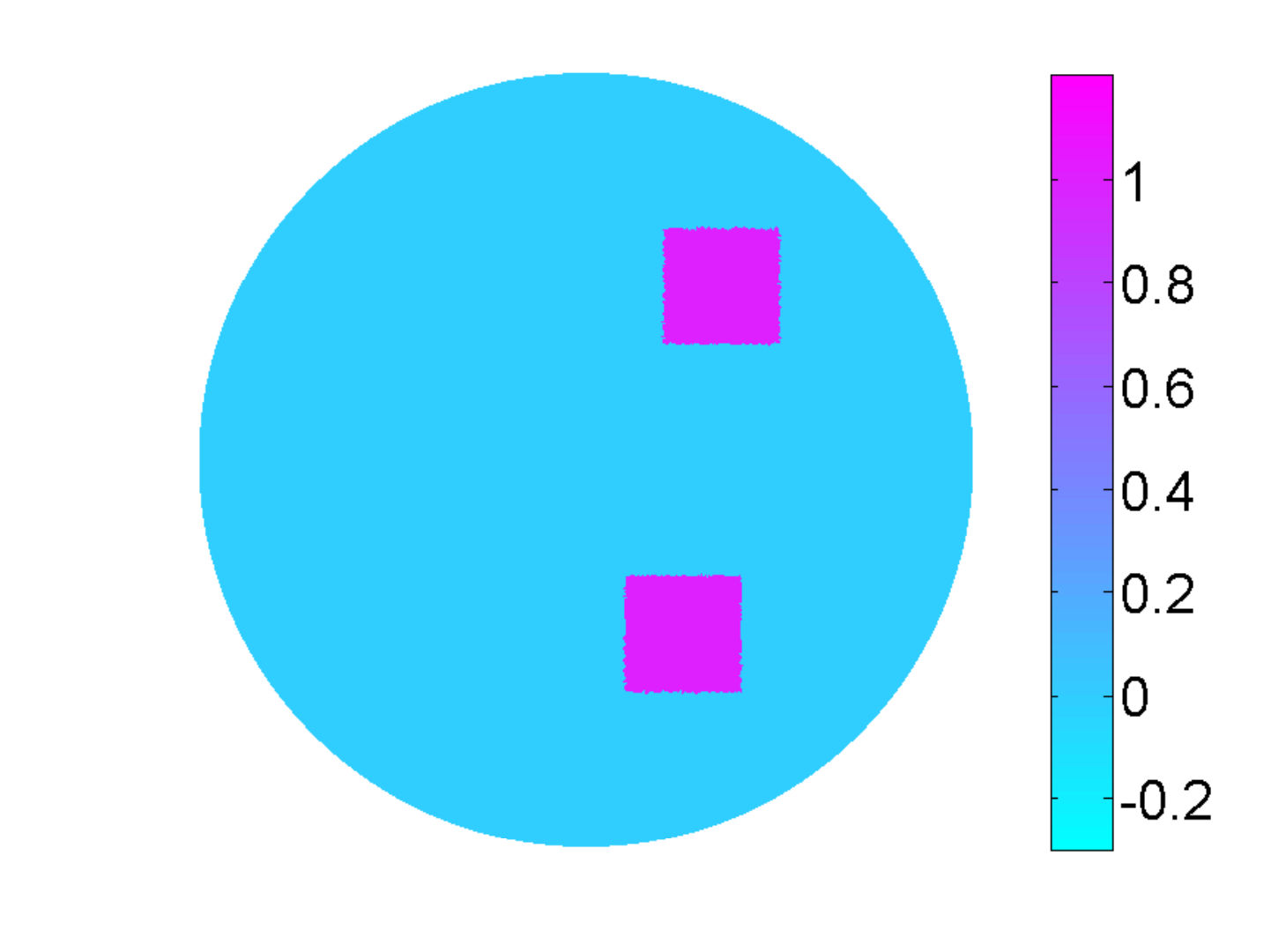}}\hfill
  \subfloat[recovered $\delta\sigma_1$]{\includegraphics[width=0.3\textwidth]{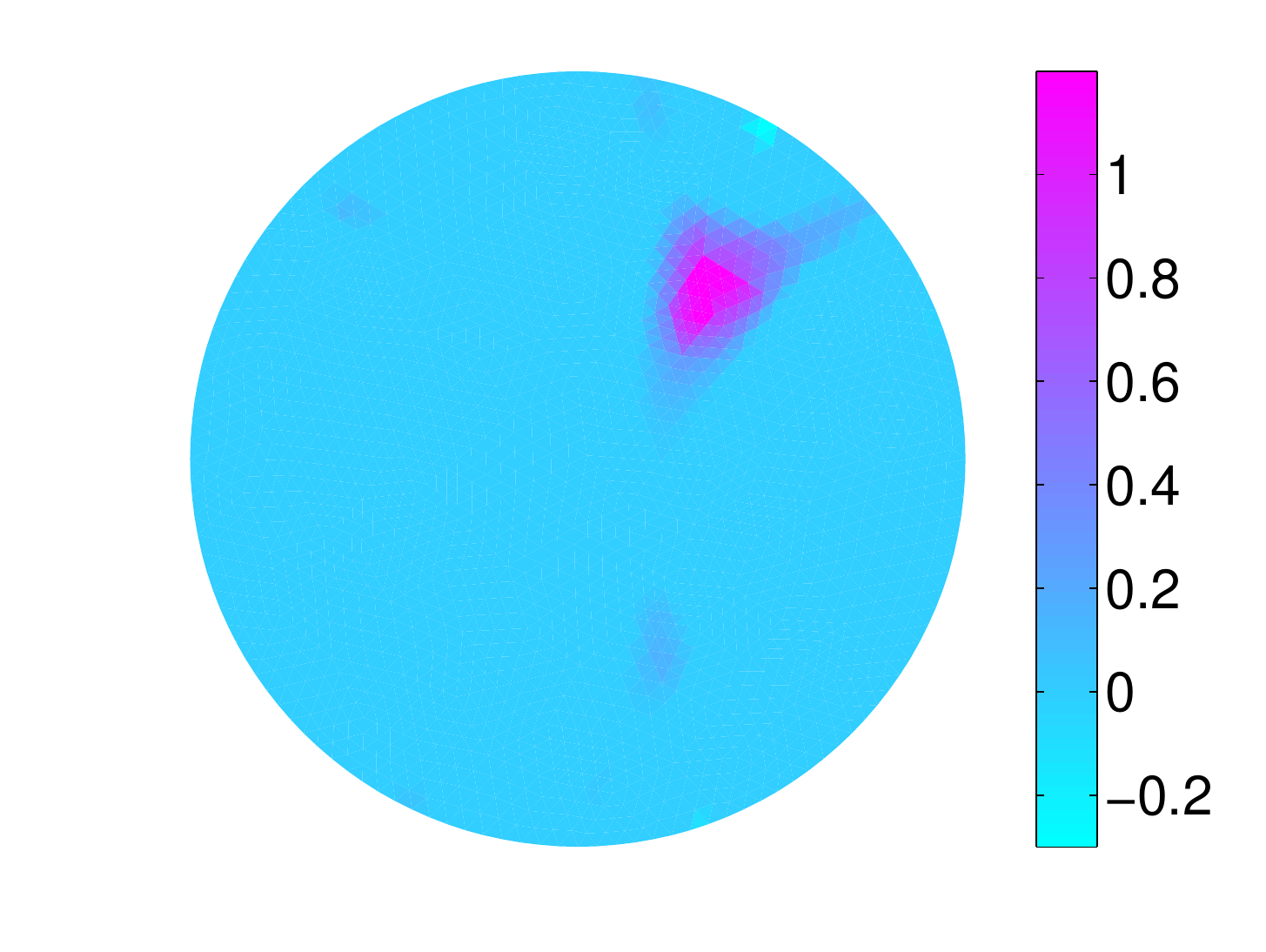}}\hfill
  \subfloat[recovered $\delta\sigma_2$]{\includegraphics[width=0.3\textwidth]{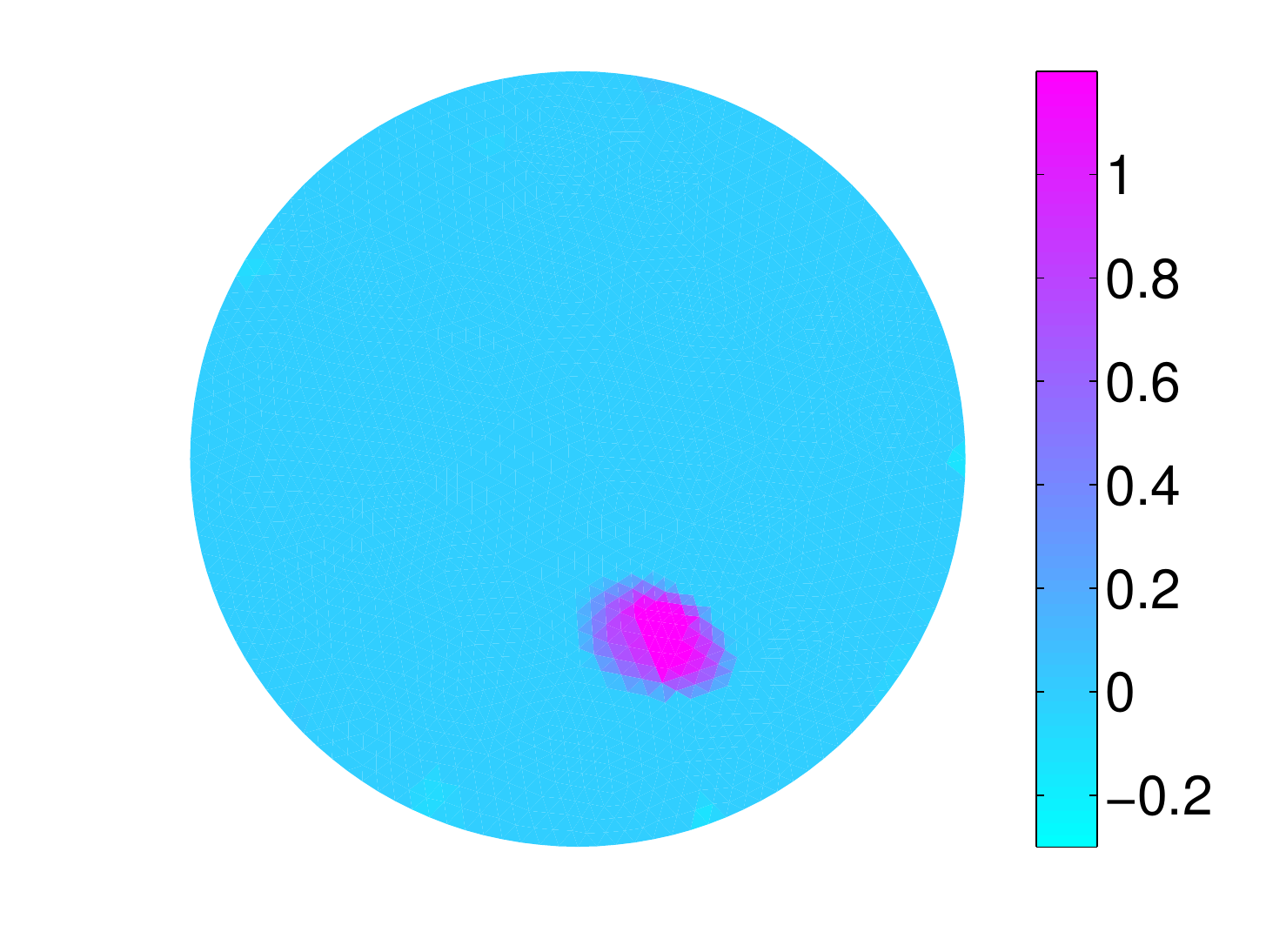}}

  \subfloat[recovered $\delta\sigma_0$]{\includegraphics[width=0.3\textwidth]{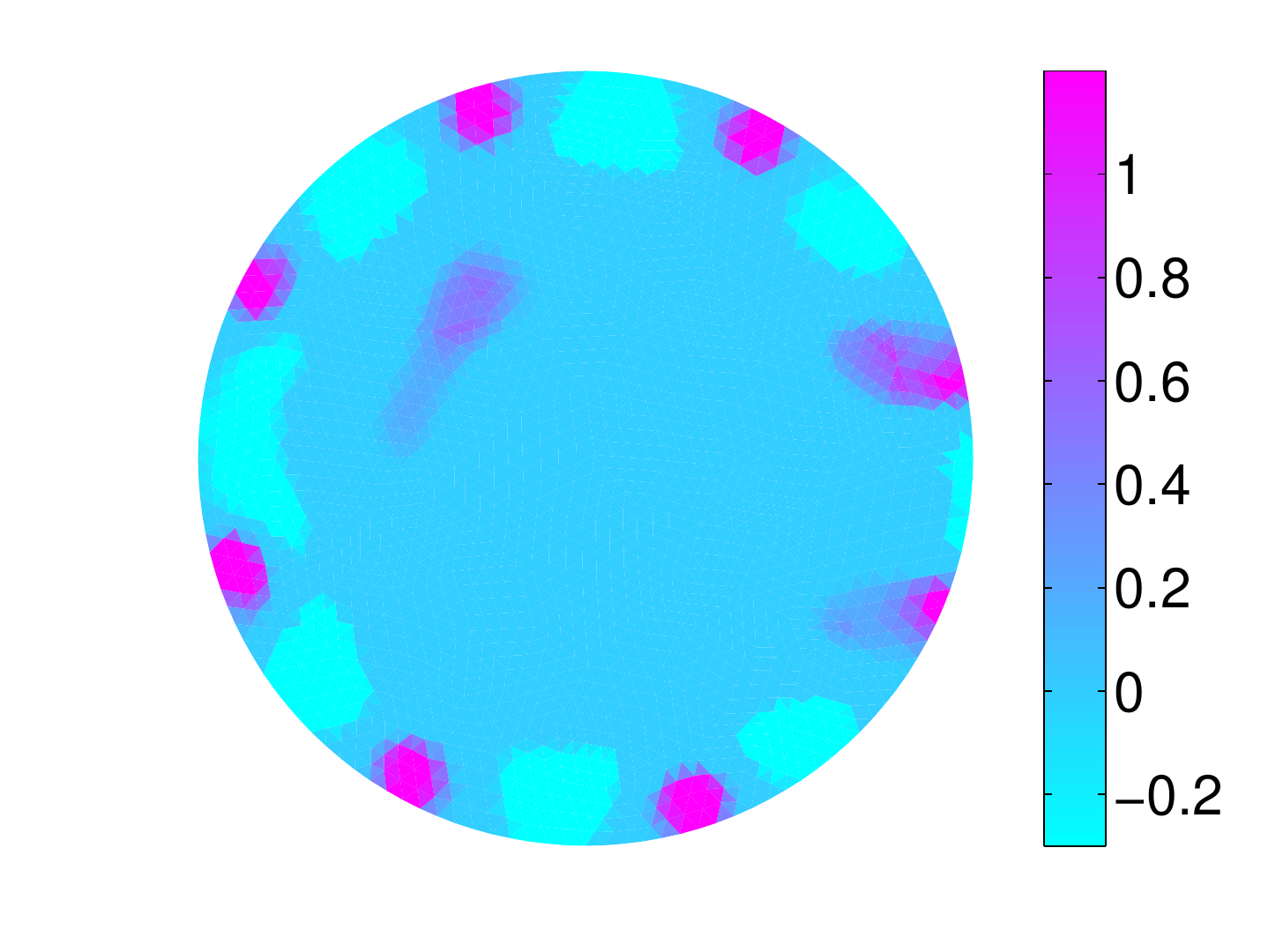}}\hfill
  \subfloat[recovered $\delta\sigma_1$]{\includegraphics[width=0.3\textwidth]{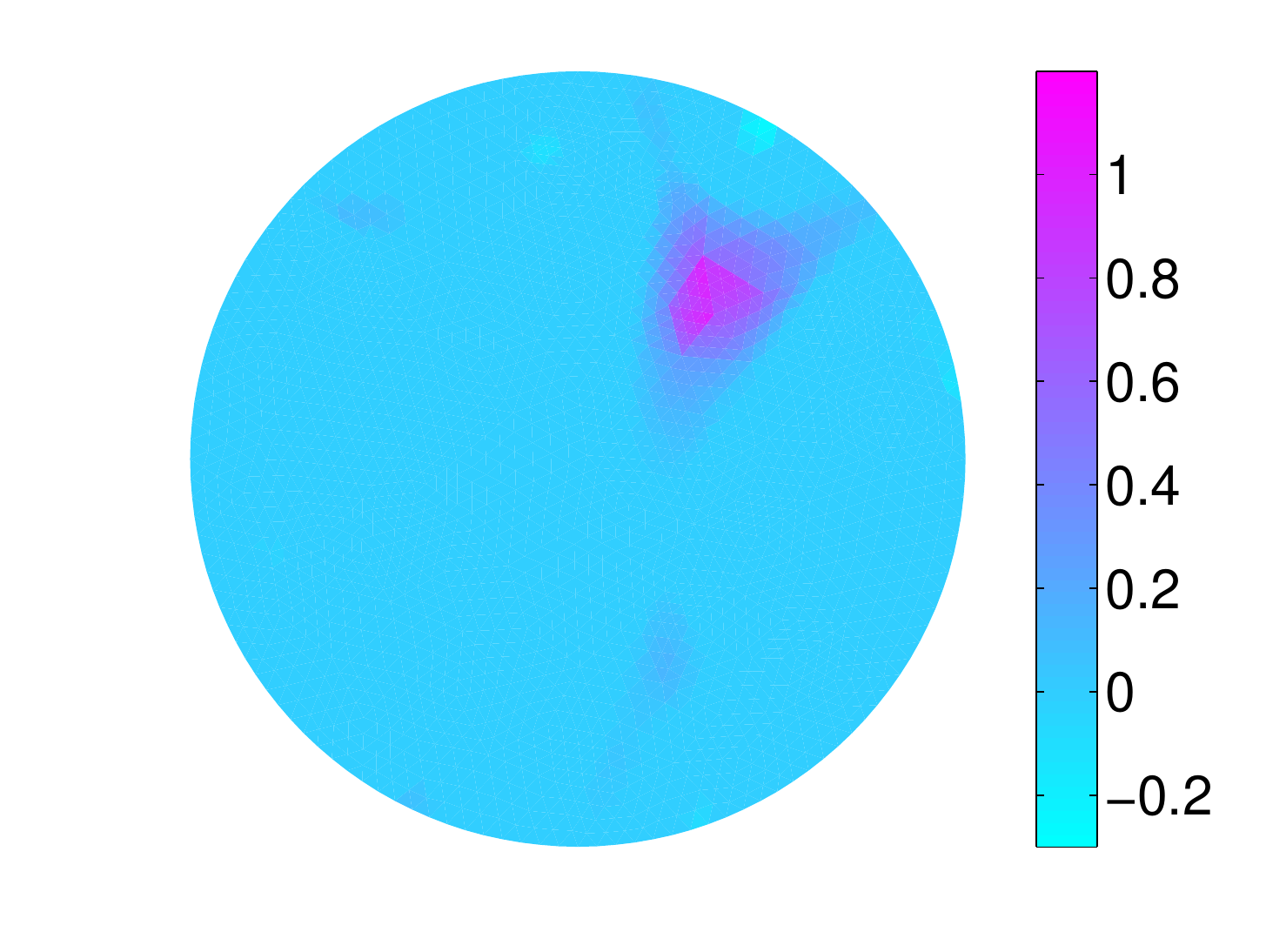}}\hfill
  \subfloat[recovered $\delta\sigma_2$]{\includegraphics[width=0.3\textwidth]{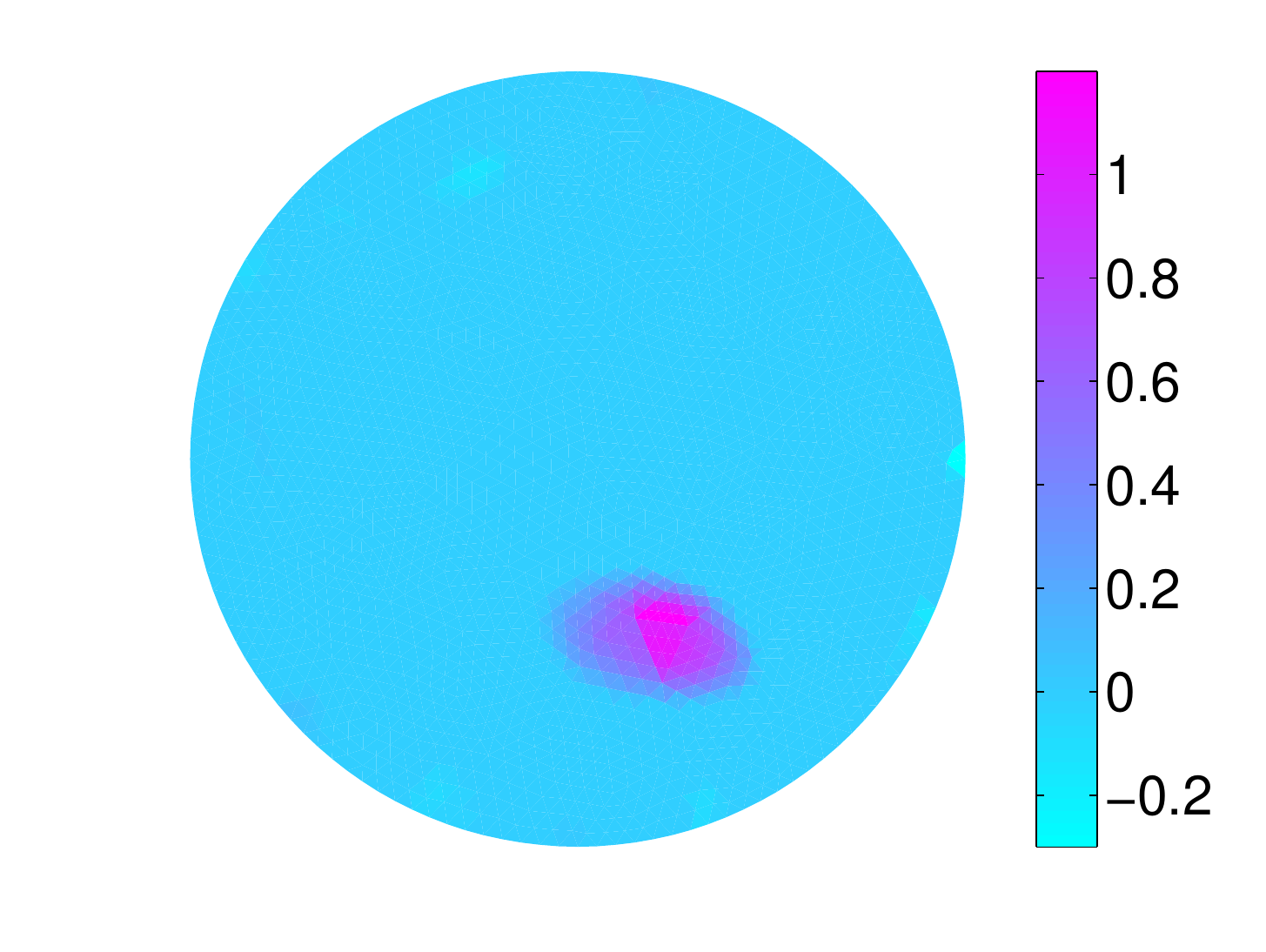}}

  \caption{Numerical results for Example~\ref{exam4} with $0.1\%$ data noise, fully known
  $s_k(\omega)$s. The recoveries in (b)-(c) are based on difference
  imaging, and those in (d)--(f) direct approach.}\label{fig:exam4}
\end{figure}

The results for Example~\ref{exam4} are given in Fig.~\ref{fig:exam4}. The
analysis in Section \ref{sub:cem-unknown} indicates that
the conductivity perturbation can be lumped to
$\delta\sigma_0$. The results confirm the analysis: when using the direct
approach, there are pronounced oscillations around
the boundary in the recovered $\delta\sigma_0$. However,
the recovered $\delta\sigma_1$ and $\delta\sigma_2$ are reasonable in both location
and size. The difference imaging can
also remove the contributions due to unknown electrode locations, since $s_k(\omega)$s are
incoherent both before and after differentiation.

In summary, as expected from the analysis of Sections~\ref{sec:unknownboundary} and \ref{sub:cem-unknown},
the mfEIT technique has significant potentials in handling modeling errors. The inclusion $\delta\sigma_0$
corresponding to $s_0$ may not be recovered. However, by mfEIT,
$\{\delta\sigma_k\}_{k=1}^K$ can be correctly recovered by either the
direct approach or difference imaging,
provided that $s_k$s or $s_k^\prime$s are sufficiently incoherent.

\section{Concluding Remarks}\label{sec:conclusion}
In this paper we have presented novel reconstruction methods in multifrequency EIT.
In particular, we have illustrated both analytically and numerically the significant
potentials of mfEIT in handling the modeling error due to an imperfectly known boundary shape.
We have also introduced a new and efficient group sparse reconstruction algorithm for the linearized EIT problem.
The techniques may be extended to quantitative photoacoustic imaging
from multispectral measurements \cite{daniel}.

This work represents only a first step towards the rigorous mathematical and numerical analysis of
mfEIT. There are a few questions deserving further research. For instance, beyond the linearized regime, the nonlinear
approach may be more appropriate, but it comes with  significant computational overhead, due to a large number
of PDEs involved. It is imperative to develop fast image reconstruction algorithms and to provide theoretical justifications.
Moreover, in this work we have mainly focused on the
recovery of the abundances. It would be of great interest to derive
sufficient conditions for the simultaneous recovery of partial spectral profiles, under suitable structural
prior knowledge, e.g., the (disjoint) sparsity of abundances. It is expected that this issue may have different features in the nonlinear regime.

\section*{Acknowledgements}
The authors are grateful to the anonymous referees and the associate editor Otmar Scherzer
for their constructive comments, which have led to an improved presentation of the work.
This work was partially supported by the ERC Advanced Grant Project
MULTIMOD-267184 and the EPSRC grant EP/M025160/1.

\bibliographystyle{abbrv}

\bibliography{eit}

\end{document}